\documentclass[a4paper,11pt]{amsart}
\usepackage{amsfonts,stmaryrd,amssymb,amsmath,a4wide,verbatim}
\usepackage[british]{babel}
\usepackage{graphics}
\usepackage{epsfig,graphicx}
\usepackage[active]{srcltx}

\textheight 8.5in
\textwidth 5.7in
\makeatletter
\@addtoreset{equation}{section}\makeatother

\newcommand{\scal}[2]{\left\langle #1 , #2\right\rangle}
\newcommand{\scalpoly}[2]{\left( #1 , #2\right)_{{\S}^n}}
\newcommand{\scalfonc}[2]{\left( #1 , #2\right)}
\newcommand{\normdpoly}[1]{\left\| #1\right\|_{\S^n}}
\newcommand{\normdfonc}[1]{\| #1\|_2}
\newcommand{\normdappli}[1]{\interleave #1\interleave}

\def\tr{{\rm tr}\,}

\newtheorem{theorem}{Theorem}[section]
\newtheorem{lemma}[theorem]{Lemma}
\newtheorem{proposition}[theorem]{Proposition}
\newtheorem{remark}[theorem]{Remark}

\def\R{\mathbb{R}}
\def\N{\mathbb{N}}
\def\S{\mathbb{S}}
\def\insm{\displaystyle\int_{M}}
\def\inssn{\int_{\S^n}}
\def\vol{dv}
\def\voleps{dv_{g_{\varepsilon}}}
\def\Vol{{\rm Vol}\,}
\def\xb{\overline{X}}

\def\la{\lambda_1}
\def\can{\text{can\ }}

\def\muks{\mu_k}

\def\hkm{{\mathcal H}^k(M)}
\def\hkr{{\mathcal H}^k(\R^{n+1})}
\def\coef{\fieps'^2+(1\pm\fieps)^2}
\def\coeff{\varphi'^2+(1+\varphi)^2}
\def\fieps{\varphi_{\varepsilon}}

\def\fiteps{\tilde{\varphi}_{\varepsilon}}
\def\mpeps{M_{\varepsilon}^+}
\def\mmeps{M_{\varepsilon}^-}
\def\meps{M_{\varepsilon}}

\def\ieps{I_{\varepsilon}}
\def\xeps{X_{\varepsilon}}
\def\Heps{H_{\varepsilon}}
\def\heps{h^{\pm}_{\varepsilon}}
\def\beps{B_{\varepsilon}}
\def\hepsi{h^{\pm}_{1,\varepsilon}}
\def\hepsd{h^{\pm}_{2,\varepsilon}}
\def\hepst{h^{\pm}_{3,\varepsilon}}

\begin{document}
\title[]{Hypersurfaces with small extrinsic radius or large $\lambda_1$ in Euclidean spaces}

\subjclass[2000]{53A07, 53C21}

\keywords{Mean curvature, Reilly inequality, Laplacian, Spectrum, pinching results, hypersurfaces}

\author[E. AUBRY, J.-F. GROSJEAN, J. ROTH]{Erwann AUBRY, Jean-Fran\c cois GROSJEAN, Julien ROTH}

\address[E. Aubry]{LJAD, Universit\'e de Nice Sophia-Antipolis, CNRS; 28 avenue Valrose, 06108 Nice, France}
\email{eaubry@unice.fr}

\address[J.-F. Grosjean]{Institut \'Elie Cartan (Math\'ematiques), Universit\'e Henri Poinca\-r\'e Nancy I, B.P. 239, F-54506 Vand\oe uvre-les-Nancy cedex, France}
\email{grosjean@iecn.u-nancy.fr}

\address[J. Roth]{LAMA, Universit\'e Paris-Est - Marne-la-Vall\'ee, 5 bd Descartes, Cit\'e Descartes, Champs-sur-Marne, F-77454 Marne-la-Vall\'ee }
\email{Julien.Roth@univ-mlv.fr}

\date{\today}

\begin{abstract} We prove that hypersurfaces of $\R^{n+1}$ which are almost extremal for the Reilly inequality on $\lambda_1$ and have $L^p$-bounded mean curvature ($p>n$) are Hausdorff close to a sphere, have almost constant mean curvature and have a spectrum which asymptotically contains the spectrum of the sphere. We prove the same result for the Hasanis-Koutroufiotis inequality on extrinsic radius. We also prove that when a supplementary $L^q$ bound on the second fundamental is assumed, the almost extremal manifolds are Lipschitz close to a sphere when $q>n$, but not necessarily diffeomorphic to a sphere when $q\leqslant n$.
\end{abstract}

\maketitle
%\tableofcontents

\section{Introduction}
Sphere theorems in positive Ricci curvature are now a classical matter of study. The canonical sphere $(\S^n,can)$ is the only manifold with ${\rm Ric}\geqslant n{-}1$ which is extremal for the volume, the radius, the first non zero eigenvalue $\lambda_1$ on functions or the diameter. Moreover, it was proved in \cite{Co1,Co2,Ch-Co} that manifolds with ${\rm Ric}\geqslant n{-}1$ and volume close to $\Vol(\S^n,can)$ are diffeomorphic and Gromov-Hausdorff close to the sphere. This stability result was extended in \cite{Pe,Au}, where it is proved that manifolds with ${\rm Ric}\geqslant n-1$ have almost extremal volume if and only if they have almost extremal radius, if and only if they have almost extremal $\lambda_n$. Almost extremal diameter and almost extremal $\lambda_1$ are also equivalent when ${\rm Ric}\geqslant n{-}1$ (\cite{Cr,Il}), but, as shown in \cite{An,Ot}, it does not force the manifold to be diffeomorphic nor Gromov-Hausdorff close to $(\S^n,can)$.
In this paper, we study the stability of three optimal geometric inequalities involving the mean curvature of Euclidean hypersurfaces, and whose equality case characterizes the Euclidean spheres (see Inequalities \eqref{prim}, \eqref{rext} and \ref{lambda} below). More precisely we study the metric and spectral properties of the hypersurfaces which almost realize the equality case. It completes the results of \cite{colgros,roth}.

Let $X:(M^n,g)\to\R^{n+1}$ be a closed, connected, isometrically immersed $n$-manifold ($n\geqslant 2)$. The first geometric inequality we are interested in is the following
\begin{equation}\label{prim}
\|H\|_2 \|X-\xb\|_2\geqslant 1 
\end{equation}
where $\xb:=\frac{1}{\Vol M}\displaystyle\int_MX\vol$, $\Vol M$ is the volume of $(M^n,g)$, $H$ is the mean curvature of the immersion $X$ and $\|\cdot \|_p$ is the renormalized $L^p$-norm on $C^{\infty}(M)$ defined by $\|f\|_p^p=\frac{1}{\Vol M}\insm |f|^p\vol$.
Equality holds in \eqref{prim} if and only if $X(M)$ is a sphere of radius $\frac{1}{\|H\|_2}$ and center $\xb$ (see section \ref{prel}).
From \eqref{prim} we easily infer the Hasanis-Koutroufiotis inequality on extrinsic radius (i.e. the least radius of the balls of $\R^{n+1}$ which contains $X(M)$)
\begin{equation}\label{rext}
\|H\|_2 R_{ext}\geqslant1
\end{equation}
whose equality case also characterizes the sphere of radius $\frac{1}{\|H\|_2}$ and center $\overline{X}$.
The last inequality is the well-known Reilly inequality
\begin{equation}\label{lambda}
\la\leqslant n\|H\|_2^2
\end{equation}
Here also, the extremal hypersurfaces  are the spheres of radius  $\frac{1}{\|H\|_2}=\sqrt{\frac{n}{\lambda_1}}$. 
Let $p>2$ and $\varepsilon\in(0,1)$ be some reals. We will say that $M$ is almost extremal for Inequality \eqref{prim} when it satisfies the pinching
$$\displaylines{
(P_{p,\varepsilon})\hfill\|H\|_p\bigl\|X-\xb\bigr\|_2\leqslant 1+\varepsilon,\hfill
}$$
We will say that $M$ is almost extremal for Inequality \eqref{rext} when it satisfies the pinching
$$\displaylines{(R_{p,\varepsilon})\hfill \|H\|_p R_{ext}\leqslant1+\varepsilon\hfill} $$
We will say that $M$ is almost extremal for Inequality \eqref{lambda} when it satisfies the pinching
$$\displaylines{(\Lambda_{p,\varepsilon})\hfill(1+\varepsilon)\lambda_1\geqslant n\|H\|^2_p\hfill}$$
\begin{remark}
  It derives from the proof of the three above geometric inequalities, given in section \ref{prel}, that Pinching $(R_{p,\varepsilon})$ or Pinching $(\Lambda_{p,\varepsilon})$ imply Pinching $(P_{p,\varepsilon})$. For that reason, Theorems \ref{1}, \ref{maintheo}, \ref{2} below are stated for hypersuraces satisfying Pinching $(P_{p,\varepsilon})$ but are obviously valid for Pinching $(R_{p,\varepsilon})$ or Pinching $(\Lambda_{p,\varepsilon})$.
\end{remark}

Our first result is that, when $\|H\|_q$ is bounded, almost extremal manifolds for one of the three Inequalities \eqref{prim}, \eqref{rext} or \eqref{lambda} are Hausdorff close to an Euclidean sphere of radius $\frac{1}{\|H\|_2}$ and have almost constant mean curvature.

\begin{theorem}\label{1} Let $q>n$, $p>2$ and $A>0$ be some reals. There exist some po\-sitive functions $C=C(p,q,n,A)$ and $\alpha=\alpha(q,n)$ such that if $M$ satisfies $(P_{p,\varepsilon})$ and $\Vol M\|H\|_q^n\leqslant A$, then we have
\begin{align}\label{estiray}\Bigl\|\bigl|X-\xb\bigr|-\frac{1}{\|H\|_2}\Bigr\|_\infty\leqslant C \varepsilon^{\alpha}\frac{1}{\|H\|_2},\end{align}
and there exist some positive functions $C=C(p,q,r,n,A)$ and $\beta=\frac{\alpha(q-r)}{r(q-1)}$ so that
\begin{align}\label{pinchcm}\bigl\||H|-\|H\|_2\bigr\|^{~}_r\leqslant C\varepsilon^\beta\|H\|_2\ \ \ \ \ \ \ \mbox{for any }r\in[1,q).\end{align}
We assume moreover that $q>\max(4,n)$. For any \ $r>0$\  and\  $\eta>0$, there exists\  $\varepsilon_0=\varepsilon_0(p,q,n,A,r,\eta)>0$ such that if $M$ satisfies $(P_{p,\varepsilon})$ (for $\varepsilon\leqslant\varepsilon_0$) and $\Vol M\|H\|_q^n\leqslant A$ then for any $x\in S=\overline{X}+\frac{1}{\|H\|_2}\cdot\S^n$, we have
\begin{align}\label{passepartout}\Bigl|\frac{\Vol\bigl(B(x,\frac{r}{\|H\|_2})\cap X( M)\bigr)}{\Vol M}-\frac{\Vol\bigl(B(x,\frac{r}{\|H\|_2})\cap S\bigr)}{\Vol S}\Bigr|\leqslant \eta\frac{\Vol\bigl(B(x,\frac{r}{\|H\|_2})\cap S\bigr)}{\Vol S}\end{align}
where $B(x,r)$ is the Euclidean ball with center $x$ and radius $r$.
\end{theorem}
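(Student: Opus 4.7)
Normalise by rescaling so that $\|H\|_2 = 1$, and write $c = 1/\|H\|_2$ and $u = X - \xb$; the mean-curvature vector $\vec H$ satisfies $\Delta X = -n\vec H$. The plan is to derive all three estimates from the observation that the pinching $(P_{p,\varepsilon})$ forces the Cauchy--Schwarz inequality
$$\Bigl(\int_M u\cdot \vec H\Bigr)^2 \leq \Bigl(\int_M |u|^2\Bigr)\Bigl(\int_M |\vec H|^2\Bigr)$$
to be almost an equality. Integrating $\Delta X = -n\vec H$ against $u$ yields $\int_M u\cdot\vec H = \Vol M$, hence $\|u\|_2\|H\|_2\geq 1$; combined with $\|H\|_p\geq \|H\|_2$ (valid for $p\geq 2$), $(P_{p,\varepsilon})$ gives $\|u\|_2\|H\|_2\leq 1+\varepsilon$. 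Expanding $\int |u - c^2\vec H|^2$ then produces
$$\|u - c^2\vec H\|_2 \leq C\sqrt\varepsilon\, c.\qquad(\ast)$$

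Starting from $(\ast)$, pass to the $L^\infty$-closeness of $|u|$ to $c$ in two substeps. First, since $\vec H$ is normal to $M$, $(\ast)$ controls the tangential component $u^T = \tfrac12\nabla^M|u|^2$, so $\|\nabla^M|u|^2\|_2 \leq C\sqrt\varepsilon\, c$. The hypothesis $\Vol M\|H\|_q^n\leq A$ with $q>n$, via the Michael--Simon--Sobolev inequality, provides a Poincar\'e/Sobolev inequality on $M$ with constants depending only on $n,q,A$ (as in the framework of \cite{colgros,roth}); together with $\overline{|u|^2}\in[c^2,(1+\varepsilon)^2 c^2]$, one deduces $\bigl\||u|^2 - c^2\bigr\|_2\leq C\sqrt\varepsilon\, c^2$. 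Second, to upgrade to $L^\infty$, combine $\Delta|u|^2 = 2n(1 - u\cdot\vec H)$ with $(\ast)$ to obtain the elliptic equation
$$\Delta(|u|^2 - c^2) + \frac{2n}{c^2}(|u|^2 - c^2) = -2n\,u\cdot(\vec H - c^{-2} u),$$
whose right-hand side has small $L^2$-norm. A Moser iteration driven by Michael--Simon--Sobolev then propagates this $L^2$-smallness to $L^\infty$, yielding $\bigl\||u|^2 - c^2\bigr\|_\infty\leq C\varepsilon^\alpha\, c^2$ and hence \eqref{estiray}. The main technical obstacle is precisely this iteration: one must carefully track how the $L^2$-exponent $1/2$ degrades to the $L^\infty$-exponent $\alpha = \alpha(q,n)>0$ coming from the Sobolev embedding.

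Given \eqref{estiray}, the mean-curvature estimate \eqref{pinchcm} follows by interpolation. Taking magnitudes in $(\ast)$ and combining with $\bigl\||u| - c\bigr\|_\infty\leq C\varepsilon^\alpha c$ yields, via the triangle inequality, $\bigl\||H| - c^{-1}\bigr\|_1 \leq \bigl\||H|-c^{-1}\bigr\|_2 \leq C\varepsilon^\alpha/c$, while $\bigl\||H|-c^{-1}\bigr\|_q$ is uniformly bounded by hypothesis. Interpolation between $L^1$ and $L^q$ then produces $\bigl\||H|-c^{-1}\bigr\|_r\leq C\varepsilon^{\alpha(q-r)/(r(q-1))}/c$ for any $r\in[1,q)$, which is \eqref{pinchcm} with the claimed exponent $\beta$.

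The ball-volume estimate \eqref{passepartout} is qualitative and is proved by a compactness/contradiction argument. For any sequence $(M_i)$ satisfying the hypotheses with $\varepsilon_i\to 0$, \eqref{estiray} shows that $X_i(M_i)$ converges in Hausdorff distance to a sphere $S$, while the strengthened bound $q>\max(4,n)$ combined with the equation $\Delta X = -n\vec H$ supplies enough elliptic regularity to deduce weak convergence of the image measures $X_{i,*}(\vol_{M_i})/\Vol M_i$ to $\vol_S/\Vol S$ on $\R^{n+1}$. Testing this weak convergence against smooth approximations of $\chi_{B(x,r/\|H\|_2)}$, and using that the boundaries $\partial B(x,r/\|H\|_2)\cap S$ are smooth in $S$, yields \eqref{passepartout}. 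The main obstacle here is establishing the measure convergence uniformly in $x\in S$: the supplementary integrability $q>4$ enters in the elliptic estimates needed to rule out degenerate multi-sheeted limits in low dimensions, where the basic Sobolev exponent $q>n$ is not by itself sufficient.
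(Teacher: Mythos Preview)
Your treatment of \eqref{estiray} and \eqref{pinchcm} is close in spirit to the paper's, though the details differ. For \eqref{estiray} the paper does not pass through the elliptic equation for $|u|^2-c^2$; instead it runs Moser iteration directly on $\varphi=\bigl||X|-\|X\|_2\bigr|$, exploiting the pointwise bound $|d\varphi|\leqslant 1$, and then feeds in the inequality $1-\|X\|_1/\|X\|_2\leqslant C(p)\varepsilon$ obtained from $(P_{p,\varepsilon})$. Your route requires controlling the right-hand side $-2n\,u\cdot(\vec H-c^{-2}u)$ in $L^s$ for some $s>n/2$ to close the iteration, and $(\ast)$ only gives $L^2$; this can be repaired (interpolate using the a priori bound $\|u\|_\infty\leqslant C(A)\|u\|_2$ and $\|H\|_q\leqslant C(A)$), but you should say so. For \eqref{pinchcm} both arguments are essentially the same: $L^1$ (or $L^2$) smallness from $(\ast)$, $L^q$ boundedness from the hypothesis, then H\"older interpolation.

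The genuine gap is in \eqref{passepartout}. Your compactness sketch asserts that ``elliptic regularity'' from $\Delta X=-n\vec H$ yields weak convergence of the pushforward measures $X_{i,*}(\vol_{M_i})/\Vol M_i$ to $\vol_S/\Vol S$; this is the entire content of the estimate and is not a consequence of any standard elliptic result. Hausdorff convergence of $X_i(M_i)$ to $S$ and tightness only give subsequential weak limits supported on $S$; identifying the limit as the \emph{uniform} measure is precisely the hard step. The paper proves it constructively: for every homogeneous harmonic polynomial $P$ it shows $\|H\|_2^{2k}\|P\|_2^2\to\|P\|_{\S^n}^2$ (Lemma~\ref{Ppresquortho2}) and that distinct degrees are almost $L^2$-orthogonal on $M$ (via Lemma~\ref{almosteigenf}); this is exactly weak measure convergence tested against a dense family, and it uses $q>4$ through the $\|Z\|_4$ term. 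Your claim that $q>4$ serves to ``rule out degenerate multi-sheeted limits'' is also off: Theorems~\ref{ctrexple1}--\ref{ctrexple3} exhibit multi-sheeted almost-extremal hypersurfaces with $\|H\|_\infty$ bounded, so multi-sheeting is \emph{not} excluded; \eqref{passepartout} holds because the normalisation by $\Vol M$ absorbs the multiplicity, not because multiplicity is forbidden. As written, your argument for \eqref{passepartout} does not go through.
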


Theorem \ref{1} generalizes and improves the main results of \cite{colgros} and \cite{roth}, where only the pinchings $(R_{p,\varepsilon})$ and $(\Lambda_{p,\varepsilon})$ for $p\geqslant 4$ and $q=\infty$ were considered. The control on the mean curvature (Inequality \eqref{pinchcm}) and Inequality \eqref{passepartout} are new, even under a $L^\infty$ bound on the mean curvature. Note that \eqref{passepartout} says not only that $M$ goes near any point of the sphere $S$ (as was proven in \cite{colgros,roth}) but also that the density of $M$ near each point of $S$ is close to $\Vol M/\Vol S$.

\begin{remark}
  From Inequalities \eqref{estiray} and \eqref{passepartout} we infer that almost extremal hypersurfaces for one of the three geometric inequalities \eqref{prim}, \eqref{rext} or \eqref{lambda} converge in Hausdorff distance to a metric sphere of $\R^{n+1}$. As shown in Theorem \ref{ctrexple1}, there is no Gromov-Hausdorff convergence if we do not assume a good enough bound on the second fundamental form.
\end{remark}

\begin{remark}
  By Theorem \ref{1}, when $\Vol M\|H\|_q^n\leqslant A$ ($q>n$), Pinching  $(P_{p,\varepsilon})$ implies Pinching $(R_{p,\varepsilon'})$ for a constant $\varepsilon'=\varepsilon'(\varepsilon|A,p,q,n)$. In other words, Pinchings  $(P_{p,\varepsilon})$ and $(R_{p,\varepsilon})$ are equivalent (in bounded mean curvature) and are both implied by Pinching $(\Lambda_{p,\varepsilon})$. However, we will see in Theorem \ref{ctrexple1} that Pinching $(P_{p,\varepsilon})$ (or $(R_{p,\varepsilon})$) does not imply Pinching $(\Lambda_{p,\varepsilon})$.
\end{remark}

\begin{remark}
   The constant $C(p,q,n,A)$ tends to $\infty$ when $p\to 2$ or $q\to n$, but the same result can be proved with $\Vol M\|H\|_q^n\leqslant A$ replaced by $\Vol M\|H-\|H\|_2\|_n^n\leqslant A(n)$, where $A(n)$ is a universal constant depending only on the dimension $n$.
\end{remark}

Inequality \ref{estiray} follows from the following new pinching result on momenta.

\begin{theorem}\label{diambound}
Let $q>n$ be a real. There exists a constant $C=C(q,n)$ such that for any isometrically immersed hypersurface $M$ of $\R^{n+1}$, we have
$$\sup_M\Bigl||X-\overline{X}|-\|X-\overline{X}\|_2\Bigr|\leqslant  C\bigl(\Vol M\|H\|_q^n\bigr)^\gamma\|X-\overline{X}\|_2\Bigl(1-\frac{\|X-\overline{X}\|_1}{\|X-\overline{X}\|_2}\Bigr)^\frac{1}{2(1+n\gamma)}$$
where $\gamma=\frac{q}{2(q-n)}$.

In particular, this gives
$$\|X-\xb\|_\infty\leqslant C\bigl(\Vol M\|H\|_q^n\bigr)^\gamma\|X-\xb\|_2$$
\end{theorem}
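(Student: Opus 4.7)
Set $u := |X-\xb|$, $c := \|u\|_2$, and $v := u-c$. Two observations frame the approach: first, $u$ is the restriction to $M$ of the $1$-Lipschitz function $y\mapsto|y-\xb|$ of $\R^{n+1}$, so $|\nabla v| = |\nabla u| \leq 1$ pointwise on $M$; second, the $L^2$-variance identity
\[
\|v\|_2^2 = \|u\|_2^2-2c\|u\|_1+c^2 = 2\|u\|_2(\|u\|_2-\|u\|_1)
\]
rewrites the pinching factor $(1-\|u\|_1/\|u\|_2)^{1/(2(1+n\gamma))}$ as $(\|v\|_2/(\sqrt{2}\,\|u\|_2))^{1/(1+n\gamma)}$. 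Combined with the elementary bound $\|v\|_2 \leq \|u\|_2 + c = 2\|u\|_2$, the theorem reduces to establishing the cleaner $L^\infty$--$L^2$ estimate
\[
\|v\|_\infty \leq C(q,n)(\Vol M\|H\|_q^n)^\gamma \|v\|_2,
\]
from which the stated mixed form follows by distributing $\|v\|_2$ as $\|v\|_2^{n\gamma/(1+n\gamma)}\|v\|_2^{1/(1+n\gamma)}$ and bounding the first factor by $(2\|u\|_2)^{n\gamma/(1+n\gamma)}$.

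\textbf{Proof via Moser iteration.} The analytic engine is the Michael-Simon Sobolev inequality for closed hypersurfaces of $\R^{n+1}$,
\[
\Bigl(\int_M f^{n/(n-1)}\Bigr)^{(n-1)/n} \leq K(n)\int_M(|\nabla f|+|H|f),\qquad f\geq 0,
\]
tested against $f = |v|^\beta$ for $\beta\geq 1$. The Lipschitz bound turns the gradient term into $\beta\int|v|^{\beta-1}$; Hölder against $\|H\|_q$ ($q>n$) turns the curvature term into $\|H\|_q\|v\|_{\beta q/(q-1)}^\beta$; the log-convexity interpolation
\[
\|v\|_{\beta q/(q-1)} \leq \|v\|_\beta^{1-n/q}\|v\|_{\beta n/(n-1)}^{n/q}
\]
combined with Young's inequality with the conjugate exponents $(q/n, q/(q-n))$---which is where the condition $q>n$ is crucial and where the exponent $\gamma = q/(2(q-n))$ first appears---allows us to absorb the high-norm term $\|v\|_{\beta n/(n-1)}$ into the left-hand side. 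The resulting recursion is then iterated along $\beta_j = 2(n/(n-1))^j$; the geometric series $\sum_j 1/\beta_j = n/2$ weighted against the Young exponent $2\gamma/n$ produces exactly $\gamma$ as the final power of $A = \Vol M\|H\|_q^n$, while the initial index $\beta_0 = 2$ fixes the base norm at $\|v\|_2$.

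\textbf{Main obstacle.} The delicate point is controlling the sub-leading additive term $\beta \Vol M^{1/n}\|v\|_{\beta-1}^{\beta-1}$ produced at each Moser step: it must be shown not to spoil the iteration, which is achieved by invoking monotonicity of renormalized $L^p$ norms ($\|v\|_{\beta-1}\leq\|v\|_\beta$) to reduce the recursion to a single unknown, and by verifying that the infinite product $\prod_j(C(n,q)\beta_j)^{1/\beta_j}$ converges (the series $\sum_j(\log\beta_j)/\beta_j$ is summable because $\beta_j$ grows geometrically). The scale-invariance of both $\Vol M\|H\|_q^n$ and the quotient $\|v\|_\infty/\|v\|_2$ under the dilation $M\mapsto\lambda M$ provides a useful consistency check on the exponents at every stage of the bookkeeping.
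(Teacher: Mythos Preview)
Your overall strategy---Michael--Simon Sobolev plus Moser iteration on powers of $\varphi=\bigl||X-\xb|-\|X-\xb\|_2\bigr|$---is exactly the paper's, and your variance identity $\|\varphi\|_2^2=2\|u\|_2^2(1-\|u\|_1/\|u\|_2)$ is how the final shape of the bound arises. But the ``cleaner'' target $\|\varphi\|_\infty\leqslant C A^\gamma\|\varphi\|_2$ is strictly stronger than the theorem, and your iteration does \emph{not} deliver it. The gap is precisely at what you flag as the main obstacle.

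After your Young-absorption step you are left with
\[
\tfrac12\|\varphi\|_{\beta n/(n-1)}^{\beta}\ \leqslant\ K(n)(\Vol M)^{1/n}\beta\,\|\varphi\|_{\beta-1}^{\beta-1}\ +\ C\,A^{2\gamma/n}\|\varphi\|_{\beta}^{\beta}.
\]
The gradient term has homogeneity $\beta-1$ in $\varphi$, not $\beta$. Monotonicity $\|\varphi\|_{\beta-1}\leqslant\|\varphi\|_\beta$ only gives $\|\varphi\|_{\beta}^{\beta-1}$, which still cannot be merged with $\|\varphi\|_\beta^\beta$ to produce a recursion of the form $\|\varphi\|_{\beta_{j+1}}\leqslant C_j\|\varphi\|_{\beta_j}$. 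Indeed, the inequality $\|\varphi\|_\infty\leqslant C A^\gamma\|\varphi\|_2$ would hold for \emph{any} function on $M$ with $|\nabla\varphi|\leqslant 1$ (that is the only property you use), and a bump of height $\epsilon$ supported on a geodesic ball of radius $\epsilon$ on a round sphere already has $\|\varphi\|_\infty/\|\varphi\|_2\sim\epsilon^{-n/2}\to\infty$ while $A$ stays bounded.

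The paper resolves this not by interpolation/Young but by writing $\|H\varphi^{2\alpha}\|_1\leqslant\|H\|_q\|\varphi\|_\infty\|\varphi\|_{(2\alpha-1)q/(q-1)}^{2\alpha-1}$, so that \emph{both} terms carry the exponent $2\alpha-1$ and the coefficient becomes $K(n)(\Vol M)^{1/n}\bigl(2\alpha+\|H\|_q\|\varphi\|_\infty\bigr)$. Iterating this yields the \emph{implicit} inequality
\[
\|\varphi\|_\infty\ \leqslant\ C\Bigl((\Vol M)^{1/n}\bigl(\tfrac{1}{\|\varphi\|_\infty}+\|H\|_q\bigr)\Bigr)^{n\gamma}\|\varphi\|_2,
\]
which is then solved by the dichotomy $\|\varphi\|_\infty\gtrless\|H\|_q^{-n\gamma/(1+n\gamma)}\|\varphi\|_2^{1/(1+n\gamma)}$ together with the geometric input $\|H\|_q\|X-\xb\|_2\geqslant 1$. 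It is exactly this step that replaces your hoped-for exponent $1$ on $\|\varphi\|_2$ by $1/(1+n\gamma)$ and injects the compensating factor $\|X-\xb\|_2^{n\gamma/(1+n\gamma)}$, giving the $\bigl(1-\|u\|_1/\|u\|_2\bigr)^{1/(2(1+n\gamma))}$ of the statement rather than the $\bigl(1-\|u\|_1/\|u\|_2\bigr)^{1/2}$ your reduction would produce.
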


Our next result shows that almost extremal hypersurfaces must satisfy strong spectral constraints. We denote $0=\mu_0<\mu_1<\cdots<\mu_i<\cdots$ the eigenvalues of the canonical sphere $\S^n$, $m_i$ the multiplicity of $\mu_i$ and $\displaystyle\sigma_k=\sum_{0\leqslant i\leqslant k} m_i$ (note that we have $\sigma_k=O(n^k)$ and $m_k=O(n^k)$). We also denote $0=\lambda_0(M)<\lambda_1(M)\leqslant\cdots\leqslant\lambda_i(M)\leqslant\cdots$ the eigenvalues of $M$ counted with multiplicities.

\begin{theorem}\label{maintheo}
Let $q>\max(n,4)$, $p>2$ and $A>0$ be some reals. There exist some positive functions $C=C(p,q,n,A)$ and $\alpha=\alpha(q,n)$ such that if $M$ satisfies $(P_{p,\varepsilon})$ and $\Vol M\|H\|_q^n\leqslant A$ then for any $k$ such that $2\sigma_kC^{2k}\leqslant\varepsilon^{-\alpha}$, the interval 
$$\bigl[(1-\varepsilon^\alpha\sqrt{m_k}C^{k})\|H\|_2^2\mu_k,(1+\varepsilon^\alpha\sqrt{m_k}C^{k})\|H\|_2^2\mu_k\bigr]$$
contains at least $m_k$ eigenvalues of $M$ counted with multiplicities.

Moreover, the previous intervals are disjoints and we get
$$\lambda_i(M)\leqslant\bigl(1+\varepsilon^\alpha\sqrt{m_k}C^{k}\bigr)\|H\|_2^2\lambda_i(\S^n)\ \ \ \ \ \ \mbox{ for any }i\leqslant \sigma_k-1,$$
and if $\lambda_{\sigma_k}(M)\geqslant\bigl(1+\varepsilon^\alpha\sqrt{m_k}C^k\bigr)\|H\|_2^2\mu_k$ then 
$$\bigl|\lambda_i(M)-\|H\|_2^2\lambda_i(\S^n)\bigr|\leqslant\varepsilon^\alpha\sqrt{m_k}C^k\|H\|_2^2\lambda_i(\S^n)\ \ \ \ \ \mbox{ for any }i\leqslant \sigma_k-1.$$
\end{theorem}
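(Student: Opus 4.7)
The plan is to build test functions from spherical harmonics pulled back via the rescaled, recentered immersion, and apply min-max. After rescaling so that $\|H\|_2=1$, set $Y=X-\xb$. Theorem \ref{1} yields $|Y|=1+O(\varepsilon^{\alpha})$ uniformly and, via \eqref{passepartout}, that the push-forward of $\vol/\Vol M$ is close (in the ball-density sense) to the uniform probability measure on $\S^n$. For each $k\geq 0$, fix an $L^2(\S^n)$-orthonormal basis $\{H_{k,j}\}_{j=1}^{m_k}$ of the $\mu_k$-eigenspace, extend each $H_{k,j}$ by $k$-homogeneity to a harmonic polynomial on $\R^{n+1}$, and set $f_{k,j}=H_{k,j}\circ Y$. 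The addition formula gives $\|H_{k,j}\|_\infty\leq C\sqrt{m_k}$ on $\S^n$, and Bernstein-type inequalities yield $\|\nabla H_{k,j}\|_\infty\leq C^k\sqrt{m_k}$ on the thin annulus containing $Y(M)$.

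The central computation is the Rayleigh quotient. By $k$-homogeneity of $H_{k,j}$ one has the pointwise identity
$$|\nabla_{\R^{n+1}}H_{k,j}|^2(y)=|y|^{2k-2}|\nabla_{\S^n}H_{k,j}|^2(y/|y|)+k^2H_{k,j}(y)^2/|y|^2,$$
which combined with the Gauss formula $|\nabla^Mf_{k,j}|^2=|\nabla_{\R^{n+1}}H_{k,j}|^2(Y)-(\partial_\nu H_{k,j}(Y))^2$ gives
$$|\nabla^Mf_{k,j}|^2=|Y|^{2k-2}|\nabla_{\S^n}H_{k,j}|^2(Y/|Y|)+\bigl[k^2H_{k,j}(Y)^2/|Y|^2-(\partial_\nu H_{k,j}(Y))^2\bigr].$$
Via \eqref{passepartout} and $|Y|=1+O(\varepsilon^\alpha)$, the first term integrated and normalized by $\Vol M$ equals $\mu_k\|f_{k,j}\|_2^2$ up to error $O(\varepsilon^\alpha\sqrt{m_k}C^k)$. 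The bracketed term is bounded pointwise by $C^{2k}m_k\cdot|\nu-Y/|Y||$, and the $L^2$ norm of $\nu-Y/|Y|$ is small because $Y^T=Y-\langle Y,\nu\rangle\nu=\tfrac12\nabla^M|Y|^2$ is small in $L^2$ (via Theorem \ref{diambound} and the $L^2$-control on $\nabla|Y|^2$ already used in the proof of Theorem \ref{1}). The parallel computation for the Gram matrix $(\int_M f_{k,j}f_{k',j'})$ shows it is close to $\Vol M/\Vol\S^n$ times the identity, with error of the same form.

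Under the hypothesis $2\sigma_kC^{2k}\leq\varepsilon^{-\alpha}$, the $\sigma_k$ test functions $\{f_{i,j}\}_{i\leq k,\,j\leq m_i}$ are linearly independent modulo the error, so Courant--Fischer gives $\lambda_{\sigma_k-1}(M)\leq(1+\varepsilon^\alpha\sqrt{m_k}C^k)\mu_k$. To produce at least $m_k$ eigenvalues in a window around $\mu_k$, expand each $f_{k,j}$ in the $\Delta^M$-eigenbasis: the closeness of the Rayleigh quotient to $\mu_k$ forces the spectral mass of $f_{k,j}$ outside the window $(1\pm\varepsilon^\alpha\sqrt{m_k}C^k)\mu_k$ to be small in $L^2$, so the projections of $f_{k,1},\ldots,f_{k,m_k}$ onto that window remain linearly independent (their Gram matrix is nearly the identity). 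Disjointness of the intervals for distinct $\mu_k$ is immediate from the constraint, and under the extra hypothesis $\lambda_{\sigma_k}(M)\geq(1+\varepsilon^\alpha\sqrt{m_k}C^k)\mu_k$ the pigeonhole becomes tight: the $\sigma_k$ eigenvalues $\lambda_0(M),\ldots,\lambda_{\sigma_k-1}(M)$ exactly fill the successive windows around $\mu_0,\ldots,\mu_k$, yielding the two-sided absolute bound.

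The main obstacle is matching the $\varepsilon^\alpha\sqrt{m_k}C^k$ form of the error through two competing factors: the polynomial growth in $k$ of $H_{k,j}$ and its derivatives, and the quantitative smallness of $\nu-Y/|Y|$ in $L^2$ coming from Theorems \ref{1} and \ref{diambound}. Careful bookkeeping of the $k$-dependence, while keeping the error uniform in the spherical harmonic index $j$ and tight enough that both the upper count and the lower count of eigenvalues in each window survive, is the delicate point.
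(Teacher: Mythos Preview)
Your Courant--Fischer step for the upper bound $\lambda_{\sigma_k-1}(M)\leqslant(1+\varepsilon^\alpha\sqrt{m_k}C^k)\mu_k$ is fine in spirit, but the argument for ``at least $m_k$ eigenvalues in the window'' has a genuine gap. You write that ``the closeness of the Rayleigh quotient to $\mu_k$ forces the spectral mass of $f_{k,j}$ outside the window to be small in $L^2$.'' This implication is false. If $f=\sum a_i\phi_i$ in the $\Delta^M$-eigenbasis, Rayleigh quotient closeness says only that the \emph{mean} $\sum a_i^2\lambda_i/\sum a_i^2$ is near $\mu_k$; the mass can sit symmetrically at $0$ and $2\mu_k$ and still give Rayleigh quotient exactly $\mu_k$. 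What forces concentration is the quasi-mode bound
\[
\bigl\|\Delta^M f_{k,j}-\mu_k f_{k,j}\bigr\|_2^2=\sum_i a_i^2(\lambda_i-\mu_k)^2\ \ \text{small},
\]
and nothing in your computation of $\int_M|\nabla^Mf_{k,j}|^2$ controls this second moment. Without it, the projection-to-the-window argument cannot be launched and the window could in principle be empty.

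The paper closes precisely this gap by computing $\Delta^M P$ directly for $P\in\hkr$ via the extrinsic formula
\[
\Delta^M P=\mu_k H^2P+(n+2k-2)H\,dP(Z)+\nabla^0dP(Z,Z),\qquad Z=\nu-HX,
\]
and then bounding each error term in $L^2$ using $\|Z\|_r$ small (Lemma~\ref{nldz}), $\|H^2-\|H\|_2^2\|_2$ small (Inequality~\eqref{pinchcm}), and the pointwise identities $\sum_i(d_xP_i(u))^2$, $\sum_i|\nabla^0dP_i|^2$ on an orthonormal basis of $\hkr$. This yields $\|\Delta^M P-\mu_k\|H\|_2^2P\|_2\leqslant\sqrt{m_k}\mu_kC^k\varepsilon^\alpha\|P\|_2$ and the window argument goes through by the standard contradiction: if $\dim E_k<m_k$, some nonzero $P\in\hkm$ is $L^2$-orthogonal to $E_k$, forcing $\|\Delta^M P-\mu_k\|H\|_2^2P\|_2\geqslant(\text{window radius})\|P\|_2$. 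A second, smaller issue: you invoke \eqref{passepartout} to transport integrals from $M$ to $\S^n$, but that inequality is stated only for fixed $r$ with $\varepsilon_0=\varepsilon_0(r,\eta)$ unspecified, so it does not deliver the explicit $\varepsilon^\alpha\sqrt{m_k}C^k$ rate; the paper instead compares $\|P\|_2$ and $\|P\|_{\S^n}$ directly by an induction on $k$ (Lemma~\ref{Ppresquortho}), and in fact uses the quasi-mode estimate to \emph{prove} \eqref{passepartout}, not the other way around.
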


\begin{remark}
  In the particular case of extremal hypersurfaces for Pinching $(\Lambda_{p,\varepsilon})$, Theorem \ref{maintheo} implies that 
$$\frac{n\|H\|_p^2}{1+\varepsilon}\leqslant\lambda_1(M)\leqslant\cdots\leqslant\lambda_{n+1}(M)\leqslant\bigl(1+C(n)\varepsilon^\alpha\bigr)n\|H\|_2^2$$
and so we must have the $n+1$-first eigenvalues close to each other. Compare to positive Ricci curvature where $\lambda_n$ close to $n$ implies $\lambda_{n+1}$ close to $n$, but we can have only $k$ eigenvalues close to $n$ for any $k\leqslant n-1$ (see \cite{Au}).
\end{remark}

Note that Theorem \ref{maintheo} does not say that the spectrum of almost extremal hypersurfaces for I\-ne\-quality \eqref{prim} is close to the spectrum of an Euclidean sphere, but only that the spectrum of the sphere $S=\overline{X}+\frac{1}{\|X\|_2}\cdot\S^n$ asymptotically appears in the spectrum of $M$. Our next two results show that this inclusion is strict in general (we have normalized the mean curvature by $\|H\|_2=1$ for sake of simplicity and $E(x)$ stands for the integral part of $x$).

\begin{theorem}\label{ctrexple1}
For any integers $l,p$ there exists sequence of embedded hypersurfaces $(M_j)$ of $\R^{n+1}$ diffeomorphic to $p$ spheres $\S^n$ glued by connected sum along $l$ points, such that $\|H_j\|_{\infty}^{~}\leqslant C(n)$, $\|B_j\|_{n}^{~}\leqslant C(n)$, $\bigl\||X_j|-1\bigr\|_\infty\to0$, $\bigl\||H_j|-1\bigr\|_1\to 0$, and for any $\sigma\in\N$ we have
$$\lambda_\sigma(M_j)\to\lambda_{E(\frac{\sigma}{p})}(\S^n).$$
In particular, the $M_j$ have at least $p$ eigenvalues close to $0$ whereas its extrinsic radius is close to $1$.
\end{theorem}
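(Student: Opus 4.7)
The plan is to construct $M_j$ by joining $p$ concentric nearly-unit spheres in $\R^{n+1}$ via $l$ thin minimal necks, verify all the stated bounds by direct computation piece-by-piece, and derive the spectral asymptotics from a classical spectral surgery argument.

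\textbf{Construction.} Let $\eta_j\to 0^+$. Take the $p$ disjoint nested concentric spheres $S_i^j=(1+(i-1)\eta_j)\cdot\S^n\subset\R^{n+1}$ for $i=1,\dots,p$, each lying in an $O(\eta_j)$-neighborhood of $\S^n$. Fix a connected graph $G$ on $\{1,\dots,p\}$ with $l$ edges (the connected-sum pattern) and, for each edge $\{a_k,b_k\}$, a distinct point $q_k\in\S^n$. At each $q_k$ remove small geodesic disks from $S_{a_k}^j$ and $S_{b_k}^j$ and insert a rotationally symmetric tube: a scaled piece of a minimal hypersurface of revolution in $\R^{n+1}$ (the catenoid when $n=2$, its higher-dimensional analogue otherwise) of waist $\eta_j$, smoothly joined to the two spheres through a short transition annulus. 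The result $M_j$ is a smooth embedded hypersurface of the prescribed topology.

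\textbf{Geometric bounds.} On each sphere piece, $|H|=1+O(\eta_j)$ and $|B|=O(1)$. On each minimal neck, $H\equiv 0$ and $|B|=O(\eta_j^{-1})$ over a volume $O(\eta_j^n)$, so $\int_{\text{neck}}|B|^n\,\vol=O(1)$ by scale invariance. The transition annulus can be engineered with $|H|\leq C(n)$, $|B|=O(\eta_j^{-1})$ and volume $O(\eta_j^n)$, contributing $O(\eta_j^n)$ to $\int ||H|-1|\,\vol$ and $O(1)$ to $\int |B|^n\,\vol$. Summing over all pieces yields $\|H_j\|_\infty\leq C(n)$, $\|B_j\|_n\leq C(n)$, and $\||H_j|-1\|_1=O(\eta_j)\to 0$. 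Since every piece of $M_j$ lies in an $O(\eta_j)$-annular neighborhood of $\S^n$, we also have $\||X_j|-1\|_\infty=O(\eta_j)\to 0$.

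\textbf{Spectral convergence.} The claim reduces, after ordering with multiplicities, to $\lambda_\sigma(M_j)\to\lambda_\sigma(\bigsqcup_p\S^n)$, since the eigenvalues of the disjoint union satisfy $\lambda_\sigma(\bigsqcup_p\S^n)=\lambda_{E(\sigma/p)}(\S^n)$ by direct reordering. The upper bound follows from min-max: transplant the first $\sigma+1$ eigenfunctions of $\bigsqcup_p\S^n$ to the spherical pieces of $M_j$ and multiply by a cut-off $\chi_j$ vanishing on a neighborhood of each neck with $\int_{M_j}|\nabla\chi_j|^2\,\vol\to 0$; such cut-offs exist because a point has zero $n$-capacity in an $n$-manifold for $n\geq 2$ (logarithmic cut-off for $n=2$, power-type for $n\geq 3$). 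The lower bound is dual: by compactness, normalized eigenfunctions of $M_j$ with bounded eigenvalue converge (weakly in $H^1$, strongly in $L^2$) after subsequence to an eigenfunction on $\bigsqcup_p\S^n$ with the same Rayleigh quotient, the capacity estimate ruling out $L^2$-mass being lost to the vanishing-volume necks.

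\textbf{Main obstacle.} The principal difficulty is the spectral lower bound, specifically ruling out concentration of eigenfunctions on the shrinking necks. This is a consequence of the classical Ann\'e--Colbois surgery lemma: the capacity cut-offs used above also allow any $H^1$-function on $M_j$ to be modified on the necks at cost $o(1)$ in its Rayleigh quotient. Combined with the compactness argument and the min-max principle, this concludes the proof.
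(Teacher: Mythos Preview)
Your approach is essentially the same as the paper's: catenoid-type necks joining near-unit spheres, with spectral convergence via zero-capacity cut-offs. The paper implements it by writing the whole hypersurface as a radial graph $(1+\varphi_\varepsilon)$ over $\S^n$, where $\varphi_\varepsilon$ solves the catenoid ODE $\varphi''=-(n{-}1)(1{+}\varphi'^2)\varphi'/r$ near the waist $r=\varepsilon$ and is smoothed to a constant at a \emph{fixed} (non-shrinking) radius $r\sim a$, so the transition region already has $|B|=O(1)$ and the bound $|H|\leqslant C(n)$ there is automatic rather than needing the engineering you assert; for the spectral lower bound the paper combines Dirichlet monotonicity on the complement of the necks with an explicit Moser-iteration $L^\infty$ eigenfunction estimate, rather than invoking Ann\'e--Colbois as a black box.
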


\begin{theorem}\label{ctrexple3}
There exists sequence of immersed hypersurfaces $(M_j)$ of $\R^{n+1}$ diffeomorphic to $2$ spheres $\S^n$ glued by connected sum along $1$ great subsphere $S^{n-2}$, such that $\|H_j\|_{\infty}^{~}\leqslant C(n)$, $\|B_j\|_2^{~}\leqslant C(n)$, $\bigl\||X_j|-1\bigr\|_\infty\to0$, $\bigl\||H_j|-1\bigr\|_1\to 0$, and for any $\sigma\in\N$ we have
$$\lambda_\sigma(M_j)\to\lambda_{E(\frac{\sigma}{2})}(\S^{n,d}),$$
where $\S^{n,d}$ is the sphere $\S^n$ endowed with the singular metric, pulled-back of the canonical metric of $\S^n$ by the map $\pi:(y,z,r)\in\S^1\times\S^{n-2}\times[0,\frac{\pi}{2}]\mapsto(y^d,z,r)\in\S^1\times\S^{n-2}\times[0,\frac{\pi}{2}]$, where $\S^1\times\S^{n-2}\times[0,\frac{\pi}{2}]$ is identified with $\S^n\subset \R^{2}\times\R^{n-1}$ via the map $\Phi(y,z,r)=\bigl((\sin r) y,(\cos r) z\bigr)$.
Note that $\S^{n,d}$ has infinitely many eigenvalues that are not eigenvalues of $\S^n$.
\end{theorem}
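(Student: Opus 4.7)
My strategy is to construct $M_j$ by joining two copies of a perturbed $d$-to-$1$ immersion $\S^n\to\S^n\subset\R^{n+1}$ along a vanishingly thin minimal (generalized catenoidal) neck near the branch locus. Using the decomposition $\S^n=\{((\sin r)y,(\cos r)z):(y,z,r)\in\S^1\times\S^{n-2}\times[0,\pi/2]\}\subset\R^2\times\R^{n-1}$, the map $\Psi_d(y,z,r):=((\sin r)y^d,(\cos r)z)$ is a smooth branched $d$-fold cover of $\S^n$ whose induced (singular) pullback metric is exactly the $\S^{n,d}$ metric. I would smooth $\Psi_d$ near $r=0$ by replacing $\sin r$ with a function $s_\varepsilon(r)\geqslant\varepsilon$, producing a smooth immersion $\widetilde\Psi_\varepsilon$ of $D^{n-1}\times\S^1$ whose boundary is a thin torus $\S^{n-2}\times\S^1(\varepsilon)$ lying near the great subsphere $\{(z,0):|z|=1\}\subset\S^n$. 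Two such copies, joined across their boundaries by a rotationally symmetric minimal hypersurface of revolution about $\S^{n-2}$, yield an immersion of $\S^{n-1}\times\S^1$ (the topological type of the connected sum) into $\R^{n+1}$.

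For the geometric estimates: on each spherical piece, the principal curvatures are bounded by $C(n,d)$, and $|H_j|\equiv 1$ away from the short perturbation zone. On the minimal neck $H_j\equiv 0$, while $|B_j|\sim\varepsilon^{-1}$ at the waist. A scaling computation generalizing the classical identity $\int_{\mathrm{catenoid}}|B|^2=8\pi$ in dimension $n=2$ shows that $\int_{\mathrm{neck}}|B_j|^2$ stays bounded (and in fact shrinks like $\varepsilon^{n-2}$ for $n\geqslant 3$), hence $\|B_j\|_2\leqslant C(n)$. Since the spherical pieces lie (up to an $O(\varepsilon)$ perturbation) in $\S^n$ and the neck collapses to the great subsphere, $\||X_j|-1\|_\infty\to 0$; the region where $|H_j|\neq 1$ has volume $o(1)$, hence $\||H_j|-1\|_1\to 0$.

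The spectral upper bound $\lambda_\sigma(M_j)\leqslant\lambda_{E(\sigma/2)}(\S^{n,d})+o(1)$ follows from the $\min$-$\max$ principle: given the first $k{+}1$ eigenfunctions $\phi_0,\ldots,\phi_k$ of $\S^{n,d}$, transplant each to \emph{both} spherical pieces of $M_j$ using a cutoff vanishing on the neck. This produces $2(k{+}1)$ almost $L^2$-orthogonal test functions in $H^1(M_j)$ whose Rayleigh quotients converge to the $\S^{n,d}$-eigenvalues, matching the spectrum of the disjoint union $\S^{n,d}\sqcup\S^{n,d}$.

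The main obstacle is the matching lower bound, for which I would argue by compactness. The first $\sigma{+}1$ normalized eigenfunctions of $M_j$ are bounded in $H^1(M_j)$; a Poincar\'e-type estimate on the vanishing neck shows that their $L^2$-mass there tends to zero, so their restrictions to the two spherical pieces converge, after a subsequence, to $H^1$-limits on two copies of $\S^{n,d}\setminus\{r=0\}$. Elliptic regularity away from the singular subsphere identifies these limits as eigenfunctions of $\S^{n,d}$ with eigenvalues $\lim\lambda_i(M_j)$. The delicate point is that $\S^{n,d}$ is a singular space: one must verify that the limit functions lie in the Friedrichs domain of the Laplacian on $\S^{n,d}$ and that this self-adjoint operator has precisely the limiting spectrum. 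The latter is checked by explicit separation of variables in $(y,z,r)$, yielding a countable discrete spectrum whose eigenvalues correspond to Fourier modes $k/d$ in $y$ and thus include infinitely many values not present in $\mathrm{Spec}(\S^n)$.
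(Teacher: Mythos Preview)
Your outline is essentially sound and would lead to a correct proof, but it differs from the paper's in two respects worth noting.

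\emph{Construction.} The paper does not build the branched immersion from scratch. It first constructs, in the proof of Theorem~\ref{ctrexple1}, an \emph{embedded} hypersurface $M_\varepsilon^{n-2}$ (two round spheres joined along a great $\S^{n-2}$ by an explicit generalized-catenoidal profile $\varphi_\varepsilon$), with all curvature bounds already verified there, in particular $\|B_\varepsilon\|_{n-k}=\|B_\varepsilon\|_2\leqslant C(n)$ from Theorem~\ref{ctrexple2}. It then simply pulls this back by the $d$-fold cover $y\mapsto y^d$ in the $\S^1$-factor of the chart $\Phi$. Since the cover is a local isometry onto its image, the pointwise curvature quantities and the integral norms carry over for free, and one lands immediately on immersed hypersurfaces with the desired bounds. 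Your route---smoothing the branched map $\Psi_d$ and grafting in a minimal neck---is perfectly viable, but forces you to redo the delicate curvature estimates (Lemma~\ref{courbmoy} and the computation following Theorem~\ref{ctrexple2}) in the covered setting. The paper's shortcut is simply to do those estimates once, downstairs.

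\emph{Spectral lower bound.} Here the two approaches genuinely diverge. You propose a compactness argument: extract $H^1$-limits of normalized eigenfunctions on the two spherical pieces and identify them as eigenfunctions on $\S^{n,d}$. As you correctly flag, this requires showing that the limits lie in the Friedrichs domain of the singular Laplacian---a nontrivial point. The paper avoids this entirely. It uses the Dirichlet principle for the inequality $\lambda_\sigma(M_\varepsilon)\leqslant\lambda_\sigma(\cup_i\Omega_{i,\eta,\varepsilon})$, and for the reverse inequality it exploits Proposition~\ref{norminffoncprop} (a Moser-iteration $L^\infty$ bound on low eigenfunctions, using only $\|H\|_q$ and the Michael--Simon Sobolev inequality) to transplant eigenfunctions of $M_\varepsilon$ via a cutoff $\psi_\varepsilon$ into $H^1_0(\cup_i\Omega_{i,\eta,\varepsilon})$ with controlled Rayleigh-quotient loss. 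The capacity-zero nature of the branch locus then gives convergence of the Dirichlet spectra to that of two disjoint copies of $\S^{n,d}$. This min-max-in-both-directions argument is more elementary and robust than a compactness argument, and in particular sidesteps any analysis of self-adjoint extensions on the singular limit. If you wish to keep your compactness approach, you will need to supply that analysis; otherwise, the paper's method (which you already use for the upper bound) gives the lower bound as well with much less effort.
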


\begin{remark}
  Theorem \ref{ctrexple1} shows that Pinching $(\Lambda_{p,\varepsilon})$ is not implied by Pinching $(P_{p,\varepsilon})$ nor Pinching $(R_{p,\varepsilon})$, even under an upper bound on $\|B\|_n$.
\end{remark} 

\begin{remark}\label{rem}
It also shows that almost extremal manifolds are not necessarily diffeomorphic nor Gromov-Hausdorff close to a sphere. We actually prove that the $(M_j)$ can be constructed by gluing spheres along great subspheres $S^{k_i}$ with $k_i\leqslant k\leqslant n-2$ and with $\|B_j\|_{n-k}\leqslant C(k,n)$ (see the last section of this article).
\end{remark}

In \cite{colgros} and \cite{roth} it has been proved that when the $L^{\infty}$-norm of the second fundamental form is bounded above, then almost extremal hypersurfaces are Lipschitz close to a sphere $S$ of radius $\frac{1}{\|H\|_2}$ (which implies closeness of the spectra). In view of Theorem \ref{ctrexple1}, we can wonder what stands when $\|B\|_q$ is bounded with $q>n$.

\begin{theorem}\label{2} Let $q>n$, $p>2$ and $A>0$ be some reals. There exist some po\-sitive functions $C=C(p,q,n,A)$ and $\alpha=\alpha(q,n)$ such that if $M$ satisfies $(P_{p,\varepsilon})$ and $\Vol M\|B\|_q^n\leqslant A$, then the map $$\begin{matrix}F&:&M&\longrightarrow&\frac{1}{\normdfonc{H}}\S^n\\
 & &x&\longmapsto&\frac{1}{\normdfonc{H}}\frac{X_x}{|X_x|}
\end{matrix}$$ is a diffeomorphism and satisfies $||dF(u)|^2-|u|^2|\leqslant C\varepsilon^{\alpha}|u|^2$ for any vector $u\in T M$.
\end{theorem}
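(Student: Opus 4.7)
The plan is to translate so $\xb=0$ and normalize $\|H\|_2=1$. By Theorem~\ref{1}, $\bigl||X|-1\bigr|_\infty\leq C\varepsilon^\alpha$, so $X$ never vanishes and $F(x)=X_x/|X_x|$ is well defined. A direct computation gives
$$dF_x(u)=\frac{1}{|X|}\Bigl(u-\frac{\langle u,X\rangle}{|X|^2}X\Bigr),\qquad |dF_x(u)|^2=\frac{1}{|X|^2}\Bigl(|u|^2-\frac{\langle u,X^T\rangle^2}{|X|^2}\Bigr),$$
where $X^T$ denotes the tangent component of $X$ along $M$. Since $1/|X|^2=1+O(\varepsilon^\alpha)$, the conclusion reduces to proving an $L^\infty$ bound $\|X^T\|_\infty\leq C\varepsilon^\beta$ for some $\beta>0$ depending only on $p,q,n,A$.

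The key is the identity $X^T=\nabla(|X|^2/2)$ together with the formula
$$\mathrm{Hess}\,\tfrac{|X|^2}{2}(u,v)=g(u,v)+\langle X^\perp,B(u,v)\rangle,$$
obtained from $\nabla^{\R^{n+1}}_u X=u$ and the Gauss formula. Combined with the hypothesis $\Vol M\,\|B\|_q^n\leq A$ and the $L^\infty$ bound on $|X|$, this yields $\|\mathrm{Hess}\,|X|^2\|_q\leq C(p,q,n,A)$ with $q>n$. Meanwhile Theorem~\ref{1} gives $\bigl\||X|^2-\|X\|_2^2\bigr\|_\infty\leq C\varepsilon^\alpha$.

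The technical heart of the argument is then an interpolation inequality on $M$ of Gagliardo–Nirenberg type,
$$\|\nabla u\|_\infty\leq C\,\|u\|_\infty^{1-\theta}\bigl(\|u\|_\infty+\|\mathrm{Hess}\,u\|_q\bigr)^\theta,$$
applied to $u=|X|^2-\|X\|_2^2$, which delivers $\|X^T\|_\infty\leq C\varepsilon^{\alpha(1-\theta)}$. The main obstacle is securing such an inequality with a constant depending only on $p,q,n,A$. The natural route is a Moser iteration driven by the Bochner identity
$$\tfrac{1}{2}\Delta|\nabla u|^2=|\mathrm{Hess}\,u|^2+\langle\nabla u,\nabla\Delta u\rangle+\mathrm{Ric}(\nabla u,\nabla u),$$
in which the Ricci term is controlled pointwise by $|B|^2$ via the Gauss equation (and lies in $L^{q/2}$ with $q/2>n/2$), while the iteration is powered by the Michael–Simon Sobolev inequality, whose constant depends only on $n$ and on $\|H\|_q$, hence only on the allowed data.

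Once $\|X^T\|_\infty\leq C\varepsilon^\beta$ is established, injecting it into the formula for $|dF_x(u)|^2$ together with $|X|^2=1+O(\varepsilon^\alpha)$ yields $\bigl||dF_x(u)|^2-|u|^2\bigr|\leq C\varepsilon^{\min(\alpha,2\beta)}|u|^2$. In particular $dF$ is everywhere invertible for $\varepsilon$ small enough, so $F\colon M\to\S^n$ is a local diffeomorphism between compact connected $n$-manifolds without boundary; being proper it is a covering map, and since $\S^n$ is simply connected for $n\geq 2$, $F$ is a diffeomorphism. Rescaling by $1/\|H\|_2$ restores the map onto $\tfrac{1}{\|H\|_2}\S^n$ stated in the theorem.
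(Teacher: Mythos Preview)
Your overall strategy matches the paper's: reduce the quasi-isometry estimate to an $L^\infty$ bound on $\psi:=|X^T|$, and obtain that bound by a Moser iteration powered by the Michael--Simon Sobolev inequality. You also correctly supply the covering-space argument for the diffeomorphism, which the paper's proof section leaves implicit.

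The difference is in how the iteration is set up. You propose to reach a Gagliardo--Nirenberg inequality $\|\nabla u\|_\infty\leq C\|u\|_\infty^{1-\theta}(\|u\|_\infty+\|\mathrm{Hess}\,u\|_q)^\theta$ via the full Bochner identity, controlling $\mathrm{Ric}$ by $|B|^2\in L^{q/2}$. This can be made to work, but the $\langle\nabla u,\nabla\Delta u\rangle$ term is delicate: for $u=|X|^2/2$ one has $\Delta u=nH\langle X,\nu\rangle-n$, so $\nabla\Delta u$ involves $\nabla H$, which is not controlled; you would need an integration by parts and careful bookkeeping that you do not spell out. The paper avoids all of this. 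From the very Hessian formula you wrote, together with Kato's inequality $|d|\nabla u||\leq|\mathrm{Hess}\,u|$, one gets directly the \emph{pointwise} bound
\[
|d\psi|\;\leq\;|\mathrm{Hess}(|X|^2/2)|\;=\;|g+\langle X,\nu\rangle B|\;\leq\; n+C\,|X|\,|B|,
\]
so $|d\psi|$ is in $L^q$ with $q>n$. The Moser scheme is then identical to that of Theorem~\ref{diambound} and yields $\|\psi\|_\infty^{\gamma+1}\leq C\|\psi\|_2/\|H\|_2^\gamma$. For the small input the paper uses $\|\psi\|_2=\|X-\langle X,\nu\rangle\nu\|_2\leq\sqrt{3\varepsilon}\,\|X\|_2$ (a direct consequence of $(P_{p,\varepsilon})$, cf.\ Section~\ref{poip}), rather than going through the $L^\infty$ bound on $|X|^2-\|X\|_2^2$. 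In short: your route is viable but heavier than necessary; the paper's point is that only Kato (not the full Bochner formula, and no Ricci term) is needed, which makes the iteration a verbatim repeat of the one already carried out for Theorem~\ref{diambound}.
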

\bigskip

The structure of the paper is as follows: after preliminaries on the geometric inequalities for hypersurfaces in Section \ref{prel}, we prove in Section \ref{upoter} a general bound on extrinsic radius that depends on integral norms of the mean curvature (see Theorem \ref{diambound}). We prove Inequality \eqref{estiray} in Section \ref{poie} and Inequality \eqref{pinchcm} in Section \ref{poip}. Theorem \ref{2} is proven in Section \ref{pot2}. Section \ref{Homog} is devoted to estimates on the trace on hypersurfaces of the homogeneous, harmonic polynomials of $\R^{n+1}$. These estimates are used in Section \ref{potm} to prove Theorem \ref{maintheo} and in section \ref{poipa} to prove Inequality \eqref{passepartout}. We end the paper in section \ref{se} by the constructions of Theorems \ref{ctrexple1} and \ref{ctrexple3}.

 Throughout the paper we adopt the notation that $C(p,q,n,A)$ is function greater than $1$ which depends on $p$, $q$, $n$, $A$. These functions  will always be of the form $C=D(p,q,n)A^{\beta(q,n)}$. But it eases the exposition to disregard the explicit nature of these functions. The convenience of this notation is that even though $C$ might change from line to line in a calculation it still maintains these basic features.

\section{Preliminaries}\label{prel}

Let $X:(M^n,g)\to\R^{n+1}$ be a closed, connected, isometrically immersed $n$-manifold ($n\geqslant 2)$. If $\nu$ denotes a local normal vector field of $M$ in $\R^{n+1}$, the second fundamental form of $(M^n,g)$ associated to $\nu$ is $B(\cdot\,,\cdot){=}\scal{\nabla^0_\cdot \nu}{\cdot}$ and the mean curvature is $H{=}(1/n)\tr(B)$, where $\nabla^0$ and $\langle\cdot\, ,\cdot\rangle$ are the Euclidean connection and inner product on $\R^{n+1}$.

Any function $F$ on $\R^{n+1}$ gives rise to a function $F\circ X$ on $M$ which, for more convenience, will be also denoted $F$ subsequently. An easy computation gives the formula
\begin{equation}
 \label{fondhess}
 \Delta F=nHdF(\nu)+\Delta^0F+\nabla^0dF(\nu,\nu),
\end{equation}
where $\Delta$ denotes the Laplace-Beltrami operator of $(M,g)$ and $\Delta^0$ is  the Laplace-Beltrami operator of $\R^{n+1}$. Applied to $F(x)=x_i$ or $F(x)=\langle x,x\rangle$, Formula \ref{fondhess} gives the following
\begin{align}\label{xi}\Delta X_i=nH\nu_i,\ \ \ \ \ \ \ \  \langle\Delta X,X\rangle=nH\langle\nu,X\rangle\end{align}
\begin{align}\label{hsiung}\frac{1}{2}\Delta |X|^2=nH\scal{\nu}{X}-n,\ \ \ \ \ \ \ \ \int_MH\langle \nu,X\rangle\vol=\Vol M\end{align}
These formulas are fundamental to control the geometry of hypersurfaces by their mean curvature.

\subsection{A rough bound on geometry}\label{rbog}

The integrated Hsiung formula \eqref{hsiung} and the Cauchy-Schwarz inequality give the following
\begin{align}\label{prim2}\int_M\frac{ H\langle \nu,X\rangle\vol}{\Vol M}=1\leqslant\|H\|_2\bigl\|X-\xb\bigr\|_2=\|H\|_2\inf_{u\in\R^{n+1}}\|X-u\|_2\end{align}
 This inequality $\|H\|_2\|X-\overline{X}\|_2\geqslant 1$ is optimal since $M$ satisfies 
$$\|H\|_2\bigl\|X-\xb\bigr\|_2=1$$
if and only if $M$ is a sphere of radius $\frac{1}{\|H\|_2}$ and center $\xb$. Indeed, in this case $X-\xb$ and $\nu$ are everywhere colinear, hence the differential of the function $|X-\xb|^2$ is zero on $M$. Equality \eqref{hsiung} then implies that $H$ is constant on $M$ equal to $|X-\xb|^{-1}$.

\subsection{Hasanis-Koutroufiotis inequality on extrinsic radius}

We set $R$ the extrinsic Radius of $M$, i.e. the least radius of the balls of $\R^{n+1}$ which contain $M$. Then Inequality \eqref{prim2} gives
\begin{align}\label{HK}
  \|H\|_2R_{ext}\geqslant\|H\|_2\|X-\overline{X}\|_2&=\|H\|_2\inf_{u\in\R^{n+1}}\|X-u\|_2&\nonumber\\
 \|H\|_2R_{ext}&\geqslant1
\end{align}
and when $R_{ext}=\frac{1}{\|H\|_2}$, we have equality in \eqref{prim2}, i.e. $M$ is a sphere of radius $\frac{1}{\|H\|_2}$.

\subsection{Reilly inequality on $\mathbf{\lambda_1}$}

We translate $M$ so that $\xb=0$. By the min-max principle and Equality \eqref{xi}, we have
$$\displaylines{\frac{\lambda_1}{n}\|X\|_2^2\leqslant\frac{\int_M\langle X,\Delta X\rangle\vol}{n\Vol M}=\int_M\frac{H\langle\nu,X\rangle\vol}{\Vol M}=1=\Bigl(\int_M\frac{H\langle\nu,X\rangle\vol}{\Vol M}\Bigr)^2}$$
where $\lambda_1$ is the first nonzero eigenvalue of $M$. Combined with Inequality \eqref{prim2}, we get the Reilly inequality
\begin{align}\label{R}
  \lambda_1\leqslant \frac{n}{\Vol M}\insm H^2\vol.
\end{align}
Here also, equality in the Reilly inequality gives equality in $\ref{prim2}$ and so it characterizes the sphere of radius $\frac{1}{\|H\|_2}=\|X\|_2=\sqrt{\frac{n}{\lambda_1}}$.

%%%%%%%%%%%%%%%%%%%%%%%%%%%%%%%%%%%%%%%%%%%%%%%%%%%%%%%%%%%%%%%%%%%%%%%%%%%%%%%%%%%%%%%%%%%%%%%%%%%%%%%%%%%%%%%%%%%%%%%%%%%%%%%%%%%%%%%%%%%%%%%%%%%%%%%%%%%%%%%%%%%%%%%%%%%%%%%%%%%%%%%%%%%%%%%%%%%%%%%%%%%%%%%%%%%%%%%%%%%%%%%%%%%%%%%%%%%%%%%%%%%%%%%%%%%%%%%%%%%%%%%%%%

\section{Upper bound on the extrinsic radius}\label{upoter}

In this section we prove Theorem \ref{diambound}.

\begin{proof}
We translate $M$ such that $\xb=0$. We set $\varphi=\bigl||X|-\|X\|_2\bigr|$. We have $|d\varphi^{2\alpha}|\leqslant2\alpha\varphi^{2\alpha-1}$, hence, using the Sobolev inequality (see \cite{Mi-Si})
\begin{align}\label{simon}
\|f\|_\frac{n}{n-1}\leqslant K(n)(\Vol M)^\frac{1}{n}\bigl(\|df\|_1+\|Hf\|_1\bigr)
\end{align}
we get for any $\alpha\geqslant 1$
$$\displaylines{\|\varphi\|_\frac{2\alpha n}{n-1}^{2\alpha}\leqslant K(n)(\Vol M)^\frac{1}{n}\bigl(2\alpha\|\varphi\|_{2\alpha-1}^{2\alpha-1}+\|H\varphi^{2\alpha}\|_1\bigr)
\hfill\cr
\hfill\leqslant K(n)(\Vol M)^\frac{1}{n}\bigl(2\alpha\|\varphi\|_{2\alpha-1}^{2\alpha-1}+\|H\|_q\|\varphi\|_\frac{2\alpha q}{q-1}^{2\alpha}\bigr)\hfill\cr
\hfill\leqslant K(n)(\Vol M)^\frac{1}{n}\bigl(2\alpha\|\varphi\|_\frac{(2\alpha-1)q}{q-1}^{2\alpha-1}+\|H\|_q\|\varphi\|_\infty\|\varphi\|_\frac{(2\alpha-1) q}{q-1}^{2\alpha-1}\bigr)\cr
\hfill\leqslant K(n)(\Vol M)^\frac{1}{n}\bigl(2\alpha+\|H\|_q\|\varphi\|_\infty\bigr)\|\varphi\|_\frac{(2\alpha-1)q}{q-1}^{2\alpha-1}}$$
We set $\nu=\frac{n(q-1)}{(n-1)q}$ and $\alpha=a_p\frac{q-1}{2q}+\frac{1}{2}$, where $a_{p+1}=\nu a_p+\frac{n}{n-1}$ and $a_0=\frac{2q}{q-1}$ (i.e. $a_p=a_0\nu^p+\frac{\nu^p-1}{\nu-1}\frac{n}{n-1}$). The previous inequality gives
$$\Bigl(\frac{\|\varphi\|_{a_{p+1}}}{\|\varphi\|_\infty}\Bigr)^\frac{a_{p+1}}{\nu^{p+1}}\leqslant\Bigl(K(n)(\Vol M)^\frac{1}{n}\bigl(\frac{a_p\frac{q-1}{q}+1}{\|\varphi\|_\infty}+\|H\|_q\bigr)\Bigr)^\frac{n}{\nu^{p+1}(n-1)}\Bigl(\frac{\|\varphi\|_{a_p}}{\|\varphi\|_\infty}\Bigr)^\frac{a_p}{\nu^p}$$
Since $q>n$ then $\nu>1$ and $\frac{a_p}{\nu^p}$ converges to $a_0+\frac{qn}{q-n}$ and we have
$$\displaylines{1\leqslant\Bigl(\frac{\|\varphi\|_{a_0}}{\|\varphi\|_\infty}\Bigr)^{2}\prod_{i=0}^\infty\Bigl(2K(n)(\Vol M)^\frac{1}{n}a_i\bigl(\frac{1}{\|\varphi\|_\infty}+\|H\|_q\bigr)\Bigr)^\frac{1}{\nu^{i}}
\hfill\cr
\hfill=\Bigl(\frac{\|\varphi\|_{a_0}}{\|\varphi\|_\infty}\Bigr)^{2}\Bigl(\prod_{i=0}^{\infty}a_i^{\frac{1}{\nu^i}}\Bigr)\Bigl(2K(n)(\Vol M)^\frac{1}{n}\bigl(\frac{1}{\|\varphi\|_\infty}+\|H\|_q\bigr)\Bigr)^{\frac{\nu}{\nu-1}}\hfill\cr
\hfill=C(q,n)\Bigl(\frac{\|\varphi\|_{a_0}}{\|\varphi\|_\infty}\Bigr)^{2}\Bigl((\Vol M)^\frac{1}{n}\bigl(\frac{1}{\|\varphi\|_\infty}+\|H\|_q\bigr)\Bigr)^\frac{n(q-1)}{q-n}\hfill\cr
\leqslant C(q,n)\Bigl(\frac{\|\varphi\|_{2}}{\|\varphi\|_\infty}\Bigr)^\frac{2(q-1)}{q}\Bigl((\Vol M)^\frac{1}{n}\bigl(\frac{1}{\|\varphi\|_\infty}+\|H\|_q\bigr)\Bigr)^\frac{n(q-1)}{q-n}}$$
hence we have
$$\|\varphi\|_\infty\leqslant C(q,n)\Bigl((\Vol M)^\frac{1}{n}\bigl(\frac{1}{\|\varphi\|_\infty}+\|H\|_q\bigr)\Bigr)^\frac{nq}{2(q-n)}\|\varphi\|_2$$
We set $\gamma=\frac{nq}{2(q-n)}$. If $\|\varphi\|_\infty\geqslant\|H\|_q^{-\frac{\gamma}{1+\gamma}}\|\varphi\|_2^\frac{1}{1+\gamma}$ then we get the result since we have
$$\displaylines{\|\varphi\|_\infty\leqslant C(q,n)\Bigl((\Vol M)^\frac{1}{n}\bigl(\|H\|_q^{\frac{\gamma}{1+\gamma}}\|\varphi\|_2^\frac{-1}{1+\gamma}+\|H\|_q\bigr)\Bigr)^\gamma\|\varphi\|_2\cr
\leqslant C(q,n)\bigl((\Vol M)^\frac{1}{n}\|H\|_q\bigr)^\gamma\Bigl(\|X\|_2^\frac{1}{1+\gamma}+\|\varphi\|_2^\frac{1}{1+\gamma}\Bigr)^\gamma\|\varphi\|_2^\frac{1}{1+\gamma}}$$
where we have used that $\|H\|_q\|X\|_2\geqslant 1$. We infer the result from the equality $\|\varphi\|_2=\sqrt{2}\|X\|_2\bigl(1-\frac{\|X\|_1}{\|X\|_2}\bigr)^{1/2}$.
If $\|\varphi\|_\infty\leqslant\|H\|_q^{-\frac{\gamma}{1+\gamma}}\|\varphi\|_2^\frac{1}{1+\gamma}$, we get immediately the desired inequality of the Theorem from the above expression of $\|\varphi\|_2$ and the fact that $\|H\|_q\|X\|_2\geqslant 1$.
\end{proof}

%%%%%%%%%%%%%%%%%%%%%%%%%%%%%%%%%%%%%%%%%%%%%%%%%%%%%%%%%%%%%%%%%%%%%%%%%%%%%%%%%%%%%%%%%%%%%%%%%%%%%%%%%%%%%%%%%%%%%%%%%%%%%%%%%%%%%%%%%%%%%%%%%%%%%%%%%%%%%%%%%%%%%%%%%%%%%%%%%%%%%%%%%%%%%%%%%%%%%%%%%%%%%%%%%%%%%%%%%%%%%%%%%%%%%%%%%%%%%%%%%%%%%%%%%%%%%%%%%%%%%%%%%%%%%%%%%%%%%%%
\section{Proof of Inequality \eqref{estiray}}\label{poie}

Let $M$ be an isometrically immersed hypersurface of $\R^{n+1}$. We can, up to translation, assume that $\int_M X\vol=0$.
By the H\"{o}lder inequality and Pinching $(P_{p,\varepsilon})$, we have $\|H\|_p\|X\|_2\leqslant(1+\varepsilon)\leqslant(1+\varepsilon)\|H\|_p\|X\|_\frac{p}{p-1}\leqslant(1+\varepsilon)\|H\|_p\|X\|_1^{1-\frac{2}{p}}\|X\|_2^\frac{2}{p}$, hence
$$1-\frac{\|X\|_1}{\|X\|_2}\leqslant\bigl((1+\varepsilon)^\frac{p}{p-2}-1\bigr)\leqslant\frac{p}{p-2}2^\frac{2}{p-2}\varepsilon$$

On the other hand applying Inequality \eqref{simon} to $f=1$ we get \begin{align}\label{minorA}1\leqslant K(n)(\Vol M)^\frac{1}{n}\|H\|_1\end{align}
And combining the two above inequalities with Theorem \ref{diambound} and  $1\leqslant\|H\|_2\|X\|_2\leqslant1+\varepsilon$ we get \eqref{estiray}. More precisely we have $\displaystyle\bigl\||X|-\frac{1}{\normdfonc{H}}\bigr\|_{\infty}\leqslant\frac{C(p,q,n)A^{\gamma/n}}{\normdfonc{H}}\varepsilon^{\alpha(q,n)}$.

\begin{remark}\label{majorx} Combining \eqref{minorA} with Inequality \eqref{estiray} we get
\begin{align}\label{ninfx}\|X\|_{\infty}\leqslant  C\bigl(p,q,n\bigr)A^{\gamma/n}(\Vol M)^{1/n}\end{align}
\end{remark}

\begin{lemma}\label{nldz} For any $0<\varepsilon<1$ if $(P_{p,\varepsilon})$ is satisfied, then there exist some positive functions $C(p,q,n)$, $\alpha(q,n)$ and $\beta(q,n)$ so that the vector field $Z=\nu-HX$ satisfies
\begin{align}\label{estizd}\|Z\|_r\leqslant C(p,q,n)(1+A)^{\beta} \varepsilon^\frac{\alpha(q-r)}{r(q-2)}\ \ \ \ \mbox{for any }r\in[2,q).\end{align}
\end{lemma}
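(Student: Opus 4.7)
The plan is in three stages: first derive a sharp bound on $\|Z\|_2^2$ using Hsiung's identity together with the pinching, then obtain a crude $\varepsilon$-independent bound on $\|Z\|_q$, and finally interpolate to cover the full range $r\in[2,q)$.

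For the $L^2$ estimate I will translate so that $\overline{X}=0$ and expand
$$|Z|^2 \;=\; |\nu|^2 - 2H\langle\nu,X\rangle + H^2|X|^2 \;=\; 1 - 2H\langle\nu,X\rangle + H^2|X|^2.$$
Integrating and invoking the normalized Hsiung identity from \eqref{hsiung}, which states that $\frac{1}{\Vol M}\int_M H\langle\nu,X\rangle\vol = 1$, the cross term contributes exactly $-2$, so $\|Z\|_2^2 = \|H|X|\|_2^2 - 1$. Inequality \eqref{estiray} (applied with $\overline{X}=0$) then gives the pointwise bound $|X|\leqslant (1+C\varepsilon^{\alpha})/\|H\|_2$, whence
$$\|H|X|\|_2^2 \;\leqslant\; \frac{(1+C\varepsilon^{\alpha})^2}{\|H\|_2^2}\cdot\|H\|_2^2 \;=\; (1+C\varepsilon^{\alpha})^2,$$
so that $\|Z\|_2^2 \leqslant C(p,q,n)(1+A)^{\beta}\varepsilon^{\alpha}$ for appropriate positive functions. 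For the $L^q$ bound I will use the trivial $|Z|\leqslant 1+|H|\cdot|X|$, together with the estimate $\|X\|_\infty\leqslant C(p,q,n)A^{\gamma/n}(\Vol M)^{1/n}$ from Remark \ref{majorx} and $\Vol M\|H\|_q^n\leqslant A$, to conclude that $\|Z\|_q\leqslant 1+\|H\|_q\|X\|_\infty\leqslant C(p,q,n)(1+A)^{\beta}$, independently of $\varepsilon$.

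To conclude I interpolate: writing $\tfrac{1}{r}=\tfrac{\theta}{2}+\tfrac{1-\theta}{q}$ gives $\theta=\tfrac{2(q-r)}{r(q-2)}$, and Lyapunov's inequality $\|Z\|_r\leqslant \|Z\|_2^{\theta}\|Z\|_q^{1-\theta}$ combined with the two previous bounds produces precisely the exponent $\tfrac{\alpha(q-r)}{r(q-2)}$ on $\varepsilon$ (after absorbing the factor $1/2$ coming from $\|Z\|_2=(\|Z\|_2^2)^{1/2}$ into the function $\alpha(q,n)$, and the residual exponent in the constant term into $\beta$). The only substantive step is the $L^2$ estimate, and its delicacy is entirely concentrated in Inequality \eqref{estiray}; once that is in hand, the Hsiung cancellation makes the bound immediate, and the remainder is routine interpolation.
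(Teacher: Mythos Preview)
Your proof is correct and follows essentially the same route as the paper: expand $|Z|^2$, use the integrated Hsiung identity to get $\|Z\|_2^2=\|HX\|_2^2-1$, bound this via \eqref{estiray}, control $\|Z\|_q$ crudely by $1+\|H\|_q\|X\|_\infty$ using Remark~\ref{majorx}, and then interpolate by H\"older. The only cosmetic difference is that the paper presents the interpolation step first and the $L^2$ estimate last.
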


\begin{proof} By the H\"older inequality we have for any $r\in[2,q)$
$$\displaylines{\|Z\|_r\leqslant\|Z\|_q^\frac{q(r-2)}{r(q-2)}\|Z\|_2^\frac{2(q-r)}{r(q-2)}\leqslant(1+\|X\|_{\infty}\|H\|_q)^{\frac{q(r-2)}{r(q-2)}}\|Z\|_2^{\frac{2(q-r)}{r(q-2)}}\cr
\leqslant(1+\|X\|_{\infty}\|H\|_q)^{\frac{q}{q-2}}\|Z\|_2^{\frac{2(q-r)}{r(q-2)}}}$$
By remark \ref{majorx}, we have $\|H\|_q\|X\|_{\infty}\leqslant C(p,q,n)A^\frac{\gamma+1}{n}$. Then
$$\|Z\|_r\leqslant C(p,q,n)A^{\beta}\|Z\|_2^{\frac{2(q-r)}{r(q-2)}}$$
Moreover by integrating the Hsiung-Minkowsky formula (\ref{hsiung}) we have
\begin{align*}\|Z\|_2^2&=1-\frac{2}{\Vol M}\insm H\scal{\nu}{X}\vol+\|HX\|_2^2\leqslant-1+\|H\|^2_2\|X\|_\infty^2.
\end{align*}
which, by Inequality \eqref{estiray}, gives
$\|Z\|^2_2\leqslant C(p,q,n)A^{\beta(q,n)}\varepsilon^{\alpha(q,n)}$.
\end{proof}

\section{Proof of Inequality \eqref{pinchcm}}\label{poip}

Since we have $1=\frac{1}{\Vol M}\int_MH\langle X,\nu\rangle\vol\leqslant\|H\|_2\|\langle X,\nu\rangle\|_2$, Inequality $(P_{p,\varepsilon})$ gives us
$$\displaylines{
\hfill\|X\|_2\leqslant(1+\varepsilon)\|\langle X,\nu\rangle\|_2,\hfill1\leqslant\|H\|_2\|X\|_2\leqslant1+\varepsilon,
\hfill}$$
and so
$$\displaylines{\hfill
\|X-\langle X,\nu\rangle\nu\|_2^{~}\leqslant\sqrt{3\varepsilon}\,\|X\|_2^{~},\hfill
\|X-\frac{H\nu}{\|H\|_2^2}\|_2^{~}=\sqrt{\|X\|_2^2-\|H\|_2^{-2}}\leqslant\sqrt{3\varepsilon}\,\|X\|_2^{~}.\hfill}$$
By Inequalities \eqref{estiray}, this gives
\begin{align*}\bigl\|H^2-\|H\|_2^2\bigr\|_1&\leqslant\bigl\|H^2-|X|^2\|H\|_2^4\bigr\|_1+\bigl\||X|^2\|H\|_2^4-\|H\|_2^2\bigr\|_1\\
&=\|H\|_2^4\Bigl(\bigl\|\frac{H^2}{\|H\|_2^4}-|X|^2\bigr\|_1+\bigl\||X|^2-\frac{1}{\|H\|_2^2}\bigr\|_1\Bigr)\\
&\leqslant\|H\|_2^4\Bigl(\bigl\|\frac{H\nu}{\|H\|_2^2}+X\bigr\|_2\bigl\|X-\frac{H\nu}{\|H\|_2^2}\bigr\|_2+\frac{CA^{\gamma/n}\varepsilon^{\alpha}}{\normdfonc{H}}\left(\normdfonc{X}+\frac{1}{\normdfonc{H}}\right)\Bigr)\\
&\leqslant\|H\|_2^4\Bigl(\sqrt{3\varepsilon}\|X\|_2(\|\frac{H\nu}{\|H\|_2^2}\|_2+\|X\|_2)+CA^{\gamma/n}\varepsilon^\alpha\frac{1}{\|H\|^2_2}\Bigr)\\
&\leqslant CA^{\gamma/n}\varepsilon^\alpha\|H\|_2^2
\end{align*}
Hence\  we\  have\ \  $\bigl\||H|-\|H\|_2\bigr\|_1\leqslant\frac{\|H^2-\|H\|_2^2\|_1^{~}}{\|H\|_2}\leqslant CA^{\gamma/n}\varepsilon^\alpha\|H\|_2$.
Moreover\ \  we\  have\ \  $\bigl\||H|-\|H\|_2\bigr\|_q\leqslant 2\|H\|_q\leqslant 2K(n)\|H\|_2(\Vol M)^\frac{1}{n}\|H\|_q$. Hence by the H\"older inequality, for any $r\in[1,q)$ we have
\begin{align*}\bigl\||H|-\|H\|_2\bigr\|_r&\leqslant\bigl(\bigl\||H|-\|H\|_2\bigr\|_1\bigr)^\frac{q-r}{r(q-1)}\bigl(\bigl\||H|-\|H\|_2\bigr\|_{q}\bigr)^\frac{q(r-1)}{r(q-1)}\\
&\leqslant C(p,q,r,n)A^{\beta(q,r,n)}\varepsilon^{\frac{\alpha(q-r)}{r(q-1)}}\|H\|_2\end{align*}

\section{Proof of the theorem \ref{2}}\label{pot2}

Let $u\in TM$ be a unit vector and put $\psi=|X^\top|$ where $X^\top$ is the tangential projection of $X$ on $TM$. For $\varepsilon$ small enough we have from \eqref{estiray}  $|X|\geqslant\frac{1}{2\normdfonc{H}}$ and then the application $F$ is well defined. We have $dF(u)=\frac{1}{\|H\|_2|X|}\bigl(u-\frac{\langle X,u\rangle}{|X|^2}X\bigr)$ (see \cite{colgros}), hence for any $\alpha\geqslant 1$
\begin{align}\label{quasisometry}||dF_x(u)|^2-1|&\leqslant\frac{1}{|X|^2}\left|\frac{1}{\normdfonc{H}^2}-|X|^2\right|+\frac{1}{\normdfonc{H}^2}\frac{\scal{u}{X}^2}{|X|^4}\\
&\leqslant\frac{C\varepsilon^{\alpha}}{|X|^2\normdfonc{H}^2}+\frac{\|\psi\|_{\infty}^2}{\normdfonc{H}^2|X|^4}\notag
\end{align}
Now an easy computation using  \ref{estiray} shows that $|d\psi|\leqslant|\scal{X}{\nu}B-g|\leqslant\frac{CA^{\beta}}{\normdfonc{H}}|B|+n$. Now using the Sobolev inequality \ref{simon} and the fact that $\gamma_n\leqslant(\Vol M)^{1/n}\normdfonc{H}\leqslant(\Vol M)^{1/n}\|H\|_q\leqslant(\Vol M)^{1/n}\|B\|_q\leqslant A^{1/n}$ (see \ref{minorA}), we have
\begin{align*}\|\psi\|_{\frac{2\alpha n}{n-1}}^{2\alpha}&\leqslant K(n)(\Vol M)^{1/n}\Bigl(2\alpha\frac{CA^{\beta}}{\normdfonc{H}}\|B\|_q+2\alpha n+\|H\|_q\|\psi\|_{\infty}\Bigr)\|\psi\|_\frac{(2\alpha-1)q}{q-1}^{2\alpha-1}\\
&\leqslant K(n)\left(2\alpha CA^{\beta}(\Vol M)^{1/n}+A^{1/n}\|\psi\|_{\infty}\right)\|\psi\|_\frac{(2\alpha-1)q}{q-1}^{2\alpha-1}
\end{align*}
And similarly to the proof of the theorem \ref{diambound} we obtain
$$\|\psi\|_{\infty}\leqslant C(q,n)\bigl(\frac{(\Vol M)^{1/n}CA^{\beta}}{\|\psi\|_{\infty}}+A^{1/n}\bigr)^{\gamma}\normdfonc{\psi}$$
And using the fact that $\|\psi\|_{\infty}\normdfonc{H}\leqslant\|X\|_{\infty}\normdfonc{H}\leqslant 1+C$ and $A\geqslant\gamma_n$ we get $\|\psi\|_{\infty}\leqslant CA^{\beta}\left(\frac{1}{\|\psi\|_{\infty}\normdfonc{H}}\right)^{\gamma}\normdfonc{\psi}$ that is
$$\|\psi\|_{\infty}^{\gamma+1}\leqslant CA^{\beta}\frac{\normdfonc{\psi}}{\normdfonc{H}^{\gamma}}$$
Now since $\normdfonc{\psi}=\normdfonc{X-\scal{X}{\nu}\nu}\leqslant\sqrt{3\varepsilon}\normdfonc{X}$ and $\normdfonc{H}\normdfonc{X}\leqslant 1+\varepsilon$ we deduce that $\|\psi\|_{\infty}\leqslant\frac{CA^{\beta}}{\normdfonc{H}}\varepsilon^{\alpha(q,n)}$. And reporting this in \eqref{quasisometry} and using \eqref{estiray} with the fact that $|X|\geqslant\frac{1}{2\normdfonc{H}}$ we get $||dF_x(u)|^2-1|\leqslant CA^{\beta}\varepsilon^{\alpha(q,n)}$.

\section{Homogeneous, harmonic polynomials of degree $k$}\label{Homog}

Let $\hkr$ be the space of homogeneous, harmonic polynomials of degree $k$ on $\R^{n+1}$. Note that $\hkr$ induces on $\S^n$ the spaces of eigenfunctions of $\Delta^{\S^n}$ associated to the eigenvalues $\muks:=k(n+k-1)$ with  multiplicity $m_k:=\begin{pmatrix}n+k-1\\ k\end{pmatrix}\displaystyle\frac{n+2k-1}{n+k-1}$ (see \cite{BGM}). 

On the space $\hkr$, we define the following inner product
$$\scalpoly{P}{Q}:=\frac{1}{\Vol\S^n}\inssn PQ dv_{\can},$$
where $dv_{can}$ denotes the element volume of the sphere with its standard metric. On the other hand the inner product on $M$ will be defined by 
$$\scalfonc{f}{g}=\insm \frac{fg\vol}{\Vol M}\ \ \ \mbox{ for }f,g\in C^{\infty}(M).$$

In this section we give some estimates on harmonic homogeneous polynomials needed subsequently. We set $(P_1,\cdots,P_{m_k})$ an arbitrary orthonormal basis of $\hkr$.
Remind that for any $P\in\hkr$ and any  $Y\in\R^{n+1}$, we have $dP(X)=kP(X)$ and $\nabla^0 dP(X,Y)=(k-1)dP(Y)$.

\begin{lemma}\label{Pcarre}
For any $x\in\R^{n+1}$, we have ${\displaystyle\sum_{i=1}^{m_k}}P_i^2(x)=m_k|x|^{2k}$.
\end{lemma}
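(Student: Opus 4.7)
The plan is to exploit the fact that the diagonal of the reproducing kernel of $\hkr$ (with respect to the given inner product) is invariant under the orthogonal group, together with the homogeneity of the $P_i$.

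First I would observe that the quantity $K(x):=\sum_{i=1}^{m_k}P_i^2(x)$ does not depend on the choice of orthonormal basis of $\hkr$: if $(Q_j)$ is another orthonormal basis, the change-of-basis matrix is orthogonal, which preserves $\sum P_i(x)^2$ pointwise. Next, since each $P_i$ is homogeneous of degree $k$, one has $P_i(x)=|x|^kP_i(x/|x|)$ for $x\neq 0$, so $K(x)=|x|^{2k}K(x/|x|)$. It therefore suffices to prove $K(\xi)=m_k$ for every $\xi\in\S^n$.

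For the rotation invariance, I would use the fact that the inner product $\scalpoly{\cdot}{\cdot}$ is invariant under the natural action of $O(n+1)$ on $\hkr$ (since $dv_{\can}$ is rotation invariant). Thus if $R\in O(n+1)$, the family $(P_i\circ R)$ is again an orthonormal basis of $\hkr$, and by the basis-independence of the first step,
\begin{equation*}
K(R\xi)=\sum_{i=1}^{m_k}P_i(R\xi)^2=\sum_{i=1}^{m_k}(P_i\circ R)(\xi)^2=K(\xi).
\end{equation*}
Hence $K$ is constant on $\S^n$, equal to some $c_k$.

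Finally I would evaluate $c_k$ by integrating over $\S^n$ against the normalized volume:
\begin{equation*}
c_k=\frac{1}{\Vol\S^n}\inssn K\,dv_{\can}=\sum_{i=1}^{m_k}\scalpoly{P_i}{P_i}=m_k,
\end{equation*}
which gives $K(\xi)=m_k$ on $\S^n$ and, combined with the homogeneity identity above, $K(x)=m_k|x|^{2k}$ for every $x\in\R^{n+1}$. The only mildly delicate point is the verification that the $O(n+1)$-action preserves the inner product, but this is immediate from the rotation invariance of the spherical measure, so there is no real obstacle.
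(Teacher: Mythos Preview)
Your proof is correct and follows essentially the same route as the paper: both arguments show that $\sum_i P_i^2$ is constant on $\S^n$ by $O(n{+}1)$-invariance (the paper phrases this via the trace of the quadratic form $Q_x(P)=P(x)^2$, you via basis-independence of the reproducing-kernel diagonal), then integrate to identify the constant as $m_k$ and conclude by homogeneity.
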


\begin{proof} For any $x\in\S^n$, $Q_x(P)=P^2(x)$ is a quadratic form on $\hkr$ whose trace is given by $\sum_{i=1}^{m_k}P_i^2(x)$. Since for any $x'\in\S^n$ and any $O\in O_{n+1}$ such that $x'=Ox$ we have $Q_{x'}(P)=Q_x(P\circ O)$ and since $P\mapsto P\circ O$ is an isometry of $\hkr$, we have $ \sum_{i=1}^{m_k}P_i^2(x)=\tr(Q_x)=\sum_{i=1}^{m_k}P_i^2(x')=\tr( Q_{x'})$. Now
$$\sum_{i=1}^{m_k}\frac{1}{\Vol\S^n}\inssn P_i^2(x)\vol=m_k=\frac{1}{\Vol\S^n}\inssn\left(\sum_{i=1}^{m_k}P_i^2(x)\right)\vol$$
and so $\sum_{i=1}^{m_k}P_i^2(x)=m_k$. We conclude by homogeneity of the $P_i$.
\end{proof}

As an immediate consequence, we have the following lemma.

\begin{lemma}\label{grad}
For any $x,u\in\R^{n+1}$, we have
$$\sum_{i=1}^{m_k}\bigl(d_xP_i(u)\bigr)^2=m_k\left(\frac{\muks}{n}|x|^{2(k-1)}|u|^2+\left(k^2-\frac{\muks}{n}\right)\langle u,x\rangle^2|x|^{2(k-2)}\right).$$
\end{lemma}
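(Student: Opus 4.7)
The plan is to mimic the strategy used for Lemma \ref{Pcarre}: exploit orthogonal invariance to pin down the form of the sum, then compute two scalar quantities to identify the coefficients.

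First I would set $S(x,u):=\sum_{i=1}^{m_k}\bigl(d_xP_i(u)\bigr)^2$ and observe that $S$ is independent of the chosen orthonormal basis of $\hkr$. Since $P\mapsto P\circ O$ is an isometry of $\hkr$ for every $O\in O_{n+1}$, the function $S$ is $O(n+1)$-invariant in the sense $S(Ox,Ou)=S(x,u)$. Moreover $S$ is a polynomial which is homogeneous of bidegree $(2(k-1),2)$ in $(x,u)$. The $O(n+1)$-invariants of two vectors are generated by $|x|^2$, $|u|^2$ and $\langle x,u\rangle$, so the bidegree constraint forces
$$S(x,u)=a|x|^{2(k-1)}|u|^2+b|x|^{2(k-2)}\langle x,u\rangle^2$$
for two universal constants $a,b$ depending only on $n,k$.

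Next I would evaluate $S$ at two special configurations to recover $a$ and $b$. Taking $u=x$ and using Euler's identity $d_xP_i(x)=kP_i(x)$ together with Lemma \ref{Pcarre} gives
$$S(x,x)=k^2\sum_{i=1}^{m_k}P_i^2(x)=k^2m_k|x|^{2k},$$
hence $a+b=k^2m_k$. For the second relation I would apply the Euclidean Laplacian in $u$: for each $i$, $(d_xP_i(u))^2=\langle\nabla^0 P_i(x),u\rangle^2$ is a quadratic form in $u$ with trace $2|\nabla^0P_i(x)|^2$, so
$$\Delta^0_u S(x,u)=2\sum_{i=1}^{m_k}|\nabla^0P_i(x)|^2.$$
On the other hand, differentiating the ansatz directly yields $\Delta^0_uS(x,u)=(2(n+1)a+2b)|x|^{2(k-1)}$. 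To evaluate the right-hand side, I would use $\Delta^0(P_i^2)=2|\nabla^0P_i|^2$ (since $P_i$ is harmonic) and Lemma \ref{Pcarre}, so
$$2\sum_{i=1}^{m_k}|\nabla^0 P_i(x)|^2=\Delta^0\Bigl(\sum_{i=1}^{m_k}P_i^2\Bigr)=m_k\,\Delta^0|x|^{2k}=2m_kk(2k+n-1)|x|^{2(k-1)},$$
where I used the standard formula $\Delta^0|x|^s=s(s+n-1)|x|^{s-2}$ in $\R^{n+1}$. This gives $(n+1)a+b=m_kk(2k+n-1)$.

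Finally I would solve the linear system $a+b=k^2m_k$ and $(n+1)a+b=m_kk(2k+n-1)$. Subtracting gives $na=m_k\bigl(k(2k+n-1)-k^2\bigr)=m_kk(k+n-1)=m_k\muks$, so $a=m_k\muks/n$, and consequently $b=m_k(k^2-\muks/n)$, which is precisely the announced formula. The only delicate point is the structural claim in the first step; the rest is a short algebraic computation. If the invariant-theoretic justification feels abrupt one can replace it by a direct check using an explicit orthogonal change of coordinates that sends $x$ to $|x|e_0$ and $u$ into the plane $\mathrm{span}(e_0,e_1)$, which reduces $S(x,u)$ to a polynomial in $|x|,|u|$ and $\langle x,u\rangle$ of the correct bidegree.
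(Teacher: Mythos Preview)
Your argument is correct. Both your proof and the paper's rest on $O(n+1)$-invariance, but they extract the coefficients differently. The paper first restricts to $x\in\S^n$ and $u\in x^\perp\cap\S^n$, uses transitivity of $O_{n+1}$ on orthonormal pairs to see that $S(x,u)$ is constant there, and computes this constant by averaging over an orthonormal basis of $x^\perp$ and integrating over $\S^n$ (using that the $P_i$ are $\Delta^{\S^n}$-eigenfunctions); it then handles a general $u$ by the decomposition $u=v+\langle u,x\rangle x$, Euler's identity, and the derivative of Lemma~\ref{Pcarre} to kill the cross term. You instead invoke the First Fundamental Theorem for $O(n+1)$ to get the two-parameter ansatz directly, and then fix the coefficients by two purely Euclidean evaluations (Euler at $u=x$, and $\Delta^0_u$ combined with $\Delta^0(\sum P_i^2)=m_k\Delta^0|x|^{2k}$). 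Your route is slightly more algebraic and avoids the integration over $\S^n$; the paper's route is more hands-on and needs only transitivity rather than the full invariant-theory statement. One small remark: for $k=1$ the monomial $|x|^{2(k-2)}\langle x,u\rangle^2$ is not available in the invariant-theoretic ansatz, but this is harmless since in that case the bidegree forces $S(x,u)=a|u|^2$ and your two equations consistently give $a=m_1$, matching the stated formula where the second coefficient $k^2-\mu_k/n$ vanishes.
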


\begin{proof} Let $x\in\S^n$ and $u\in\S^n$  so that $\langle u,x\rangle=0$. Once again the quadratic forms $Q_{x,u}(P)=\bigl(d_x P(u)\bigr)^2$ are conjugate (since $O_{n+1}$ acts transitively on orthonormal couples) and so $\displaystyle\sum_{i=1}^{m_k}\bigl(d_xP_i(u)\bigr)^2$ does not depend on $u\in x^{\perp}$ nor on $x\in\S^n$. By choosing an orthonormal basis $(u_j)_{1\leqslant j\leqslant n}$ of $x^\perp$, we obtain that
\begin{align*}\sum_{i=1}^{m_k}\bigl(d_x P_i(u)\bigr)^2&=\frac{1}{n}\sum_{i=1}^{m_k}\sum_{j=1}^n\bigl(d_xP_i(u_j)\bigr)^2=\frac{1}{n\Vol\S^n}\int_{\S^n}\sum_{i=1}^{m_k}|\nabla^{\S^n}P_i|^2\\
&=\frac{1}{n\Vol\S^n}\int_{\S^n}\sum_{i=1}^{m_k}P_i\Delta^{\S^n}P_i=\frac{m_k\muks}{n}
\end{align*}
Now suppose that $u\in\R^{n+1}$. Then $u=v+\langle u,x\rangle x$, where $v=u-\langle u,x\rangle x$, and we have
\begin{align*}\sum_{i=1}^{m_k}\bigl(d_xP_i(u)\bigr)^2&=\sum_{i=1}^{m_k}\bigl(d_xP_i(v)+k\langle u,x\rangle P_i(x)\bigr)^2\\
&=\sum_{i=1}^{m_k}\bigl(d_xP_i(v)\bigr)^2+2k\langle u,x\rangle\sum_{i=1}^{m_k}d_xP_i(v)P_i(x)+m_k\langle u,x\rangle^2 k^2\\
&=\frac{m_k\muks}{n}|v|^2+m_k\langle u,x\rangle^2 k^2=m_k\left(\frac{\muks}{n}|u|^2+\left(k^2-\frac{\muks}{n}\right)\langle u,x\rangle^2 \right),
\end{align*}
where we derived the equality in Lemma \ref{Pcarre} to make $\displaystyle\sum_{i=1}^{m_k}d_xP_i(v)P_i(x)$ disappear. We conclude by homogeneity of $P_i$.
\end{proof}

\begin{lemma}\label{Hess} For any $x\in\R^{n+1}$, we have ${\displaystyle\sum_{i=1}^{m_k}}|\nabla^0dP_i(x)|^2=m_k\alpha_{n,k}|x|^{2(k-2)}$, where $\alpha_{n,k}=(k-1)(k^2+\mu_k)(n+2k-3)\leqslant C(n)k^4$.
\end{lemma}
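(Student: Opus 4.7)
The plan is to mimic the trace/invariance argument used in Lemmas \ref{Pcarre} and \ref{grad}, then pin down the constant by one integration over $\S^n$ using the formula \eqref{fondhess} (applied to the canonical sphere, where $\nu$ is the position vector and $H=1$).

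First I would check that $\Psi(x):=\sum_{i=1}^{m_k}|\nabla^0 dP_i(x)|^2$ is $O_{n+1}$-invariant. For $O\in O_{n+1}$ one has $\nabla^0 d(P\circ O)|_x(u,v)=\nabla^0 dP|_{Ox}(Ou,Ov)$, so $|\nabla^0 d(P\circ O)|_x|^2=|\nabla^0 dP|_{Ox}|^2$. Since $P\mapsto P\circ O$ is an isometry of $(\hkr,(\cdot,\cdot)_{\S^n})$, the family $\{P_i\circ O\}$ is again an orthonormal basis, hence $\Psi(Ox)=\Psi(x)$. Combined with the fact that $\nabla^0 dP_i$ is homogeneous of degree $k-2$, this forces $\Psi(x)=c_{n,k}|x|^{2(k-2)}$ for some constant $c_{n,k}$. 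It remains to compute $c_{n,k}$.

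To identify $c_{n,k}$ I would integrate $\Psi$ over $\S^n$, yielding $c_{n,k}\Vol\S^n=\sum_i\int_{\S^n}|\nabla^0 dP_i|^2\,dv_{\can}$. For a fixed $P\in\hkr$, I apply \eqref{fondhess} on $M=\S^n$ (so $\nu$ is the position vector and $H=1$) to the function $F=|dP|^2$. This requires three elementary computations:
\begin{itemize}
\item $F$ is homogeneous of degree $2(k-1)$, so Euler's identity gives $dF(\nu)=2(k-1)F$.
\item Differentiating Euler's identity in the same way as in the paragraph before Lemma \ref{Pcarre} yields $\nabla^0 dF(\nu,\nu)=(2k-3)dF(\nu)=2(k-1)(2k-3)F$.
\item A direct expansion of $\sum_b \partial_b^2\bigl(\sum_a(\partial_aP)^2\bigr)$, together with $\sum_b\partial_b^2P=0$ (because $P$ is harmonic), gives $\sum_b\partial_b^2 F=2|\nabla^0 dP|^2$, hence (with the positive Laplacian convention used in \eqref{fondhess}, as is apparent from \eqref{hsiung} applied to $|X|^2$) $\Delta^0 F=-2|\nabla^0 dP|^2$.
\end{itemize}
Plugging into \eqref{fondhess} and integrating against the zero-mean identity $\int_{\S^n}\Delta^{\S^n}F\,dv_{\can}=0$ gives, after cancellation,
$$\int_{\S^n}|\nabla^0 dP|^2\,dv_{\can}=(k-1)(n+2k-3)\int_{\S^n}|dP|^2\,dv_{\can}.$$

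The last ingredient is $\int_{\S^n}|dP|^2\,dv_{\can}$. Splitting the Euclidean gradient into its tangent and normal parts, the normal component is $dP(\nu)=kP$ by Euler, so $|dP|^2=|\nabla^{\S^n}P|^2+k^2P^2$; integrating and using $\int|\nabla^{\S^n}P|^2=\mu_k\int P^2$ gives $\int|dP|^2=(k^2+\mu_k)\int P^2$. Summing over the orthonormal basis yields $\sum_i\int_{\S^n}P_i^2\,dv_{\can}=m_k\Vol\S^n$, and therefore $c_{n,k}=m_k(k-1)(k^2+\mu_k)(n+2k-3)=m_k\alpha_{n,k}$. The estimate $\alpha_{n,k}\leqslant C(n)k^4$ is immediate from $\mu_k=k(n+k-1)\leqslant C(n)k^2$. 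The only technical pitfall is tracking sign conventions when translating between the Euclidean Laplacian $\Delta^0$ and the raw second-derivative operator $\sum_b\partial_b^2$; this is settled once and for all by the paper's own calibration of \eqref{fondhess} on $|X|^2$.
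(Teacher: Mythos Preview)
Your argument is correct. The paper, however, takes a shorter and purely pointwise route: it applies the Euclidean Bochner identity
\[
|\nabla^0 dP_i|^2=\langle d\Delta^0 P_i,dP_i\rangle-\tfrac{1}{2}\Delta^0|dP_i|^2
\]
to each $P_i$, sums over $i$, drops the first term (the $P_i$ are harmonic), and then uses Lemma~\ref{grad} to recognise $\sum_i|dP_i|^2=m_k(k^2+\mu_k)|X|^{2(k-1)}$ pointwise; one application of $\Delta^0$ to $|X|^{2(k-1)}$ finishes the computation. Your ``direct expansion'' $\sum_b\partial_b^2|dP|^2=2|\nabla^0 dP|^2$ is exactly this Bochner identity, so the two proofs share the same core ingredient; the difference is that the paper exploits the pointwise formula for $\sum_i|dP_i|^2$ already available from Lemma~\ref{grad} and thereby avoids both the $O_{n+1}$-invariance step and the integration over $\S^n$ via \eqref{fondhess}. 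Your approach has the merit of being logically independent of Lemma~\ref{grad} (you only need $\sum_i\int_{\S^n}P_i^2=m_k\Vol\S^n$), while the paper's buys a two-line proof.
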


\begin{proof} The Bochner equality gives
$$\displaylines{
\sum_{i=1}^{m_k}|\nabla^0dP_i(x)|^2=\sum_{i=1}^{m_k}\left(\langle d\Delta^0 P_i,dP_i\rangle-\frac{1}{2}\Delta^0\bigl|dP_i\bigr|^2\right)\hfill\cr
=-\frac{1}{2}m_k\bigl(k^2+\mu_k\bigr)\Delta^0|X|^{2k-2}=m_k\alpha_{n,k}|X|^{2k-4}
}$$
\end{proof}

Let $\hkm=\{P\circ X\ ,\ P\in\hkr\}$ be the space of functions induced on $M$ by $\hkr$. We will identify $P$ and $P\circ X$ subsequently. There is no ambiguity since we have

\begin{lemma} Let $M^n$ be a compact manifold immersed by $X$ in $\R^{n+1}$ and let $(P_1,\ldots,P_m)$ be a linearly independent set of homogeneous polynoms of degree $k$ on $\R^{n+1}$. Then the set $(P_1\circ X,\ldots,P_m\circ X)$ is also linearly independent.
\end{lemma}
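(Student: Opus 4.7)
The statement reduces to showing that if $P = \sum_i a_i P_i$ is a homogeneous polynomial of degree $k$ on $\R^{n+1}$ with $P \circ X \equiv 0$ on $M$, then $P \equiv 0$. My plan is to prove that the cone $C(X(M)) = \{tX(q) : q \in M,\, t \in \R\}$ has non-empty interior in $\R^{n+1}$; since $P$ vanishes on $X(M)$ by hypothesis and hence on $C(X(M))$ by homogeneity of degree $k$, this forces $P$ to be the zero polynomial (a polynomial vanishing on a non-empty open subset of $\R^{n+1}$ is identically zero).

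To produce interior points of $C(X(M))$, I would consider the smooth map $\phi : M \times \R \to \R^{n+1}$ defined by $\phi(q,t) = tX(q)$. Its differential at $(p,1)$ sends $(v,s) \in T_pM \times \R$ to $dX_p(v) + sX(p)$, which is surjective onto $\R^{n+1}$ precisely when $X(p) \notin dX_p(T_pM)$. At such a point, $\phi$ is a submersion near $(p,1)$ and its image contains an open neighborhood of $X(p)$, so $C(X(M))$ has interior. The crux is therefore to exhibit one point $p \in M$ for which the radial vector $X(p)$ is not tangent to $X(M)$ at $X(p)$.

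This existence is where compactness enters, and I expect it to be the main obstacle. Suppose for contradiction that $X(p) \in dX_p(T_pM)$ for every $p \in M$; since $X$ is an immersion, $dX_p$ is injective and there is a unique preimage $V_p \in T_pM$ with $dX_p(V_p) = X(p)$, defining a smooth vector field $V$ on $M$. For any integral curve $\gamma$ of $V$ we have $(X \circ \gamma)'(t) = dX_{\gamma(t)}(V_{\gamma(t)}) = X(\gamma(t))$, and integrating gives $X(\gamma(t)) = e^{t}\, X(\gamma(0))$. Compactness of $X(M)$ forces this orbit to remain bounded for all $t \in \R$, hence $X(\gamma(0)) = 0$ for every starting point, i.e.\ $X \equiv 0$ on $M$, which contradicts $X$ being an immersion of an $n$-dimensional manifold. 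Once this transverse point is produced, the submersion argument and the polynomial-vanishing-on-open-set principle close the proof immediately.
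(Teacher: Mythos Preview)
Your proof is correct and follows the same strategy as the paper: show that the cone over $X(M)$ has non-empty interior by exhibiting a point where the position vector is transverse to the tangent space, then conclude because a polynomial vanishing on an open set of $\R^{n+1}$ is identically zero. The only difference is in how you justify the existence of the transverse point: the paper simply asserts it from compactness (the intended one-line argument being to take $p$ maximizing $|X|^2$, so that $X(p)\neq 0$ is orthogonal to $dX_p(T_pM)$), whereas you give a longer dynamical argument via the flow of the tangential vector field $V$ and the exponential blow-up $X(\gamma(t))=e^tX(\gamma(0))$. Both are valid; yours is more elaborate than necessary but perfectly sound.
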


\begin{proof} Any homogeneous polynomial $P$ which is zero on $M$ is zero on the cone $\R^+{\cdot} M$. Since $M$ is compact there exists a point $x\in M$ so that $X_x\notin T_xM$ and so $\R^+{\cdot}M$ has non empty interior. Hence $P\circ X=0$ implies $P=0$.
\end{proof}

Formula \eqref{fondhess} implies
\begin{equation}\label{laplap}\Delta P=\muks H^2 P+(n+2k-2)HdP(Z)+\nabla^0 dP(Z,Z)\end{equation}
In order to estimate $\Delta P$, we define two linear maps 
$$
\begin{array}{rcl}
V_k^{\star}: \hkr & \longrightarrow & C^{\infty}(M)\\
P & \longmapsto & dP(V)
\end{array}
$$
and
$$
\begin{array}{rcl}
(V,W)_k^{\star}  :  \hkr & \longrightarrow & C^{\infty}(M)\\
P & \longmapsto & \nabla^0 dP(V,W)
\end{array}$$
where $V,W\in\Gamma(M)$ are vector fields.

If $L :\hkr \longrightarrow C^{\infty}(M)$ is a linear map, we set 
$$\normdappli{L}^2=\sum_{i=1}^{m_k}\normdfonc{L(P_i)}^2,$$
where $(P_1,\cdots,P_{m_k})$ is an orthonormal basis of $(\hkr,\normdpoly{\ .\ })$.
\begin{remark}\label{remark}
 For any $P\in\hkr$, we have $\|L(P)\|_2^2\leqslant\normdappli{L}^2\|P\|_{\S^n}^2$.
\end{remark}

We now give some estimates on $Z_k^{\star}$, $(HZ)_k^{\star}$ and $(Z,Z)_k^{\star}$.

\begin{lemma}\label{estim} We have
\begin{align}\label{normz}\normdappli{Z_k^{\star}}^2\leqslant  \frac{m_kk^2}{\Vol M}\insm|X|^{2(k-1)}|Z|^2\vol
\end{align}
\begin{align}\label{normhz}\normdappli{(HZ)_k^{\star}}^2\leqslant  \frac{m_kk^2}{\Vol M}\insm |X|^{2(k-1)}H^2|Z|^2\vol
\end{align}
\begin{align}\label{normzz}\normdappli{(Z,Z)_k^{\star}}^2\leqslant  \frac{m_k\alpha_{k,n}}{\Vol M}\insm |X|^{2(k-2)}|Z|^4\vol
\end{align}
\end{lemma}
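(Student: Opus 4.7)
The three estimates are all of the form $\normdappli{L}^2 = \sum_{i=1}^{m_k}\normdfonc{L(P_i)}^2 = \frac{1}{\Vol M}\int_M \sum_{i=1}^{m_k} L(P_i)^2\, \vol$, so the plan in each case is to bring the sum inside the integral and bound $\sum_i L(P_i)^2$ pointwise using one of the identities from Lemmas \ref{Pcarre}, \ref{grad}, and \ref{Hess}.

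For \eqref{normz}, I apply Lemma \ref{grad} with $u=Z$ (at the point $x=X$) to obtain the pointwise identity
\[
\sum_{i=1}^{m_k}\bigl(dP_i(Z)\bigr)^2 = m_k\Bigl(\frac{\mu_k}{n}|X|^{2(k-1)}|Z|^2 + \bigl(k^2 - \tfrac{\mu_k}{n}\bigr)\langle Z,X\rangle^2 |X|^{2(k-2)}\Bigr).
\]
The coefficient $k^2-\mu_k/n = k(k-1)(n-1)/n$ is nonnegative, so I can bound $\langle Z,X\rangle^2 \leqslant |Z|^2|X|^2$ to collapse both terms into a single contribution, giving the clean pointwise bound $\sum_i (dP_i(Z))^2 \leqslant m_k k^2 |X|^{2(k-1)}|Z|^2$. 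Integration yields \eqref{normz}.

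Estimate \eqref{normhz} is essentially the same calculation: $L(P_i)^2 = H^2 (dP_i(Z))^2$, so I factor $H^2$ out of the pointwise sum and apply the bound just derived for $\sum_i (dP_i(Z))^2$. For \eqref{normzz}, I first use Cauchy--Schwarz applied to the symmetric bilinear form $\nabla^0 dP$: writing $\nabla^0 dP(Z,Z) = \sum_{ab}(\nabla^0 dP)_{ab} Z_a Z_b$ gives $(\nabla^0 dP(Z,Z))^2 \leqslant |\nabla^0 dP|^2 |Z|^4$. Summing over the orthonormal basis and invoking Lemma \ref{Hess} produces $\sum_i (\nabla^0 dP_i(Z,Z))^2 \leqslant m_k \alpha_{n,k} |X|^{2(k-2)} |Z|^4$, and integration gives \eqref{normzz}.

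There is no serious obstacle here: the three bounds follow mechanically once the pointwise $O(n{+}1)$-invariant identities of the preceding lemmas are in hand. The only mildly delicate point is verifying the sign $k^2 \geqslant \mu_k/n$ in Lemma \ref{grad} so that one may bound both terms at once by $m_k k^2 |X|^{2(k-1)}|Z|^2$; this follows from $\mu_k/n = k + k(k-1)/n \leqslant k + k(k-1) = k^2$.
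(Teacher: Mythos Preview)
Your proof is correct and follows the same approach as the paper: bring the sum over the orthonormal basis inside the integral and apply the pointwise identities of Lemmas~\ref{grad} and~\ref{Hess}, together with Cauchy--Schwarz. You have actually supplied more detail than the paper does, explicitly verifying the sign $k^2\geqslant\mu_k/n$ and the Cauchy--Schwarz step for the Hessian, both of which the paper leaves implicit.
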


\begin{proof} Let $(P_1,\cdots,P_{m_k})$  be an orthonormal basis of $\hkr$. By Lemma \ref{grad} we have
\begin{align*}\normdappli{Z_k^{\star}}^2&=\sum_{i=1}^{m_k}\normdfonc{dP_i(Z)}^2\leqslant \frac{m_kk^2}{\Vol M}\insm|X|^{2(k-1)}|Z|^2\vol\end{align*}
and
\begin{align*}\normdappli{(HZ)_k^{\star}}^2&=\sum_{i=1}^{m_k}\normdfonc{HdP_i(Z)}^2\leqslant \frac{m_kk^2}{\Vol M}\insm |X|^{2(k-1)}H^2|Z|^2\vol\end{align*}
By Lemma \ref{Hess}, we have
\begin{align*}\normdappli{(Z,Z)_k^{\star}}^2&=\sum_{i=1}^{m_k}\normdfonc{\nabla^0 dP_i(Z,Z)}^2\leqslant \frac{m_k\alpha_{k,n}}{\Vol M}\insm |X|^{2(k-2)}|Z|^4\vol,\end{align*}
which ends the proof.
\end{proof}

\begin{lemma}\label{Ppresquortho}  Let $q>n$ and $A>0$ be some reals. There exist a constants $C=C(q,n)$ and $\beta(q,n)$ such that for any isometrically immersed hypersurface $M$ of $\R^{n+1}$ which satisfies $\Vol M\|H\|_q^n\leqslant A$ and any $P\in\hkm$, we have
$$\Bigl|\|H\|_2^{2k}\normdfonc{P}^2-\normdpoly{P}^2\Bigr|\leqslant D\sigma_k(CA^{\beta})^{2k-1}\normdpoly{P}^2$$
where $D=\bigl\|H^2-\|H\|_2^{2}\bigr\|_1\|X\|^{2}_\infty+\bigl\|HZ\bigr\|_2\|X\|_\infty+\|Z\|_2^2+\bigl\|Z\bigr\|_4^2$.
\end{lemma}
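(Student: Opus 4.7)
The plan is an induction on the degree $k$, with trivial base case $k=0$. For the step from $k-1$ to $k$, the core algebraic identity comes from computing $\insm P\Delta P\vol$ in two ways: formula \eqref{laplap} gives a decomposition with $\muks H^2 P^2$ plus $Z$-dependent terms, while $\insm P\Delta P\vol = \insm|\nabla^M P|^2\vol$ combined with $|\nabla^M P|^2 = |\nabla^0 P|^2 - dP(\nu)^2$ and $dP(\nu) = kHP + dP(Z)$ (from $\nu = HX+Z$ and $dP(X)=kP$) yields a second expression. Equating produces the key identity
\begin{equation*}
\tfrac{1}{\Vol M}\insm|\nabla^0 P|^2\vol = (\muks+k^2)\normdfonc{HP}^2 + (n+4k-2)\scalfonc{HP}{dP(Z)} + \normdfonc{dP(Z)}^2 + \scalfonc{P}{\nabla^0 dP(Z,Z)}.
\end{equation*}

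I would then decompose $|\nabla^0 P|^2 = \sum_{a=1}^{n+1}(\partial_a P)^2$. Since $P$ is harmonic, each $\partial_a P$ is a homogeneous harmonic polynomial of degree $k-1$, so the induction hypothesis applies; the parallel identity on $\S^n$ gives $\sum_a \normdpoly{\partial_a P}^2 = (\muks+k^2)\normdpoly{P}^2$. Summing the inductive bounds over $a$, substituting the key identity, and swapping $\normdfonc{HP}^2$ for $\|H\|_2^2\normdfonc{P}^2$ (with error $\leqslant \|H^2-\|H\|_2^2\|_1\|P\|_\infty^2$) extracts $\|H\|_2^{2k}\normdfonc{P}^2-\normdpoly{P}^2$ on the left and produces, on the right, the inductive error $D\sigma_{k-1}(CA^\beta)^{2k-3}\normdpoly{P}^2$ plus four remainders matching the four summands of $D$. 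Each remainder is controlled by Lemma \ref{estim} combined with the pointwise bound $\|P\|_\infty \leqslant \sqrt{m_k}\|X\|_\infty^k\normdpoly{P}$ (an immediate consequence of Lemma \ref{Pcarre} and Cauchy--Schwarz); each contributes a weight of order $C(n)m_k\|X\|_\infty^{2(k-1)}\normdpoly{P}^2$, which, after multiplication by $\|H\|_2^{2(k-1)}/(\muks+k^2)$, becomes $C(n)m_k(\|X\|_\infty\|H\|_2)^{2(k-1)}\normdpoly{P}^2$.

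To close the induction I would bound $\|X\|_\infty\|H\|_2 \leqslant CA^\beta$ using Theorem \ref{diambound} and Remark \ref{majorx} in the operative regime, obtaining a total bound
\begin{equation*}
D\bigl[\sigma_{k-1}(CA^\beta)^{2k-3} + C(n)m_k(CA^\beta)^{2(k-1)}\bigr]\normdpoly{P}^2 \leqslant D\sigmk(CA^\beta)^{2k-1}\normdpoly{P}^2
\end{equation*}
upon enlarging $C=C(q,n)$ to absorb $C(n)$ (using $CA^\beta\geqslant 1$, guaranteed by the lower bound on $A$ from Sobolev, Inequality \eqref{minorA}) and applying $\sigmk = \sigma_{k-1}+m_k$. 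The main obstacle is the combinatorial bookkeeping — ensuring the $k$-dependent prefactors $\muks$, $\alpha_{k,n}$, $m_k$ coming out of Lemma \ref{estim} combine with $\sigma_{k-1}$ to produce exactly the target exponent $2k-1$ at stage $k$, while simultaneously absorbing the geometric factor $(\|X\|_\infty\|H\|_2)^{2(k-1)}$ into $(CA^\beta)^{2(k-1)}$ uniformly at every step.
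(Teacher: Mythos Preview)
Your approach is essentially the paper's: the same integral identity obtained from \eqref{laplap} together with $dP(\nu)=kHP+dP(Z)$, the same reduction to degree $k-1$ via the components of $\nabla^0 P\in\mathcal{H}^{k-1}(\R^{n+1})$ (the paper frames this as a recursion on $B_k:=\sup_{P\neq 0}\bigl|\|H\|_2^{2k}\normdfonc{P}^2-\normdpoly{P}^2\bigr|/\normdpoly{P}^2$ with base case $k=1$, where $|\nabla^0 P|$ is constant, but this is purely cosmetic), and the same remainder estimates via Lemmas~\ref{Pcarre} and~\ref{estim}. One small caveat: Remark~\ref{majorx} already assumes the pinching $(P_{p,\varepsilon})$, which Lemma~\ref{Ppresquortho} does not --- the paper invokes only Theorem~\ref{diambound} at the closing step, so you should drop that reference.
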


\begin{proof} For any $P\in\hkm$ we have
\begin{align*}\|\nabla^0 P\|_2^2&=\|dP(\nu)\|_2^2+\|dP\|_2^2\\
&=\normdfonc{dP(Z)}^2+k^2\|HP\|_2^2+\frac{1}{\Vol M}\insm\bigl(2k H dP(Z)P+P\Delta P\bigr)\vol
\end{align*}
and from \eqref{laplap} we get
\begin{align*}\|\nabla^0 P\|_2^2=&\normdfonc{dP(Z)}^2+\frac{1}{\Vol M}\insm\bigl(P\nabla^0 dP(Z,Z)+(n+4k-2)H dP(Z)P\bigr)\vol\\
&+(\muks+k^2)\|HP\|_2^2\\
=&\frac{1}{\Vol M}\insm\Bigl((\muks+k^2)\bigl(H^2-\|H\|_2^2\bigr)P^2+(n+4k-2)HdP(Z)P\Bigr)\vol\\
&+\frac{1}{\Vol M}\insm P\nabla^0 dP(Z,Z)\vol+(\muks+k^2)\|H\|_2^2\|P\|_2^2+\normdfonc{dP(Z)}^2
\end{align*}
Now we have
\begin{align}\label{normgrad0}\normdpoly{\nabla^0 P}^2=\normdpoly{\nabla^{\S^n}P}^2+k^2\normdpoly{ P}^2=(\muks+k^2)\normdpoly{P}^2\end{align}
Hence 
$$\displaylines{
\|H\|_2^{2k-2}\normdfonc{\nabla^0 P}^2-\normdpoly{\nabla^0 P}^2=(\muks+k^2)\bigl(\|H\|_2^{2k}\normdfonc{P}^2-\normdpoly{P}^2\bigr)+\|H\|_2^{2k-2}\normdfonc{dP(Z)}^2\hfill\cr
+\frac{\|H\|_2^{2k-2}}{\Vol M}\insm P\Bigl((\muks+k^2)\bigl(H^2-\|H\|_2^2\bigr)P +H (n+4k-2)dP(Z)+\nabla^0 dP(Z,Z)\bigr)\vol\hfill}$$
Which gives
\begin{align}\label{intermediaire}\Bigl|\|H\|_2^{2k}\normdfonc{P}^2&-\normdpoly{P}^2\Bigr|\leqslant\frac{1}{\muks+k^2}\Bigl|\|H\|_2^{2k-2}\normdfonc{\nabla^0 P}^2-\normdpoly{\nabla^0 P}^2\Bigr|\\
&+\frac{\|H\|_2^{2k-2}}{\muks+k^2}\Bigl( (n+4k-2)\bigl|\scalfonc{(HZ)_k^{\star}P}{P}\bigr|+\normdfonc{Z_k^{\star}(P)}^2+\bigl|\scalfonc{(Z,Z)_k^{\star}P}{P}\bigr|\Bigr)\notag\\
&+\frac{\|H\|_2^{2k-2}}{\Vol M}\insm\bigl|H^2-\|H\|_2^{2}\bigr|P^2\vol\notag
\notag\end{align}
Note that, by Lemma \ref{Pcarre} and Remark \ref{remark}, we have
\begin{align*}\frac{1}{\Vol M}\insm \bigl| H^2-\|H\|_2^{2}\bigr|P^2\vol&\leqslant\frac{\normdpoly{P}^2}{\Vol M}\insm\bigl| H^2-\|H\|_2^{2}\bigr|\sum_{i=1}^{m_k}P_i^2\vol\\
&=\frac{m_k\normdpoly{P}^2}{\Vol M}\insm\bigl| H^2-\|H\|_2^{2}\bigr||X|^{2k}\vol\\
&\leqslant m_k \|X\|_{\infty}^{2k}\|H^2-\|H\|_2^2\|_1\normdpoly{P}^2
\end{align*}
which, combined with \eqref{intermediaire}, gives 
\begin{align*}&\Bigl|\|H\|_2^{2k}\normdfonc{P}^2-\normdpoly{P}^2\Bigr|\leqslant\frac{1}{\muks+k^2}\Bigl|\|H\|_2^{2k-2}\normdfonc{\nabla^0 P}^2-\normdpoly{\nabla^0 P}^2\Bigr|\\
&+m_k\|H\|_2^{2k-2}\|X\|_{\infty}^{2k}\|H^2-\|H\|_2^2\|_1\normdpoly{P}^2\\
&+\frac{\|H\|_2^{2k-2}\normdpoly{P}}{\muks+k^2}\Bigl((n+4k-2)\normdappli{(HZ)_k^{\star}}\normdfonc{P}+\normdappli{Z_k^{\star}}^2\normdpoly{P}+\normdappli{(Z,Z)_k^{\star}}\normdfonc{P}\Bigr)
\end{align*}
Now, as above, we have
\begin{align}\label{majorpfoncppoly}\normdfonc{P}\leqslant\sqrt{\sum_i\|P_i\|_2^2}\normdpoly{P} \leqslant \sqrt{\frac{m_k}{\Vol M}\int_M|X|^{2k}\vol}\normdpoly{P}\end{align}
and from Lemma \ref{estim}, we get

\begin{align*}\frac{\|H\|_2^{2k-2}\normdpoly{P}}{\muks+k^2}\Bigl((n+4k-2)\normdappli{(HZ)_k^{\star}}\normdfonc{P}+\normdappli{Z_k^{\star}}^2\normdpoly{P}+\normdappli{(Z,Z)_k^{\star}}\normdfonc{P}\Bigr)\\
\leqslant C(n)m_k(\|H\|_2\|X\|_{\infty})^{2k-2}(\|X\|_{\infty}\|HZ\|_2+\|Z\|_2^2+\|Z\|_4^2)\|P\|_{\S^n}^2
\end{align*}
and
\begin{align*}\Bigl|\|H\|_2^{2k}\normdfonc{P}^2-\normdpoly{P}^2\Bigr|\leqslant&\frac{1}{\muks+k^2}\Bigl|\|H\|_2^{2k-2}\normdfonc{\nabla^0 P}^2-\normdpoly{\nabla^0 P}^2\Bigr|\\
&+\bigl(\|X\|_\infty\|H\|_2\bigr)^{2k-2}m_kC(n)D\normdpoly{P}^2\end{align*}
with
$$\displaylines{D:=\left(\bigl\|H^2-\|H\|_2^{2}\bigr\|_1\|X\|^{2}_\infty+\bigl\|HZ\bigr\|_2\|X\|_\infty+\|Z\|_2^2+\bigl\|Z\bigr\|_4^2\right)
}$$
In particular for $k=1$, we have $|\nabla^0 P|$ constant and so
\begin{align*}\bigl|\|H\|_2^{2}\normdfonc{P}^2-\normdpoly{P}^2\bigr|&\leqslant m_1C(n)D\normdpoly{P}^2
\end{align*}
Let $B_k=\sup\Bigl\{\frac{|\|H\|_2^{2k}\normdfonc{P}^2-\normdpoly{P}^2|}{\normdpoly{P}^2} \ \mid \ P\in\hkr\setminus\{0\}\Bigr\}$. Then using that $\nabla^0P\in{\mathcal H}^{k-1}(\R^{n+1})$ and \eqref{normgrad0}, we get for $1\leqslant i\leqslant k$
\begin{align*}B_k\leqslant B_{k-1}+m_k\bigl(\|X\|_\infty\|H\|_2\bigr)^{2k-2}C(n)D\leqslant C(n)D\sigma_k\bigl(\|X\|_\infty\|H\|_2\bigr)^{2k-1}
\end{align*}
We conclude using Theorem \ref{diambound}.
\end{proof}

%%%%%%%%%%%%%%%%%%%%%%%%%%%%%%%%%%%%%%%%%%%%%%%%%%%%%%%%%%%%%%%%%%%%%%%%%%%%%%%%%%%%%%%%%%%%%%%%%%%%%%%%%%%%%%%%%%%%%%%%%%%%%%%%%%%%%%%%%%%%%%%%%%%%%%%%%%%%%%%%%%%%%%%%%%%%%%%%%%%%%%%%%%%%%%%%%%%%%%%%%%%%%%%%%%%%%%%%%%%%%%%%%%%%%%%%%%%%%%%%%%%

\section{Proof of Theorem \ref{maintheo}}\label{potm}

Under the assumption of Theorem \ref{maintheo} we can use Lemma \ref{nldz}, Theorem \ref{diambound} and Inequality \eqref{estiray} to improve the estimate in Lemma \ref{Ppresquortho}.
\begin{lemma}\label{Ppresquortho2}
Let $q>\max(4,n)$, $p>2$ and $A>0$ be some reals. There exist some constants $C=C(p,q,n)$, $\alpha=\alpha(q,n)$ and $\beta=\beta(q,n)$ such that for any isometrically immersed hypersurface $M$ of $\R^{n+1}$ satisfying $(P_{p,\varepsilon})$ and $\Vol M\|H\|_q^n\leqslant A$, and for any $P\in\hkm$, we have
$$\Bigl|\|H\|_2^{2k}\normdfonc{P}^2-\normdpoly{P}^2\Bigr|\leqslant \varepsilon^\alpha\sigma_k(CA^{\beta})^{2k}\normdpoly{P}^2.$$
\end{lemma}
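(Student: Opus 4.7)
The plan is to apply Lemma~\ref{Ppresquortho} directly and just improve the control on the quantity $D=\bigl\|H^2-\|H\|_2^{2}\bigr\|_1\|X\|^{2}_\infty+\bigl\|HZ\bigr\|_2\|X\|_\infty+\|Z\|_2^2+\bigl\|Z\bigr\|_4^2$. Since the factor $(CA^\beta)^{2k-1}$ is already present in Lemma~\ref{Ppresquortho}, it will suffice to prove a bound of the form $D\leqslant CA^\beta\varepsilon^{\alpha}$, which after absorption yields $\sigma_k(CA^\beta)^{2k}\varepsilon^{\alpha}$ as required.

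First I would recall the auxiliary estimates: Inequality~\eqref{estiray} gives $\|X\|_\infty\|H\|_2\leqslant 1+C\varepsilon^{\alpha}$ (in particular bounded), and the computation performed in Section~\ref{poip} to prove \eqref{pinchcm} actually produced the stronger intermediate estimate $\bigl\|H^2-\|H\|_2^2\bigr\|_1\leqslant CA^{\gamma/n}\varepsilon^{\alpha}\|H\|_2^2$. Combining these two gives $\bigl\|H^2-\|H\|_2^2\bigr\|_1\|X\|_\infty^2\leqslant CA^\beta\varepsilon^{\alpha}$. Next, by Lemma~\ref{nldz} with $r=2$ we immediately obtain $\|Z\|_2^2\leqslant CA^\beta\varepsilon^{\alpha}$, and the hypothesis $q>4$ lets us apply Lemma~\ref{nldz} with $r=4$ to get $\|Z\|_4^2\leqslant CA^\beta\varepsilon^{\alpha(q-4)/(2(q-2))}$.

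The only remaining term is $\|HZ\|_2\|X\|_\infty$. For this I would use H\"older's inequality, $\|HZ\|_2\leqslant\|H\|_q\|Z\|_{2q/(q-2)}$, noting that $2q/(q-2)<q$ precisely because $q>4$, so Lemma~\ref{nldz} applies to the $Z$-factor. For the $\|H\|_q$ factor, combining \eqref{minorA} with $\Vol M\|H\|_q^n\leqslant A$ gives $\|H\|_q\leqslant K(n)A^{1/n}\|H\|_2$; together with $\|X\|_\infty\|H\|_2\leqslant C$, this yields $\|HZ\|_2\|X\|_\infty\leqslant CA^\beta\varepsilon^{\alpha(q-4)/(2(q-2))}$. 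Taking the minimum of the various exponents of $\varepsilon$ produces a single exponent $\alpha=\alpha(q,n)>0$ and the bound $D\leqslant CA^\beta\varepsilon^{\alpha}$.

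Substituting this into Lemma~\ref{Ppresquortho} gives the announced estimate. The proof is essentially a bookkeeping assembly of earlier results; the only genuine obstacle is making sure every ingredient is available, and this is exactly where the hypothesis $q>\max(4,n)$ becomes sharp: we need $q>n$ for Theorem~\ref{diambound} (already used inside Lemma~\ref{Ppresquortho}), and we need $q>4$ both to control $\|Z\|_4$ and to have $2q/(q-2)<q$ so that $\|HZ\|_2$ can be controlled via Lemma~\ref{nldz}. No new idea seems necessary beyond the estimates already proved.
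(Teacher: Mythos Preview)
Your proposal is correct and follows exactly the approach the paper intends: the paper's ``proof'' is the single sentence that one should use Lemma~\ref{nldz}, Theorem~\ref{diambound} and Inequality~\eqref{estiray} to bound the quantity $D$ appearing in Lemma~\ref{Ppresquortho}, and you have carried out precisely that bookkeeping. Your identification of where the hypothesis $q>\max(4,n)$ enters (to make $\|Z\|_4$ and $\|HZ\|_2$ controllable via Lemma~\ref{nldz}) is also accurate.
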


This allows to prove the following estimate on $\Delta P$.
\begin{lemma}\label{almosteigenf}
Let $k$ be an integer such that $\varepsilon^\alpha\sigma_k(CA^{\beta})^{2k}\leqslant\frac{1}{2}$ and $P\in\hkm$, we have
$$\bigl\|\Delta P-\mu_k\|H\|_2^2P\bigr\|_2\leqslant \sqrt{m_k}\mu_k(CA^{\beta})^k\varepsilon^\alpha\|H\|_2^2\|P\|_2$$
\end{lemma}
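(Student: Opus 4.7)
The plan is to start from identity \eqref{laplap}, rewritten as
\begin{align*}
\Delta P - \muks\|H\|_2^2 P \;=\; \muks(H^2-\|H\|_2^2)\,P \;+\; (n+2k-2)\,H\,dP(Z) \;+\; \nabla^0 dP(Z,Z),
\end{align*}
and to bound each of the three summands separately in $L^2(M)$. To get pointwise control I would write $P=\sum_i a_i P_i$ in an orthonormal basis $(P_1,\dots,P_{m_k})$ of $\hkr$, so that $\|P\|_{\S^n}^2=\sum a_i^2$. Cauchy--Schwarz combined with Lemmas \ref{Pcarre}, \ref{grad} and \ref{Hess} yields $P^2\leqslant m_k|X|^{2k}\|P\|_{\S^n}^2$, $(dP(Z))^2\leqslant m_k k^2|X|^{2(k-1)}|Z|^2\|P\|_{\S^n}^2$ and $(\nabla^0 dP(Z,Z))^2\leqslant m_k\alpha_{n,k}|X|^{2(k-2)}|Z|^4\|P\|_{\S^n}^2$. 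After integration the three $L^2$ norms are dominated, up to absolute constants, by $\muks\sqrt{m_k}\,\|X\|_\infty^k\|P\|_{\S^n}\|H^2-\|H\|_2^2\|_2$, $\muks\sqrt{m_k}\,\|X\|_\infty^{k-1}\|P\|_{\S^n}\|HZ\|_2$, and $\sqrt{m_k\alpha_{n,k}}\,\|X\|_\infty^{k-2}\|P\|_{\S^n}\|Z\|_4^2$ respectively.

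Next I would control the three integral factors using the earlier results of the paper. Theorem \ref{diambound} and \eqref{minorA} together give $\|X\|_\infty\|H\|_2\leqslant CA^\beta$. Splitting $H^2-\|H\|_2^2=(|H|-\|H\|_2)(|H|+\|H\|_2)$, applying H\"older with exponents $\bigl(2q/(q-2),q\bigr)$, and using \eqref{pinchcm} at $r=2q/(q-2)<q$ together with $\|H\|_q\leqslant K(n)A^{1/n}\|H\|_2$ would yield $\|H^2-\|H\|_2^2\|_2\leqslant CA^\beta\varepsilon^{\alpha'}\|H\|_2^2$. H\"older also gives $\|HZ\|_2\leqslant\|H\|_q\|Z\|_{2q/(q-2)}$, and Lemma \ref{nldz} applied at $r=2q/(q-2)\in[2,q)$---valid precisely because $q>4$---would give $\|HZ\|_2\leqslant CA^\beta\varepsilon^{\alpha''}\|H\|_2$. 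Finally Lemma \ref{nldz} at $r=4$, again using $q>4$, would give $\|Z\|_4\leqslant CA^\beta\varepsilon^{\alpha'''}$.

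To close the argument, I would convert $\|P\|_{\S^n}$ into $\|P\|_2$ via Lemma \ref{Ppresquortho2}: the standing hypothesis $\varepsilon^\alpha\sigma_k(CA^\beta)^{2k}\leqslant 1/2$ yields $\|P\|_{\S^n}^2\leqslant 2\|H\|_2^{2k}\|P\|_2^2$. Inserting this together with $\|X\|_\infty^j\leqslant(CA^\beta)^j\|H\|_2^{-j}$ into each of the three summand bounds, the powers of $\|H\|_2$ and $\|X\|_\infty$ combine to leave a common factor $\|H\|_2^2\|P\|_2$, while $k^2$ and $\sqrt{\alpha_{n,k}}\leqslant C(n)k^2$ get absorbed into $\muks$. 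Each summand is then bounded by $C(n)\muks\sqrt{m_k}(CA^\beta)^k\varepsilon^{\alpha_*}\|H\|_2^2\|P\|_2$ for some $\alpha_*\in\{\alpha',\alpha'',\alpha'''\}$, and setting $\alpha=\min(\alpha',\alpha'',\alpha''')$ concludes.

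The main obstacle is the bookkeeping: the $k$-dependent constants coming from Lemmas \ref{grad} and \ref{Hess} must be tracked carefully so that the final bound is of order $\sqrt{m_k}\muks$, and the powers of $\|H\|_2$ and $\|X\|_\infty$ must cancel correctly to leave exactly $\|H\|_2^2\|P\|_2$ on the right. The hypothesis $q>\max(4,n)$ is equally crucial: it is precisely what allows simultaneous control of $\|HZ\|_2$ and $\|Z\|_4$ by positive powers of $\varepsilon$ via Lemma \ref{nldz}.
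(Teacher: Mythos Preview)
Your proposal is correct and follows essentially the same route as the paper's own proof: start from \eqref{laplap}, bound each of the three summands via the pointwise estimates coming from Lemmas \ref{Pcarre}, \ref{grad}, \ref{Hess} (the paper packages these through Remark \ref{remark} and Lemma \ref{estim}, but the content is identical to your Cauchy--Schwarz argument), then invoke \eqref{pinchcm} and Lemma \ref{nldz} for the $\varepsilon$-smallness and Lemma \ref{Ppresquortho2} to convert $\|P\|_{\S^n}$ into $\|P\|_2$. One minor remark: for the bound $\|X\|_\infty\|H\|_2\leqslant CA^\beta$ the direct route is \eqref{estiray} (or Theorem \ref{diambound} combined with $\|H\|_2\|X\|_2\leqslant 1+\varepsilon$ from $(P_{p,\varepsilon})$); the reference to \eqref{minorA} is not what you need there, though it does enter when you write $\|H\|_q\leqslant K(n)A^{1/n}\|H\|_2$.
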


\begin{proof} From Formula \eqref{laplap}, we have
$$\|\Delta P-\mu_k\|H\|_2^2P\|_2\leqslant\|\mu_k(H^2-\|H\|_2^2)P\|_2+(n+2k-2)\|HdP(Z)\|_2+\|\nabla^0dP(Z,Z)\|_2$$
If $\varepsilon^{\alpha}\sigma_k (CA^{\beta})^{2k}\leqslant\frac{1}{2}$ we deduce from Lemma \ref{Ppresquortho2} that $\normdpoly{P}^2\leqslant 2\|H\|_2^{2k}\normdfonc{P}^2$. And using Lemma \ref{Pcarre} and Inequality \eqref{estiray}, we have
$$\displaylines{\|\mu_k(H^2-\|H\|_2^2)P\|_2^2\leqslant\frac{\mu^2_km_k}{\Vol M}\|P\|^2_{\S^n}\int_M|H^2-\|H\|_2^2|^2|X|^{2k}\vol\hfill\cr
\leqslant\frac{2\mu_k^2m_k}{\Vol M}\|P\|_2^2\bigl(\|H\|_2\|X\|_\infty\bigr)^{2k}\int_M(H^2-\|H\|_2^2)^2\vol\leqslant (CA^{\beta})^{2k}\varepsilon^\alpha\mu_k^2m_k\|H\|_2^4\|P\|_2^2}$$
where the last inequality comes from Inequality \ref{pinchcm} and the H\"older Inequality.
By technical Lemma of Section \ref{Homog}, we have
$$\displaylines{
\|HdP(Z)\|_2^2\leqslant\|P\|_{\S^n}^2\normdappli{(HZ)_k^*}^2\leqslant\|P\|_{\S^n}^2\frac{m_kk^2}{\Vol M}\|X\|_\infty^{2k-2}\int_M H^2|Z|^2\vol\hfill\cr\leqslant\varepsilon^\alpha (CA^{\beta})^{2k}k^2m_k\|H\|_2^4\|P\|_{2}^2\cr
\|\nabla^0dP(Z,Z)\|_2^2\leqslant\normdappli{(Z,Z)_k^{\star}}^2\|P\|_{\S^n}^2\leqslant m_k\alpha_{k,n}\|X\|_\infty^{2k-4}\|Z\|_4^4\|P\|_{\S^n}^2\hfill\cr
\leqslant  (CA^{\beta})^{2k}\varepsilon^\alpha m_k\alpha_{k,n}\|H\|_2^4\|P\|_{2}^2
}$$
which gives the result.
\end{proof}

Let $\nu>0$ and $E_k^\nu$ be the space spanned by the eigenfunctions of $M$ associated to an eigenvalue in the interval $\bigl[(1-\varepsilon^\alpha\sqrt{m_k}C^{k}-\nu)\|H\|_2^2\mu_k,(1+\varepsilon^\alpha\sqrt{m_k}C^{k}+\nu)\|H\|_2^2\mu_k\bigr]$. If $\dim E_k^\nu<m_k$, then there exists $P\in\hkm\setminus\{0\}$ which is $L^2$-orthogonal to $E_k^{\nu}$. Let $\displaystyle P=\sum_i f_i$ be the decomposition of $P$ in the Hilbert basis given by the eigenfunctions $f_i$ of $M$ associated respectively to $\lambda_i$. Putting $N:=\{i\ \ | \ \ f_i\notin E_k^{\nu}\}$, by assumption on $P$ we have 
$$\displaylines{(C^k\sqrt{m_k}\varepsilon^\alpha+\nu)^2\|H\|_2^4\mu_k^2\|P\|_2^2\leqslant\sum_{i\in N}\bigl(\lambda_i-\|H\|^2_2\mu_k\bigr)^2\|f_i\|_2^2=\|\Delta P-\mu_k\|H\|_2^2P\|_2^2\hfill\cr
\leqslant \mu_k^2C^{2k}m_k\|H\|_2^4\varepsilon^{2\alpha}\|P\|_2^2}$$
which gives a contradiction. We then have $\dim E_k^\nu\geqslant m_k$. We get the result by letting $\nu$ tends to $0$.

%%%%%%%%%%%%%%%%%%%%%%%%%%%%%%%%%%%%%%%%%%%%%%%%%%%%%%%%%%%%%%%%%%%%%%%%%%%%%%%%%%%%%%%%%%%%%%%%%%%%%%%%%%%%%%%%%%%%%%%%%%%%%%%%%%%%%%%%%%%%%%%%%%%%%%%%%%%%%%%%%%%%%%%%%%%%%%%%%%%%%%%%%%%%%%%%%%%%%%%%%%%%%%%%%%%%%%

\section{Proof of Inequality \ref{passepartout}}\label{poipa}

We can assume $\eta\leqslant1$ and $\|H\|_2=1$ by a homogeneity argument. Let $x\in\S^n$ and set $V^n(s)=\Vol (B(x,s)\cap \S^n)$. Let $\beta>0$ small enough so that $(1+\eta/2)V^n\bigl((1+2\beta)r\bigr)\leqslant (1+\eta)V^n(r)$ and $(1-\eta/2)V^n\bigl((1-2\beta)r\bigr)\geqslant(1-\eta) V^n(r)$. Let $f_1:\S^n\to[0,1]$ (resp. $f_2:\S^n\to[0,1]$) be a smooth function such that $f_1=1$ on $B\bigl(x,(1+\beta)r\bigr)\cap\S^n$ (resp. $f_2=1$ on $B\bigl(x,(1-2\beta)r\bigr)\cap\S^n$) and $f_1=0$ outside $B\bigl(x,(1+2\beta) r\bigr)\cap\S^n$ (resp. $f_2=0$ outside $B\bigl(x,(1-\beta) r\bigr)\cap\S^n$). There exists a family $(P^i_k)_{k\leqslant N}$ such that $P^i_k\in\hkr$ and $A=\sup_{\S^n}\bigl|f_i-\sum_{k\leqslant N}P^i_k\bigr|\leqslant \|f_i\|_{\S^n}^2\eta/18$. We extend $f_i$ to $\R^{n+1}\setminus\{0\}$ by $f_i(X)=f_i\bigl(\frac{X}{|X|}\bigr)$. Then we have
$$\displaylines{
\Bigl|\|f_i\|_2^2-\frac{1}{\Vol\S^n}\int_{\S^n}|f_i|^2\Bigr|\leqslant I_1+I_2+I_3}$$
where 
$$I_1:=\Bigl|\frac{1}{\Vol M}\int_M\Bigl(|f_i|^2-\bigl(\sum_{k\leqslant N}|X|^{-k}P^i_k\bigr)^2\Bigr)\vol\Bigr|$$
$$I_2:=\Bigl|\frac{1}{\Vol M}\int_M\bigl(\sum_{k\leqslant N}|X|^{-k}P^i_k\bigr)^2\vol-\sum_{k\leqslant N}\|P^i_k\|_{\S^n}^2\Bigr|$$
and
$$I_3:=\Bigl|\frac{1}{\Vol\S^n}\int_{\S^n}\Bigl(\bigl(\sum_{k\leqslant N}P^i_k\bigr)^2-f_i^2\Bigr)\Bigr|.$$
On $\S^n$ we have $\bigl|f_i^2-(\sum_{k\leqslant N}P^i_k)^2\bigr|\leqslant A\bigl(2\sup_{\S^n}|f_i|+A\bigr)\leqslant\|f_i\|_{\S^n}^2\eta/6$ and on $M$ we have
\begin{align*}\Bigl|f_i^2(X)-\bigl(\sum_{k\leqslant N}|X|^{-k}P^i_k(X)\bigr)^2\Bigr|&=\Bigl|f_i^2\bigl(\frac{X}{|X|}\bigr)-\Bigl(\sum_{k\leqslant N}P^i_k\bigl(\frac{X}{|X|}\bigr)\Bigr)^2\Bigr|\leqslant\|f_i\|_{\S^n}^2\eta/6\cr.
\end{align*}
Hence $I_1+I_3\leqslant\|f_i\|_{\S^n}^2\eta/3$. Now
\begin{align*}I_2&\leqslant\left|\frac{1}{\Vol M}\insm\sum_{k\leqslant N}\frac{(P_k^i)^2}{|X|^{2k}}\vol-\sum_{k\leqslant N}\|P^i_k\|_{\S^n}^2\right|+\frac{1}{\Vol M}\left|\insm\sum_{1\leqslant k\neq k'\leqslant N}\frac{P_k^i P_{k'}^i}{|X|^{k+k'}}\vol\right|\displaybreak[2]\\
&\leqslant\frac{1}{\Vol M}\insm\sum_{k\leqslant N}\left|\frac{1}{|X|^{2k}}-\|H\|_2^{2k}\right|(P_k^i)^2\vol \\
&+\frac{1}{\Vol M}\insm\sum_{1\leqslant k\neq k'\leqslant N}\left|\frac{1}{|X|^{k+k'}}-\|H\|_2^{k+k'}\right||P_k^i P_{k'}^i|\vol\displaybreak[2]\\
&+\sum_{k\leqslant N}\left|\|H\|_2^{2k}\normdfonc{P_k^i}^2-\normdpoly{P_k^i}^2\right|+\sum_{1\leqslant k\neq k'\leqslant N}\frac{\|H\|_2^{k+k'}}{\Vol M}\left|\insm P_k^i P_{k'}^i\vol\right|
\end{align*}
From (\ref{estiray}) we have $\left|\frac{1}{|X|^{k+k'}}-\|H\|_2^{k+k'}\right|\leqslant N C^N\varepsilon^{\alpha}\|H\|_2^{k+k'}$. From this and Lemma \ref{Ppresquortho2}, we have
\begin{align*}I_2\leqslant N^2C^N\varepsilon^{\alpha}\sum_{k\leqslant N}\|H\|_2^{2k}\normdfonc{P_k^i}^2+\varepsilon^{\alpha}\sum_{k\leqslant N}\sigma_k C^{2k}\normdpoly{P_k^i}^2+\sum_{1\leqslant k\neq k'\leqslant N}\frac{\|H\|_2^{k+k'}}{\Vol M}\left|\insm P_k^i P_{k'}^i\vol\right|
\end{align*}
and, by Lemma \ref{almosteigenf}, we have
$$\displaylines{\Bigl|\frac{\|H\|_2^2(\mu_k-\mu_{k'})}{\Vol M}\int_M P^i_kP^i_{k'}\vol\Bigr|\leqslant\int_M\frac{|P^i_k(\Delta P^i_{k'}-\|H\|_2^2\mu_{k'} P^i_{k'})|}{\Vol M}\vol\hfill\cr
\hfill+\int_M\frac{|P^i_{k'}(\Delta P^i_k-\|H\|_2^2\mu_k P^i_k)|}{\Vol M}\vol
\hfill\cr
\hfill\leqslant \|P^i_k\|_2\bigl\|\Delta P^i_{k'}-\|H\|_2^2\mu_{k'} P^i_{k'}\bigr\|_2+\|P^i_{k'}\|_2\bigl\|\Delta P^i_k-\|H\|_2^2\mu_k P^i_k\bigr\|_2\cr
\leqslant 2\sqrt{m_N}\mu_NC^N\varepsilon^\alpha\|H\|_2^2\|P^i_{k'}\|_2\|P^i_k\|_2}$$
under the condition $\varepsilon^\alpha\sigma_N C^{2N}\leqslant\frac{1}{2}$. Since $\mu_k-\mu_{k'}\geqslant n$ when $k\neq k'$, we have 
$$\displaylines{\Bigl|\frac{1}{\Vol M}\int_M P^i_kP^i_{k'}\vol\Bigr|\leqslant \frac{2}{n}\sqrt{m_N}\mu_NC^N\varepsilon^\alpha\|P^i_{k'}\|_2\|P^i_k\|_2}$$
hence
\begin{align*}I_2&\leqslant N^2 C^N\varepsilon^{\alpha}\sum_{k\leqslant N}\|H\|_2^{2k}\normdfonc{P_k^i}^2+\varepsilon^{\alpha}\sum_{k\leqslant N}\sigma_k C^{2k}\normdpoly{P_k^i}^2\\
&+\frac{2}{n}\sqrt{m_N}\mu_NC^N\varepsilon^\alpha\sum_{1\leqslant k\neq k'\leqslant N}\|H\|_2^{k+k'}\|P^i_{k'}\|_2\|P^i_k\|_2\\
&\leqslant D_N\varepsilon^{\alpha}\sum_{k\leqslant N}\|H\|_2^{2k}\normdfonc{P_k^i}^2+\varepsilon^{\alpha}\sum_{k\leqslant N}\sigma_k C^{2k}\normdpoly{P_k^i}^2\\
&\leqslant\varepsilon^{\alpha}\sum_{k\leqslant N}(D_N(1+\varepsilon^{\alpha}\sigma_k C^{2k})+\sigma_k C^{2k})\normdpoly{P_k^i}^2\leqslant D'_N\varepsilon^{\alpha}\end{align*}
Where we have used the fact that $\displaystyle\bigl\|\sum_{k\leqslant N}P_k^i\bigr\|_{\S^n}^2$  is bounded by a constant. We infer that if\  $\varepsilon^\alpha\leqslant\frac{V^n((1-2\beta)r)\eta}{6D'_N\Vol\S^n}\leqslant\frac{\|f_i\|_{\S^n}^2\eta}{6D'_N}$, then we have
$$\Bigl|\|f_i\|_2^2-\frac{1}{\Vol\S^n}\int_{\S^n}|f_i|^2\Bigr|\leqslant\eta\|f_i\|^2_{\S^n}/2$$
Note that $N$ depends on $r$ and $\beta$ but not on $x$ since $O(n+1)$ acts transitively on $\S^n$.
Eventually, by assumption on $f_1$ and $f_2$ and by estimate \eqref{estiray}, we have
$$\displaylines{\frac{\Vol\left( B(x,(1+\beta)r-C\varepsilon^{\alpha})\cap X(M)\right)}{\Vol M}\leqslant\|f_1\|_2^2\leqslant(1+\eta/2)\|f_1\|_{\S^n}^2\hfill\cr\hfill
\leqslant(1+\eta/2)\frac{V^n((1+2\beta) r)}{\Vol\S^n}\leqslant(1+\eta)\frac{V^n( r)}{\Vol\S^n}\cr
\frac{\Vol\left( B(x,(1-\beta)r+C\varepsilon^{\alpha})\cap X(M)\right)}{\Vol M}\geqslant\|f_2\|_2^2\geqslant(1-\eta/2)\|f_2\|_{\S^n}^2\hfill\cr\hfill
\geqslant(1-\eta/2)\frac{V^n((1-2\beta) r)}{\Vol\S^n}\geqslant(1-\eta)\frac{V^n( r)}{\Vol\S^n}}$$
And by choosing $\varepsilon^{\alpha}=\min\left(\frac{\beta r}{C},\frac{V^n((1-2\beta)r)\eta}{6D'_N\Vol\S^n)}\right)$ we get
$$\left|\frac{\Vol(B(x,r)\cap X(M))}{\Vol M}-\frac{V^n(r)}{\Vol \S^n}\right|\leqslant\eta\frac{V^n(r)}{\Vol \S^n}$$

%%%%%%%%%%%%%%%%%%%%%%%%%%%%%%%%%%%%%%%%%%%%%%%%%%%%%%%%%%%%%%%%%%%%%%%%%%%%%%%%%%%%%%%%%%%%%%%%%%%
\section{Some examples}\label{se}
We set $\ieps=[\varepsilon,\frac{\pi}{2}]$ for $\varepsilon>0$ and let $\varphi : \ieps\longrightarrow (-1,+\infty)$ be a function continuous on $\ieps$ and smooth on $(\varepsilon,\frac{\pi}{2}]$. For any $0\leqslant k\leqslant n-2$, we consider the map
$$\begin{array}{rcl}\Phi_\varphi: \S^{n-k-1}\times\S^{k}\times \ieps &\longrightarrow &\R^{n+1}=\R^{n-k}\oplus\R^{k+1}\\
x=(y,z,r) & \longmapsto &  (1+\varphi(r))\bigl((\sin r) y+(\cos r) z\bigr)\end{array}$$
which is an embedding onto a manifold  $X_\varphi\subset\R^{n+1}$.
We denote respectively by $B(\varphi)$ and $H(\varphi)$ the second fundamental form and the mean curvature of $X_\varphi$. We have 

\begin{lemma}\label{courbmoy} Let $x=(y,z,r)\in\S^{n-k-1}\times\S^{k}\times \ieps $, $q=\Phi_\varphi(x)$ and $(u,v,h)\in T_x \xeps$. Then we have
$$\displaylines{
nH_{q}(\varphi)=\bigl( \coeff\bigr)^{-3/2}\Bigl[-(1+\varphi(r))\varphi''(r)+(1+\varphi(r))^2+2\varphi'^2(r)\Bigr]\hfill\cr
\hfill+\frac{\bigl( \coeff\bigr)^{-1/2}}{1+\varphi(r)}\Bigl[ -(n-k-1)\varphi'(r)\cot r+(n-1)(1+\varphi(r))+k\varphi'(r)\tan r\Bigr]\cr
\cr
|B_{q}(\varphi)|=\hfill\cr
\hfill\frac{(1+\varphi(r))^{-1}}{\bigl(1+(\frac{\varphi'(r)}{1+\varphi(r)})^2\bigr)^{1/2}}\max\Bigl(\bigl|1-\frac{\varphi'}{1+\varphi}\cot r\bigr|,\bigl|1+\frac{\varphi'}{1+\varphi}\tan r\bigr|,\bigl|1+\frac{(\varphi')^2-(1+\varphi)\varphi''}{\coeff}\bigr|\Bigl)}$$
\end{lemma}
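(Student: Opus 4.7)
The parametrisation $\Phi_\varphi$ has a natural block structure: variations tangent to $\S^{n-k-1}$ (fixing $z,r$), variations tangent to $\S^k$ (fixing $y,r$), and variations along $\partial_r$ (fixing $y,z$). My plan is to exploit this block structure to diagonalise $B$ and then compute each principal curvature separately.

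\textbf{Step 1 (tangent frame and unit normal).} For $u\in T_y\S^{n-k-1}$ and $v\in T_z\S^k$, one directly reads off
$$d\Phi_\varphi(u)=(1+\varphi)(\sin r)u,\quad d\Phi_\varphi(v)=(1+\varphi)(\cos r)v,\quad\partial_r\Phi_\varphi=\varphi'(sy+tz)+(1+\varphi)(ty-sz),$$
with $s=\sin r$, $t=\cos r$. Since $u\in\R^{n-k}$ and $v\in\R^{k+1}$ lie in orthogonal subspaces and are both orthogonal to $y,z$, the three blocks are mutually orthogonal, and $|\partial_r\Phi_\varphi|^2=\varphi'^2+(1+\varphi)^2$. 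The unit normal must therefore lie in $\mathrm{Vect}(y,z)$ and be orthogonal to $\partial_r\Phi_\varphi$; solving the resulting $2\times 2$ linear equation gives
$$\nu=\bigl(\varphi'^2+(1+\varphi)^2\bigr)^{-1/2}\bigl[\bigl((1+\varphi)s-\varphi' t\bigr)y+\bigl(\varphi' s+(1+\varphi)t\bigr)z\bigr].$$

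\textbf{Step 2 (block-diagonality of $B$).} I will show that $\nabla^0\nu$ preserves the three blocks. For $U=d\Phi_\varphi(u)$, moving in the $u$-direction changes only $y$, so differentiating $\alpha(r)y+\beta(r)z$ yields $\nabla^0_U\nu=\alpha(r)u$, which is in the first block. Symmetrically for $V=d\Phi_\varphi(v)$. For $\partial_r\Phi_\varphi$, the derivative $\alpha'y+\beta'z$ lies in $\mathrm{Vect}(y,z)$, a $2$-plane already containing $\partial_r\Phi_\varphi$ and $\nu$, so its tangential part is along $\partial_r\Phi_\varphi$. Hence $B$ is diagonal with eigenvalues $\kappa_u$ (multiplicity $n-k-1$), $\kappa_v$ (multiplicity $k$), and $\kappa_r$ (multiplicity $1$).

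\textbf{Step 3 (principal curvatures).} Using $B(V,V)=-\langle\nabla^0_V V,\nu\rangle$: taking $y(s)=(\cos s)y+(\sin s)u$ a great circle in $\S^{n-k-1}$, we get $\nabla^0_U U=-(1+\varphi)(\sin r)y$ at $s=0$, whence
$$\kappa_u=\frac{-\langle \nabla^0_U U,\nu\rangle}{|U|^2}=\frac{1-\frac{\varphi'}{1+\varphi}\cot r}{\sqrt{\varphi'^2+(1+\varphi)^2}},$$
and symmetrically $\kappa_v=\bigl(1+\tfrac{\varphi'}{1+\varphi}\tan r\bigr)/\sqrt{\varphi'^2+(1+\varphi)^2}$. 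For the radial direction, expanding
$$\partial_r^2\Phi_\varphi=(\varphi''s+2\varphi't-(1+\varphi)s)y+(\varphi''t-2\varphi' s-(1+\varphi)t)z$$
and taking the inner product with the numerator of $\nu$, every cross term $\varphi''\varphi' st$ and $(1+\varphi)\varphi' st$ cancels by a direct trigonometric expansion, leaving $(1+\varphi)\varphi''-(1+\varphi)^2-2\varphi'^2$. Dividing by $\sqrt{\varphi'^2+(1+\varphi)^2}\cdot(\varphi'^2+(1+\varphi)^2)$ gives
$$\kappa_r=\frac{-(1+\varphi)\varphi''+(1+\varphi)^2+2\varphi'^2}{(\varphi'^2+(1+\varphi)^2)^{3/2}}=\frac{1+\frac{\varphi'^2-(1+\varphi)\varphi''}{\varphi'^2+(1+\varphi)^2}}{\sqrt{\varphi'^2+(1+\varphi)^2}}.$$

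\textbf{Step 4 (conclusion).} Adding the principal curvatures with the correct multiplicities gives
$$nH(\varphi)=(n-k-1)\kappa_u+k\kappa_v+\kappa_r,$$
which, after grouping the first two terms over the common denominator $(1+\varphi)\sqrt{\varphi'^2+(1+\varphi)^2}$, reproduces exactly the formula stated for $nH$. For the operator norm $|B|$, block-diagonality yields $|B|=\max(|\kappa_u|,|\kappa_v|,|\kappa_r|)$, and factoring out the common denominator $\sqrt{\varphi'^2+(1+\varphi)^2}=(1+\varphi)\sqrt{1+(\varphi'/(1+\varphi))^2}$ gives the announced expression. The only delicate point is the algebraic simplification in Step 3 (the radial contribution), which I expect to be the main computational obstacle; everything else is bookkeeping enabled by the block-orthogonal splitting.
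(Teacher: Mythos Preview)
Your proof is correct and follows essentially the same route as the paper. The only organizational difference is that the paper differentiates the normal $\tilde{N}$ directly to obtain the full quadratic form $B((u,v,h),(u,v,h))$ in one shot (and then observes it has no cross terms), whereas you first argue block-diagonality and then compute each principal curvature via the Gauss formula $B(V,V)=-\langle\nabla^0_V V,\nu\rangle$; the two computations are dual and lead to the same diagonal entries.
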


\begin{proof}
Let $(u,v,h)\in T_x S_{\varepsilon}$ and put $w=d(\Phi_{\varphi})_x (u,v,h)\in T_q X_{\varphi}$ where $S_{\varepsilon}=\S^{n-k-1}\times\S^{k}\times \ieps$. An easy computation shows that 
\begin{align}\label{difphi}w&=(1+\varphi(r))((\sin r) u+(\cos r) v)\notag\\
&+\varphi'(r)((\sin r) y+(\cos r) z)h+(1+\varphi(r))((\cos r)y-(\sin r)z)h\end{align}
We set 
$$\tilde{N}_q=-\varphi'(r)((\cos r) y-(\sin r) z)+(1+\varphi(r))((\sin r) y+(\cos r) z)$$
and $\displaystyle N_q=\frac{\tilde{N}_q}{\left(\coeff\right)^{1/2}}$ is a unit normal vector field on $X_{\varphi}$.
Then we have
\begin{align}\label{defsec} B_q(\varphi)(w,w)&=\left\langle\nabla_{w}^0 N,w\right\rangle\notag=\left( \coeff\right)^{-1/2}\bigl\langle\nabla_{w}^0\tilde{N},w\bigr\rangle\notag\\
&=\left( \coeff\right)^{-1/2}\Bigl\langle\sum_{i=1}^{n+1} w(\tilde{N}^i)\partial_i,w\Bigr\rangle\end{align}
where $(\partial_i)_{1\leqslant i\leqslant n+1}$ is the canonical basis of $\R^{n+1}$. A straightforward computation shows that
\begin{align*}\sum_{i=1}^{n+1}w(\tilde{N}^i)\partial_i=&-\varphi'(r)((\cos r) u-(\sin r) v)+(1+\varphi(r))((\sin r) u+(\cos r) v)\\
&-\varphi''(r)((\cos r) y-(\sin r) z) h+2\varphi'(r)((\sin r) y+(\cos r) z)h\\
&+(1+\varphi(r))((\cos r) y-(\sin r) z) h\end{align*}
Reporting this in (\ref{defsec}) and using (\ref{difphi}) we get 
$$\displaylines{B_q(\varphi)((u,v,h),(u,v,h))=\frac{1}{\sqrt{\coeff}}\Bigl[-\varphi'(r)\bigl(1+\varphi(r)\bigr)\sin r \cos r (|u|^2-|v|^2)\cr
\hfill+(1+\varphi(r))^2 (\sin^2 r |u|^2 +\cos^2 r |v|^2) -\bigl(1+\varphi(r)\bigr)\varphi''(r)h^2+2\varphi'^2(r)h^2+(1+\varphi(r))^2h^2\Bigr]}$$ 
Now let $(u_i)_{1\leqslant i\leqslant n-k-1}$ and $(v_i)_{1\leqslant i\leqslant k}$ be orthonormal bases of respectively $\S^{n-k-1}$ at $y$ and $\S^{k}$ at $z$. We set $g=\Phi_{\varphi}^{\star} can$ and $\xi=(0,0,1)$, then we have
$$\displaylines{
\hfill g(u_i,u_j)=(1+\varphi(r))^2\sin^2 r\delta_{ij},\hfill g(v_i,v_j)=(1+\varphi(r))^2\cos^2 r\delta_{ij}, \hfill g(u_i,v_j)=0,\hfill\cr
\hfill g(\xi,\xi)=\coeff,\hfill g(u_i,\xi)=g(v_j,\xi)=0.\hfill}$$
Now setting $\tilde{u}_i=d(\Phi_{\varphi})_x(u_i)$, $\tilde{v}_i=d(\Phi_{\varphi})_x(u_i)$ and $\tilde{\xi}=d(\Phi_{\varphi})_x(\xi)$, the relation above allows us to compute the trace and norm
\begin{align*}
&|B_{q}(\varphi)|=\max\Bigl(\max_{i}\frac{|B_q(\varphi)(\tilde{u}_i,\tilde{u}_i)|}{g(u_i,u_i)},\max_j\frac{|B_q(\varphi)(\tilde{v}_j,\tilde{v}_j)|}{g(v_j,v_j)},\frac{|B_q(\varphi)(\tilde{\xi},\tilde{\xi})|}{g(\xi,\xi)}\Bigl)\\
&{=}\frac{1}{\sqrt{\coeff}}\max\Bigl(\bigl|1{-}\frac{\varphi '}{1{+}\varphi}\cot r\bigr|,\bigl|1+\frac{\varphi '}{1{+}\varphi}\tan r\bigr|,\bigl|1{+}\frac{(\varphi')^2-(1+\varphi)\varphi''}{\coeff}\bigr|\Bigr)\end{align*}
of the second fundamental form. 
\end{proof}

To prove Theorem \ref{ctrexple1}, we set $a<\frac{\pi}{10}$ and define the function $\fieps$ on $I_\varepsilon$ by
$$\fieps(r)=\left\{\begin{array}{ll}f_{\varepsilon}(r)=\varepsilon\displaystyle\int_1^{\frac{r}{\varepsilon}}\frac{dt}{\sqrt{t^{2(n-k-1)}-1}} & \text{if } \varepsilon\leqslant r\leqslant a+\varepsilon,\\[3mm]
u_{\varepsilon}(r) & \text{if } r\geqslant a+\varepsilon,\\[2mm]
b_\varepsilon& \text{if }r\geqslant 2a+\varepsilon,
\end{array}\right.$$
where $u_{\varepsilon}$ is chosen such that $\fieps$ is smooth on $(\varepsilon,\frac{\pi}{2}]$ and strictly concave on $(\varepsilon,2a+\varepsilon]$, and $b_\varepsilon$ is a constant. We have $f_{\varepsilon}(a+\varepsilon)\to 0$, $f'_{\varepsilon}(a+\varepsilon)\to 0$, $f''_{\varepsilon}(a+\varepsilon)\to 0$ and so $b_{\varepsilon}\to 0$ as $\varepsilon\to0$. Hence $b_\varepsilon$ can be chosen less than $\frac{1}{2}$ and $u_\varepsilon$ can be chosen such that $\varphi_\varepsilon$ tends uniformly on $I_\varepsilon$ and $\varphi_\varepsilon'\to0$, $\varphi_\varepsilon''\to0$ uniformly on any compact of $(\varepsilon,\frac{\pi}{2}]$.

Note that $\varphi_\varepsilon$ satisfies
\begin{align}\label{equadif}\varphi_\varepsilon''=-\frac{(n-k-1)(1+\varphi_\varepsilon'^2)}{r}\varphi_\varepsilon'\ \ \mbox{ on }\ (\varepsilon,a+\varepsilon].\end{align}
Moreover we have $\fieps(\varepsilon)=0$ and $\displaystyle\lim_{t\to\varepsilon}\varphi_\varepsilon'(t)=+\infty=-\lim_{t\to\varepsilon}\varphi_\varepsilon''(t)$. Moreover, we can define on $(-b_{\varepsilon},b_{\varepsilon})$ an application $\fiteps$ so that $\fiteps(t)=\fieps^{-1}(t)$ on  $[0,b_{\varepsilon})$ and $\fiteps(-t)=\fiteps(t)$.

Now let us consider the two applications $\Phi_{\varphi_\varepsilon}$ and $\Phi_{-\varphi_\varepsilon}$ defined as above, and put $\mpeps=X_{\varphi_\varepsilon}$ and  $\mmeps=X_{-\varphi_\varepsilon}$. Since $\fiteps$ satisfies the equation $yy''=(n-k-1)\bigl(1+(y')^2\bigr)$ with initial data $\fiteps(0)=\varepsilon$ and $\fiteps'(0)=0$, it is smooth at $0$, hence on $(-b_\varepsilon,b_\varepsilon)$, and so $\meps^k=\mpeps\cup\mmeps$ is a smooth submanifold of $\R^{n+1}$. Indeed, the function $F_{\varepsilon}(p_1,p_2)= |p_1|^2-|p|^2\sin^2\bigl(\fiteps(|p|-1)\bigr)$, defined on 
$$U=\{p=(p_1,p_2)\in\R^{n-k}\oplus\R^{k+1}/\,p_1\neq 0,\, p_2\neq 0,\, -b_{\varepsilon}+1<|p|<b_{\varepsilon}+1\}$$ is a smooth, local equation of $M^k_\varepsilon$ at the neighborhood of $\mpeps\cap\mmeps$ which satisfies
$$\nabla F_{\varepsilon}(p_1,p_2)=2p_1\cos^2\varepsilon-2p_2\sin^2\varepsilon\neq 0$$
on  $\mpeps\cap\mmeps$.

We denote respectively by $\Heps$ and $\beps$, the mean curvature and the second fundamental form of $\meps^k$. 

\begin{theorem}\label{ctrexple2} $\|\Heps\|_{\infty}^{~}$ and $\|B_\varepsilon\|_{n-k}^{~}$ remain bounded whereas $\|\Heps-1\|_1\to 0$ and $\bigl\||X|-1\bigr\|_\infty\to0$ when $\varepsilon\to 0$.
\end{theorem}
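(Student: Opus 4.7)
The plan is to decompose $\meps^k=\mpeps\cup\mmeps$ into three symmetric regions following the three pieces in the definition of $\fieps$: the \emph{neck} $N_\varepsilon^\pm=\Phi_{\pm\fieps}(\S^{n-k-1}{\times}\S^k{\times}[\varepsilon,a{+}\varepsilon])$, the \emph{transition} $T_\varepsilon^\pm$ (where $\fieps=\pm u_\varepsilon$), and the \emph{bulk} $\widehat B_\varepsilon^\pm$ (where $\fieps\equiv\pm b_\varepsilon$). Since $y\in\S^{n-k-1}\subset\R^{n-k}$ and $z\in\S^k\subset\R^{k+1}$ lie in orthogonal factors, $(\sin r) y+(\cos r) z$ is a unit vector, so $|X|=1\pm\fieps$ on $M_\varepsilon^\pm$, and the uniform convergence $\fieps\to 0$ on $\ieps$ immediately yields $\bigl\||X|-1\bigr\|_\infty\to 0$. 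On $\widehat B_\varepsilon^\pm$ Lemma \ref{courbmoy} gives $nH\equiv n/(1\pm b_\varepsilon)\to n$ and $|B|\equiv(1\pm b_\varepsilon)^{-1}\to 1$ uniformly. On $T_\varepsilon^\pm$ the uniform convergence $\fieps,\fieps',\fieps''\to 0$ plugged into Lemma \ref{courbmoy} gives the same conclusions uniformly.

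The substantive work is on the neck, where $\fieps'(r)\to+\infty$ and $\fieps''(r)\to-\infty$ as $r\to\varepsilon^+$. The key is to exploit the identity $\fieps''=-(n{-}k{-}1)(1{+}\fieps'^2)\fieps'/r$ by substituting it in the first summand of Lemma \ref{courbmoy}'s formula for $nH$ and combining with the $(n{-}k{-}1)$-term from the second summand: each individually blows up like $1/r\leqslant 1/\varepsilon$, but when combined they read
\[
\frac{(n{-}k{-}1)\fieps'}{\sqrt{\fieps'^2+(1+\fieps)^2}}\cdot\frac{(1{+}\fieps)^2(1{-}r\cot r)+\fieps'^2\bigl((1{+}\fieps)^2-r\cot r\bigr)}{r(1{+}\fieps)(\fieps'^2+(1+\fieps)^2)},
\]
the first factor lying in $[0,n{-}k{-}1]$. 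The elementary Taylor estimates $1-r\cot r=O(r^2)$ and $(1+\fieps)^2-r\cot r=O(r^2+\fieps)$, together with the bound $\fieps(r)\leqslant C\sqrt{\varepsilon(r-\varepsilon)}$ derived directly from the definition of $f_\varepsilon$ near $r=\varepsilon$, then show the second factor is $O(1)$ on the sub-regime $r\in[\varepsilon,2\varepsilon]$; for $r\in[2\varepsilon,a+\varepsilon]$, $\fieps'$ is directly bounded and no cancellation is needed. The remaining non-$(n{-}k{-}1)$ terms in $nH$ are manifestly bounded, so $\|H_\varepsilon\|_\infty\leqslant C(n,k)$. Using the same substitution in the formula for $|B_\varepsilon|$ in Lemma \ref{courbmoy} shows that the three principal curvatures on $N_\varepsilon^\pm$ are dominated respectively by $\cot r$, $\tan r$, and $(n{-}k{-}1)/r$, whence $|B_\varepsilon|\leqslant C/r$ on the neck.

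For the integrated estimates, observe that the induced volume element on $N_\varepsilon^\pm$ is $(1{+}\fieps)^n\sin^{n-k-1}r\cos^k r\sqrt{\fieps'^2+(1+\fieps)^2}\,dr\,dv_{\S^{n-k-1}}dv_{\S^k}$. The substitution $r=\varepsilon s$ with $\fieps'=1/\sqrt{s^{2(n-k-1)}-1}$ gives $\Vol(N_\varepsilon^\pm)\leqslant C\varepsilon^{n-k}\int_1^{a/\varepsilon}\frac{s^{n-k-1}\,ds}{\sqrt{s^{2(n-k-1)}-1}}=O(\varepsilon^{n-k-1})\to 0$, since $k\leqslant n-2$. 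Combined with $\|H_\varepsilon\|_\infty\leqslant C$ and the uniform convergence on $T_\varepsilon\cup \widehat B_\varepsilon$, this yields $\|H_\varepsilon-1\|_1\to 0$. Analogously $\int_{N_\varepsilon}|B_\varepsilon|^{n-k}\vol\leqslant C\int_1^{a/\varepsilon}\frac{ds}{s\sqrt{s^{2(n-k-1)}-1}}$, which is a bounded integral for $k\leqslant n-2$, and since $\Vol(\meps^k)$ stays bounded below (close to $\Vol(\S^n)$), $\|B_\varepsilon\|_{n-k}\leqslant C(n,k)$ follows. The main obstacle is verifying the cancellation of the $1/\varepsilon$-singularities in the two $(n{-}k{-}1)$-contributions to $H$, which crucially uses the specific form of the ODE satisfied by $\fieps$.
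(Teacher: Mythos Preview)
Your overall strategy matches the paper's: exploit the ODE $\fieps''=-(n{-}k{-}1)(1{+}\fieps'^2)\fieps'/r$ to cancel the two divergent contributions to $H$, and then control $|B_\varepsilon|$ pointwise before integrating. However, the execution has two genuine gaps.

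\textbf{The volume of your neck does not tend to $0$.} You claim
$\Vol(N_\varepsilon^\pm)\leqslant C\varepsilon^{n-k}\int_1^{a/\varepsilon}\frac{s^{n-k-1}\,ds}{\sqrt{s^{2(n-k-1)}-1}}=O(\varepsilon^{n-k-1})$, but this only accounts for the $\fieps'$ part of $\sqrt{\fieps'^2+(1{+}\fieps)^2}$. The $(1{+}\fieps)$ part contributes $\int_\varepsilon^{a+\varepsilon} r^{n-k-1}\,dr\asymp a^{n-k}$, which is a fixed positive number since $a$ is fixed. So $\Vol(N_\varepsilon^\pm)\not\to0$, and your deduction of $\|H_\varepsilon-1\|_1\to0$ collapses. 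The paper instead notes that $H_\varepsilon\to1$ pointwise away from the gluing locus (since $\fieps,\fieps',\fieps''\to0$ on every compact of $(\varepsilon,\pi/2]$) and invokes dominated convergence using the uniform bound $\|H_\varepsilon\|_\infty\leqslant C$.

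\textbf{The bound $|B_\varepsilon|\leqslant C/r$ is too coarse for the $L^{n-k}$ estimate.} With that bound and the full volume element one gets
\[
\int_{N_\varepsilon}|B_\varepsilon|^{n-k}\voleps\;\leqslant\;C\int_\varepsilon^{a+\varepsilon}\frac{1}{r}\bigl(1+\fieps'\bigr)\,dr,
\]
and the piece $\int_\varepsilon^{a+\varepsilon}r^{-1}\,dr=\ln\bigl((a{+}\varepsilon)/\varepsilon\bigr)$ diverges. Your displayed integral $\int_1^{a/\varepsilon}\frac{ds}{s\sqrt{s^{2(n-k-1)}-1}}$ again only captures the $\fieps'$ contribution and drops this divergent term. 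The paper's remedy is to use the sharper pointwise bound $|B_\varepsilon|\leqslant C\bigl(1+h_\varepsilon/r\bigr)$ with $h_\varepsilon=\min(1,|\fieps'|)$; away from $r\approx\varepsilon$ one has $h_\varepsilon=\fieps'\leqslant C(\varepsilon/r)^{n-k-1}$, so the dangerous $1/r$ is damped and the resulting integral $\int r^{-1}(r+h_\varepsilon)^{n-k}(1+\fieps')\,dr$ stays bounded after splitting at $r\asymp\varepsilon$.

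A smaller point: your claim that on $[2\varepsilon,a{+}\varepsilon]$ ``$\fieps'$ is directly bounded and no cancellation is needed'' is also wrong. At $r=2\varepsilon$ the two individual $(n{-}k{-}1)$-terms are each of order $1/\varepsilon$; the cancellation is needed on the whole neck. Your own combined expression actually handles this once you note that $\fieps/r$ is bounded on all of $[\varepsilon,a{+}\varepsilon]$ (your estimate $\fieps\leqslant C\sqrt{\varepsilon(r-\varepsilon)}$ does give this globally), so just drop the artificial split.
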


\begin{remark}
We have $\|B_\varepsilon\|_q\to\infty$ when $\varepsilon \to 0$, for any $q>n-k$.
\end{remark}

\begin{proof} From the lemma \ref{courbmoy} and the definition of $\fieps$, $H_\varepsilon$ and $|B_\varepsilon|$ converge uniformly to $1$ on any compact of $M_\varepsilon^k\setminus M_\varepsilon^+\cap M_\varepsilon^-$. On the neighborhood of $ M_\varepsilon^+\cap M_\varepsilon^-$, we have $n(\Heps)_x=n\heps(r)$ and $n \heps\leqslant\hepsi+\hepsd+\hepst$, where
$$\displaylines{
\hepsd(r)=k\frac{(\fieps'^2+(1\pm\fieps)^2)^{-1/2}}{1\pm\fieps}\fieps'\tan(r)\leqslant\frac{k}{1-b_\varepsilon}\tan\frac{\pi}{5}\hfill\cr
\hepst(r)=(n-1)(\fieps'^2+(1\pm\fieps)^2)^{-1/2}\hfill\cr
\hfill
+(\fieps'^2+(1\pm\fieps)^2)^{-3/2}((1\pm\fieps)^2+2\fieps'^2)\leqslant \frac{n+1}{1-b_\varepsilon}\hfill
}$$
and by differential Equation \eqref{equadif} we have
$$\displaylines{\hepsi(r)=\Bigl|(n-k-1)\frac{(\fieps'^2+(1\pm\fieps)^2)^{-1/2}}{1\pm\fieps}\fieps'\cot(r)+(\fieps'^2+(1\pm\fieps)^2)^{-3/2}(1\pm\fieps)\fieps''\Bigr|\cr
\leqslant(n-k-1)\frac{(\fieps'^2+(1\pm\fieps)^2)^{-1/2}}{1\pm\fieps}\fieps'\Bigl|\cot(r)-\frac{1}{r}\Bigr|\hfill\cr
+\frac{n-k-1}{r}\Bigl|\frac{(\fieps'^2+(1\pm\fieps)^2)^{-1/2}}{1\pm\fieps}\fieps'-(\fieps'^2+(1\pm\fieps)^2)^{-3/2} (1\pm\fieps)(1+\fieps'^2)\fieps'\Bigr|\cr
\leqslant\frac{n}{1-b_\varepsilon}\Bigl(\frac{1}{r}-\cot(r)\Bigr)\hfill\cr
+\frac{n\left(\fieps'^2+(1\pm\fieps)^2\right)^{-3/2}}{r(1\pm\fieps)}\fieps'\Bigl|\fieps'^2+(1\pm\fieps)^2-(1\pm\fieps)^2(1+\fieps'^2)\Bigr|\cr
\leqslant \frac{n}{1-b_\varepsilon}\Bigl(\frac{1}{r}-\cot(r)\Bigr)+\frac{n}{r}\fieps\frac{2\pm\fieps}{1\pm\fieps}\frac{\fieps'^3}{[\fieps'^2+(1\pm\fieps)^2]^{3/2}}\hfill\cr
\leqslant \frac{n}{1-b_\varepsilon}\Bigl(\frac{1}{r}-\cot(r)\Bigr)+\frac{n}{r}\fieps\frac{2+b_\varepsilon}{1-b\varepsilon}\hfill}$$
Since $\displaystyle\frac{\fieps}{r}=\frac{\varepsilon}{r}\int_1^{r/\varepsilon}\frac{dt}{\sqrt{t^{2(n-k-1)}-1}}\leqslant\frac{r}{\varepsilon}\int_1^{r/\varepsilon}\frac{dt}{\sqrt{t^{2}-1}}$ and $\frac{1}{x}\int_1^x\frac{dt}{\sqrt{t^2-1}}\sim_{+\infty}\frac{\ln x}{x}$, we get that $\hepsi$ is bounded on $M_\varepsilon^k$, hence $H_\varepsilon$ is bounded on $M_\varepsilon$. By the Lebesgue theorem we have $\|H_\varepsilon-1\|_1\to 0$. 

We now bound $\|B_\varepsilon\|_q$ with $q=n-k$. The volume element at the neighbourhood of $M^+_\varepsilon\cap M^-_\varepsilon$ is 
\begin{equation}
  \label{vol}
  \voleps=(1\pm\fieps)^{n}(1+(\frac{\fieps'}{1\pm\fieps})^2)^{1/2}\sin^{n-k-1}(r)\cos^k(r) dv_{n-k-1}dv_k dr
\end{equation}
where $dv_{n-k-1}$ and $dv_k$ are the canonical volume element of $\S^{n-k-1}$ and $\S^k$ respectively. By Lemma \ref{courbmoy} and Equation \eqref{equadif}, we have 
$$\displaylines{|B_\varepsilon|^q\voleps=\frac{1}{{(\coef)}^\frac{q}{2}}\max\Bigl(\bigl|1-\frac{\fieps'}{1\pm\fieps}\cot r\bigr|, \bigl|1+\frac{\fieps'}{1\pm\fieps}\tan r\bigr|,\hfill\cr
\hfill\bigl|1+\frac{\fieps'^2+(n-k-1)(1\pm\fieps)(1+\fieps'^2)\fieps'/r}{\coef}\bigr|\Bigr)\Bigr]^q \voleps}$$
Noting that $\frac{x}{\sqrt{1+x^2}}\leqslant\min(1,x)$, it is easy to see that, if we set $h_\varepsilon=\min(1,|\varphi'_\varepsilon|)$ 
\begin{align*}\frac{\bigl|1-\frac{\fieps'}{1\pm\fieps}\cot r\bigr|}{\sqrt{\coef}}&\leqslant\frac{1}{\sqrt{\fieps'^2+(1\pm\fieps)^2}}+\frac{\frac{\fieps'}{1\pm\fieps}}{\sqrt{\frac{\fieps'^2}{(1\pm\fieps)^2}+1}}\frac{\cot r}{1\pm\fieps}\\
 &\leqslant\frac{1}{1-\fieps}+\frac{h_{\varepsilon}\cot r}{(1-\fieps)^2}\\
&\leqslant4\left(1+\frac{h_{\varepsilon}}{r}\right)
\end{align*}
%\frac{1+h_\varepsilon\cot r}{(1-\fieps)^2}\leqslant \frac{1}{(1-b_{\varepsilon})^2}\bigl(1+\frac{h_\varepsilon}{r}\bigr)\leqslant 2\bigl(1+\frac{h_\varepsilon}{r}\bigr)$$
Similarly for $r\in[\varepsilon, \pi/5+\varepsilon]$ and $\varepsilon$ small enough, we have
$$\frac{\bigl|1+\frac{\fieps'}{1\pm\fieps}\tan r\bigr|}{\sqrt{\coef}}\leqslant 4(1+h_\varepsilon\tan r)\leqslant 8(1+h_\varepsilon r)\leqslant 8\bigl(1+\frac{h_\varepsilon}{r}\bigr)$$
And since $\fieps'=0$ for $r\geqslant\pi/5+\varepsilon$, this inequality is also true for $r\in(\varepsilon,\pi/2]$. Moreover
\begin{align*}&\frac{1}{\sqrt{\coef}}\Bigl|1+\frac{\fieps'^2+(n-k-1)(1\pm\fieps)(1+\fieps'^2)\fieps'/r}{\coef}\Bigr|\\
&\leqslant\frac{1}{1\pm\fieps}+\frac{\fieps'^2}{(\coef)^{3/2}}+\frac{n}{r}\frac{(1\pm\fieps)(1+\fieps'^2)}{\coef}\frac{|\fieps'|}{(\coef)^{1/2}}\\
&\leqslant\frac{2}{1\pm\fieps}+\frac{nh_\varepsilon}{r(1-\fieps)}\frac{(1\pm\fieps)(1+\fieps'^2)}{\coef}\\
&\leqslant\frac{2}{1\pm\fieps}+2\frac{nh_\varepsilon}{r}\frac{(1+\fieps)^2}{(1-\fieps)^2}\\
&\leqslant 2\bigl(2+9\frac{nh_\varepsilon}{r}\bigr)
\end{align*}
It follows that
\begin{align*}
|B_{\varepsilon}|^q\voleps&\leqslant C(n,k)\bigl(1+\frac{h_\varepsilon}{r}\bigr)^q\voleps\\
&\leqslant C(n,k)(r+h_\varepsilon)^q r^{-1}\bigl(1+\frac{\fieps'}{1\pm\fieps}\bigr)dv_{n-k-1}dv_{k}dr\\
&\leqslant C(n,k)r^{-1}(r+h_\varepsilon)^q\Bigl(1+\frac{1}{\sqrt{(r/\varepsilon)^{2(n-k-1)}-1}}\Bigr)dv_{n-k-1}dv_kdr\end{align*}
Now
$$\displaylines{\int_{\meps^k}|B_\varepsilon|^q\voleps\leqslant C(n,k)\Bigl(\int_\varepsilon^{2^\frac{1}{2(n-k-1)}\varepsilon}r^{-1}\Bigl(1+\frac{1}{\sqrt{(r/\varepsilon)^{2(n-k-1)}-1}}\Bigr)dr\hfill\cr
\hfill+\int_{2^\frac{1}{2(n-k-1)}\varepsilon}^{2a+\varepsilon} r^{n-k-1}\Bigl(1+\frac{1}{r\sqrt{(r/\varepsilon)^{2(n-k-1)}-1}}\Bigr)^qdr\Bigr)\cr
\leqslant C(n,k)\Bigl(\int_1^{2^\frac{1}{2(n-k-1)}}s^{-1}\Bigl(1+\frac{1}{\sqrt{s^{2(n-k-1)}-1}}\Bigr)ds+\int_{2^\frac{1}{2(n-k-1)}}^{2a/\varepsilon+1} s^{n-k-1}\bigl(\varepsilon+\frac{1}{s^{q}}\bigr)^qds\Bigr)\cr\\}
$$
Since $\varepsilon^\frac{-1}{q}\leqslant\frac{2a}{\varepsilon}+1$ for $\varepsilon$ small enough we have
$$\displaylines{\int_{\meps^k}|B_\varepsilon|^q\voleps\leqslant C(n,k)\Bigl(1+\int_{2^\frac{1}{2(n-k-1)}}^{\varepsilon^\frac{-1}{q}} \frac{2s^{n-k-1}}{s^{q^2}}ds+\int_{\varepsilon^\frac{-1}{q}}^{2a/\varepsilon+1} 2s^{n-k-1}\varepsilon^qds\Bigr)\cr
\leqslant C(n,k)\bigl(1+\varepsilon^{n-k-1}\bigr)\\}$$
which remains bounded when $\varepsilon\to0$.
\end{proof}

Since $\varphi_\varepsilon$ is constant outside a neighborhood of $M_\varepsilon^+\cap M_\varepsilon^-$ (given by $a$), $M^k_\varepsilon$ is a smooth submanifold diffeomorphic to the sum of two spheres $\S^n$ along a (great) subsphere $\S^k\subset\S^n$.
\begin{center}
\includegraphics[width=1.5cm]{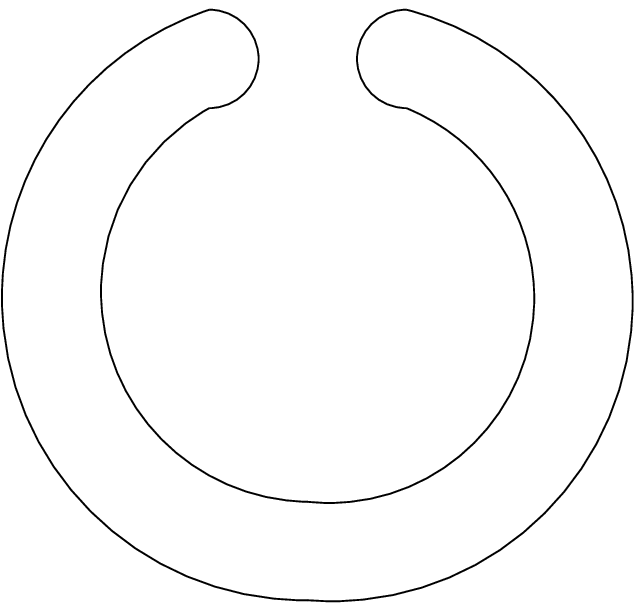}
\end{center}
If we denote $\tilde{M}^k_\varepsilon$ one connected component of the points of $M^k_\varepsilon$ corresponding to $r\leqslant 3a$, we get some pieces of hypersurfaces
\begin{center}
\includegraphics[width=1cm]{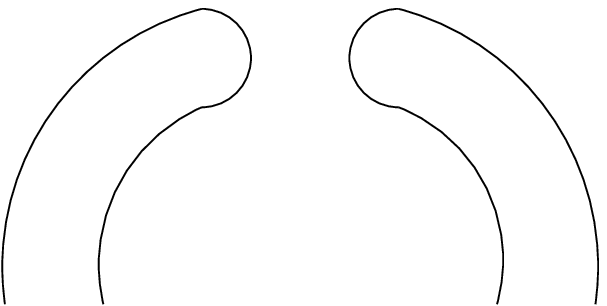}
\end{center}
that can be glued together along pieces of spheres of constant curvature to get a smooth submanifold $M_\varepsilon$, diffeomorphic to $p$ spheres $\S^n$ glued each other along $l$ subspheres $S_i$, and with curvature satisfying the bounds of Theorem \ref{ctrexple1} (when all the subspheres have dimension $0$) or of Remark \ref{rem}.
\begin{center}
\includegraphics[width=2cm]{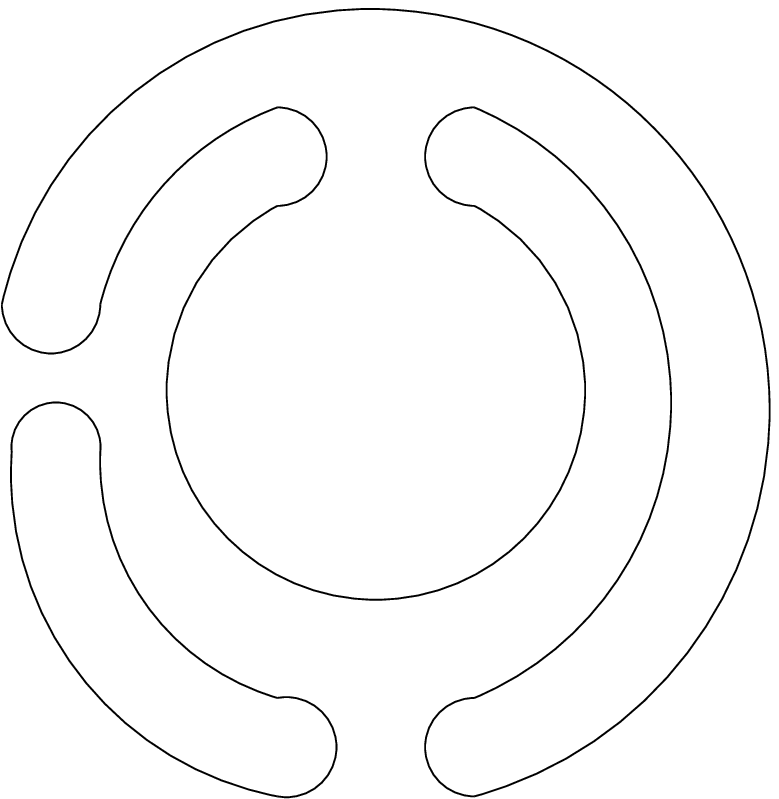}
\end{center}
Since the surgeries are performed along subsets of capacity zero, the manifold constructed have a spectrum close to the spectrum of $p$ disjoints spheres of radius close to $1$ (i.e. close to the spectrum of the standard $\S^n$ with all multiplicities multiplied by $p$).
More precisely, we set $\eta\in[2\varepsilon,\frac{\pi}{20}]$, and for any subsphere $S_i$, we set $N_{i,\eta,\varepsilon}$ the tubular neighborhood of radius $\eta$ of the submanifold $\tilde{S}_i=M_{\varepsilon,i}^+\cap M_{\varepsilon,i}^-$ in the local parametrization of $M_\varepsilon$ given by the map $\Phi_{\varphi_{\varepsilon,i}}$ associated to the subsphere $S_i$. We have $\meps=\Omega_{1,\eta,\varepsilon}\cup\cdots\cup \Omega_{p,\eta,\varepsilon}\cup N_{1,\eta, \varepsilon}\cup \cdots\cup N_{l,\eta,\varepsilon}$ where $\Omega_{i,\eta,\varepsilon}$ are the connected component of $M\setminus\cup_i N_{i,\eta,\varepsilon}$. The $\Omega_{i,\eta,\varepsilon}$ are diffeomorphic to some $S_{i,\eta}$ (which does not depend on $\varepsilon$ and $\eta$) open set of $\S^n$ which are complements of neighborhoods of subspheres of dimension less than $n-2$ and radius $\eta$, endowed with metrics which converge in $\mathcal{ C}^1$ topology to standard metrics of curvature $1$ on $S_{i,\eta}$. Indeed, $\varphi_\varepsilon$ converge to 0 in topology $\mathcal{ C}^2$ on $[r^{i,\pm}_{\varepsilon,\eta},\frac{\pi}{2}]$, where $\int_\varepsilon^{r_{\varepsilon,\eta}^{i,\pm}}\sqrt{(1\pm\varphi_{\varepsilon,i})^2+(\varphi_{\varepsilon,i}')^2}=\eta$ since it converges in $\mathcal{ C}^1$ topology on any compact of $[\varepsilon,\frac{\pi}{2}]$ and since we have
$$\displaylines{\eta\geqslant\int_\varepsilon^{r^{i,\pm}_{\varepsilon,\eta}}(1-b_{i,\varepsilon})\,dt=(r^{i,\pm}_{\varepsilon,\eta}-\varepsilon)(1-b_{i,\varepsilon})
\cr
\eta\leqslant\int_\varepsilon^{r^{i,\pm}_{\varepsilon,\eta}}(1+b_{i,\varepsilon})\,dt+\int_\varepsilon^{r^{i,\pm}_{\varepsilon,\eta}}\frac{dt}{\sqrt{(\frac{t}{\varepsilon})^{2(n-k-1)}-1}}=(r^{i,\pm}_{\varepsilon,\eta}-\varepsilon)(1+b_{i,\varepsilon})\cr\hspace{7cm}+\varepsilon\int_1^{+\infty}\frac{dt}{\sqrt{t^{2(n-k-1)}-1}}}$$
so $r_{\varepsilon,\eta}^\pm\to\eta$ when $\varepsilon\to 0$.
 So the spectrum of $\cup_i\Omega_{i,\eta,\varepsilon}\subset M_\varepsilon$ for the Dirichlet problem converges to the spectrum of $\amalg_i S_{i,\eta}\subset \amalg_i\S^n$ for the Dirichlet problem as $\varepsilon$ tends to $0$ (by the min-max principle). Since any subsphere of codimension at least $2$ has zero capacity in $\S^n$, we have that  the spectrum of $\amalg_i S_{i,\eta}\subset\amalg_i \S^n$ for the Dirichlet problem converges to the spectrum of $\amalg_i\S^n$ when $\eta$ tends to $0$ (see for instance \cite{Co} or adapt what follows). Since the spectrum of $\amalg_i\S^n$ is the spectrum of $\S^n$ with all multiplicities multiplied by $p$, by diagonal extraction we infer the existence of two sequences $(\varepsilon_m)$ and $(\eta_m)$ such that $\varepsilon_m\to0$, $\eta_m\to 0$ and  the spectrum of $\cup_i\Omega_{i,\eta_m,\varepsilon_m}\subset M_{\varepsilon_m}$ for the Dirichlet problem converges to the spectrum of $\S^n$ with all multiplicities multiplied by $p$.

Finally, note that $\lambda_{\sigma}(M_\varepsilon)\leqslant\lambda_\sigma(\cup_i\Omega_{i,2\eta,\varepsilon})$ for any $\sigma$ by the Dirichlet principle. On the other hand, by using functions of the distance to the $\tilde{S}_i$ we can easily construct on $M_\varepsilon$ a function $\psi_\varepsilon$ with value in $[0,1]$, support in $\cup_i\Omega_{i,\eta,\varepsilon}$, equal to $1$ on $\cup_i\Omega_{i,2\eta,\varepsilon}$ and whose gradient satisfies $|d \psi_\varepsilon|_{g_\varepsilon}^{~}\leqslant\frac{2}{\eta}$. It readily follows that
$$\|1-\psi_\varepsilon^2\|_1^{~}+\|d\psi_\varepsilon\|^{2}_2\leqslant(1+\frac{4}{\eta^2})\sum_i\frac{\Vol N_{i,2\eta,\varepsilon}}{\Vol M_\varepsilon}$$
To estimate $\sum_i\Vol N_{i,2\eta,\varepsilon}$, note that $N_{i,2\eta,\varepsilon}$ corresponds to the set of points with $r^{i,\pm}\leqslant r^{i,\pm}_{\varepsilon,2\eta}$ in the parametrization of $M_\varepsilon$ given by $\Phi_{\varphi_{\varepsilon,i}}$ at the neighborhood of $\tilde{S}_i$, where, as above, $r^{i,\pm}_{\varepsilon,2\eta}$ is given by
$$\int_\varepsilon^{r^{i,\pm}_{\varepsilon,2\eta}}\sqrt{(1\pm\varphi_{\epsilon,i})^2+(\varphi_{\epsilon,i}')^2}=2\eta$$
hence satisfies $\frac{1}{2}(r^{i,\pm}_{\varepsilon,2\eta}-\varepsilon)\leqslant 2\eta$ (since we have $1-\varphi_{\varepsilon,i}\geqslant\frac{1}{2}$).
By formula \ref{vol}, we have 
$$\displaylines{\Vol N_{i,2\eta,\varepsilon}\leqslant C(n)\int_\varepsilon^{r^-_\eta}(1-\varphi_{\varepsilon,i})^{n-1}\sqrt{(1-\varphi_{\varepsilon,i})^2+(\varphi_{\varepsilon,i}')^2}t^{n-k-1}dt\hfill\cr
\hfill+ C(n)\int_\varepsilon^{r^+_\eta}(1+\varphi_{\varepsilon,i})^{n-1}\sqrt{(1+\varphi_{\varepsilon,i})^2+(\varphi_{\varepsilon,i}')^2}t^{n-k-1}dt\hfill\cr
\hfill\leqslant C(n)(4\eta+\varepsilon)^{n-k-1}\eta\leqslant C(n,k)\eta^{n-k}}$$
where we have used that $\varphi_{\varepsilon,i}\leqslant2$ and $2\varepsilon\leqslant\eta$. We then have
$$\|1-\psi_\varepsilon^2\|_1^{~}+\|d\psi_\varepsilon\|_2^2\leqslant C(n,k,l,p)\eta^{n-k}$$
To end the proof of the fact that $M_{\varepsilon_m}$ has a spectrum close to that of $\cup_i\Omega_{i,\eta_m,\varepsilon_m}$ we need the following proposition, whose proof is a classical Moser iteration (we use the Sobolev Inequality \ref{simon}).
\begin{proposition}\label{norminffoncprop}  For any $q>n$ there exists a constant $C(q,n)$ so that if $(M^n,g)$ is any Riemannian manifold isometrically immersed in $\R^{n+1}$ and $E_N=\langle f_0,\cdots,f_N\rangle$is the space spanned by the eigenfunctions associated to $\lambda_0\leqslant\cdots\leqslant\lambda_N$, then for any $f\in E_N$ we have $$\|f\|_{\infty}\leqslant C(q,n)\left((\Vol M)^{1/n}(\lambda_N^{1/2}+\|H\|_q)\right)^{\gamma}\|f\|_2$$
where $\gamma=\frac{1}{2}\frac{qn}{q-n}$.  
\end{proposition}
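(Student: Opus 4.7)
The plan is to establish the $L^\infty$ bound by a Moser iteration built on two ingredients: the Michael-Simon Sobolev inequality \eqref{simon}, and the spectral $L^2$ control $\|\Delta f\|_2\leq\lambda_N\|f\|_2$, which is immediate since $\Delta$ preserves $E_N$ and acts there with eigenvalues bounded by $\lambda_N$ (in the $L^2$-orthonormal basis $(f_i)_{i\leq N}$). By the standard fact that $|d|f||\leq|df|$ almost everywhere, I may assume throughout that $f\geq 0$.

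The first ingredient is a Caccioppoli-type estimate. For $\beta\geq 1$, using $\phi=f^{2\beta-1}$ as a test function, integration by parts gives
\[
(2\beta-1)\int_M f^{2\beta-2}|df|^2\,\vol=\int_M f^{2\beta-1}\Delta f\,\vol,
\]
and Cauchy-Schwarz combined with the spectral bound yields
\[
\|df^\beta\|_2^2\leq\frac{\beta^2\lambda_N}{2\beta-1}\,\|f\|_{4\beta-2}^{2\beta-1}\|f\|_2.
\]
The second ingredient is to apply \eqref{simon} to $w=f^{2\beta}$: bounding $\|dw\|_1=2\|f^\beta|df^\beta|\|_1\leq 2\|f^\beta\|_2\|df^\beta\|_2$ by Cauchy-Schwarz and $\|Hw\|_1\leq\|H\|_q\|f\|_{2\beta q/(q-1)}^{2\beta}$ by H\"older, and feeding in the Caccioppoli estimate, produces a recursive inequality of the form
\[
\|f\|_{2\beta n/(n-1)}^{2\beta}\leq A_\beta\bigl(\|f\|_{4\beta-2}^{2\beta}+\|f\|_{2\beta q/(q-1)}^{2\beta}\bigr),
\]
with $A_\beta$ a power of $K(n)(\Vol M)^{1/n}\bigl(\beta\sqrt{\lambda_N}+\|H\|_q\bigr)$.

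The iteration then proceeds in the classical Moser style: one chooses $p_0=2$ and a sequence $(\beta_k)_{k\geq 0}$ so that the exponents appearing on the right at step $k$ are dominated by $p_k$, and one sets $p_{k+1}=2\beta_k n/(n-1)$. The strict inequality $q>n$ (equivalently $q/(q-1)<n/(n-1)$) leaves room to make $p_k$ increase and eventually diverge, and it also ensures that $\sum 1/p_k$ converges. Taking $k\to\infty$ replaces the left-hand side by $\|f\|_\infty$, while the infinite product of the constants $A_{\beta_k}^{1/p_{k+1}}$ collapses into a single factor of the form $C(q,n)\bigl((\Vol M)^{1/n}(\sqrt{\lambda_N}+\|H\|_q)\bigr)^\gamma$, with the exponent $\gamma=\tfrac{qn}{2(q-n)}$ emerging as the sum of the resulting geometric series.

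The main obstacle is the mixed-norm structure of the Caccioppoli estimate. For a single eigenfunction one has $\int f^{2\beta-1}\Delta f=\lambda_j\|f\|_{2\beta}^{2\beta}\Vol M$, giving a clean $L^{2\beta}$ bound; here, because the spectral control is only at the $L^2$ level, the right-hand side involves $\|f\|_{4\beta-2}^{2\beta-1}\|f\|_2$ rather than a pure $L^{2\beta}$ norm. Arranging the iteration so that the gain from the Sobolev factor $n/(n-1)$ genuinely dominates both the mixed-norm loss and the H\"older exponent $q/(q-1)$ at every step, and tracking the accumulating constants with the correct power of $\sqrt{\lambda_N}+\|H\|_q$ to produce exactly the exponent $\gamma=qn/(2(q-n))$, is the delicate bookkeeping at the heart of the argument; this is precisely where the hypothesis $q>n$ (strict) is indispensable.
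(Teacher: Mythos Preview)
Your approach is exactly what the paper intends: it states only that the proof ``is a classical Moser iteration (we use the Sobolev Inequality \eqref{simon})'', and your outline --- Caccioppoli from the spectral bound, fed into \eqref{simon}, then iterated --- is precisely that.

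One slip worth fixing: the reduction ``I may assume $f\geq 0$'' is not legitimate as written, because replacing $f$ by $|f|$ takes you out of $E_N$ and destroys the only spectral control you have, namely $\|\Delta f\|_2\leq\lambda_N\|f\|_2$. The standard cure is to keep the original signed $f\in E_N$ in the integration-by-parts step, using the test function $\phi=f|f|^{2\beta-2}$: one still gets
\[
(2\beta-1)\int_M |f|^{2\beta-2}|df|^2\,\vol=\int_M f|f|^{2\beta-2}\,\Delta f\,\vol\leq \Vol M\,\|f\|_{4\beta-2}^{2\beta-1}\|\Delta f\|_2,
\]
and hence your Caccioppoli inequality for $|f|^\beta$. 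The Sobolev step is then applied to $w=|f|^{2\beta}$, and the inequality $|d|f||\leq|df|$ is used only there. With that adjustment the rest of your scheme goes through.
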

Since we already know that $\lambda_\sigma(M_{\varepsilon_m})\leqslant\lambda_\sigma(\cup_i\Omega_{i,\eta_m,\varepsilon_m})\to\lambda_{E(\sigma/p)}(\S^n)$ for any $\sigma$ when $m\to\infty$, we infer that for any $N$ there exists $m=m(N)$ large enough such that on $M_{\varepsilon_m}$ and for any $f\in E_N$, we have (with $q=2n$ and since $\|H\|_\infty\leqslant C(n)$)
$$\|f\|_{\infty}\leqslant C(p,N,n)\|f\|_2$$
By the previous estimates, if we set
$$L_{\varepsilon_m}:f\in E_N\mapsto\psi_{\varepsilon_m} f\in H^1_0(\cup_i\Omega_{i,\eta_m,\varepsilon_m})$$
then we have
$$\|f\|_2^2\geqslant\|L_{\varepsilon_m}(f)\|_2^2\geqslant\|f\|_2^2-\|f\|_\infty^2\|1-\psi_{\varepsilon_m}^2\|_1\geqslant\|f\|_2^2\bigl(1-C(k,l,p,N,n)\eta_m^{n-k}\bigr)$$
and 
$$\displaylines{\|dL_{\varepsilon_m}(f)\|_2^2=\frac{1}{\Vol M_{\varepsilon_m}}\int_{M_{\varepsilon_m}}|fd\psi_{\varepsilon_m}+\psi_{\varepsilon_m} df|^2\hfill\cr
\leqslant(1+h)\|df\|^2_2+(1+\frac{1}{h})\frac{1}{\Vol M_{\varepsilon_m}}\int_{M_{\varepsilon_m}}f^2|d\psi_{\varepsilon_m}|^2\cr
\hfill\leqslant(1+h)\|df\|^2_2+(1+\frac{1}{h})C(k,l,p,N,n)\|f\|_2^2\eta_m^{n-k}}$$
for any $h>0$. We set $h=\eta_m^\frac{n-k}{2}$. For $m=m(k,l,p,N,n)$ large enough, $L_{\varepsilon_m}:E_N\to H^1_0(\cup_i\Omega_{i,\eta_m,\varepsilon_m})$ is injective and for any $f\in E_N$, we have
$$\frac{\|dL_{\varepsilon_m}(f)\|_2^2}{\|L_{\varepsilon_m}(f)\|_2^2}\leqslant (1+C(k,l,p,N,n)\eta_m^\frac{n-k}{2})\frac{\|df\|_2^2}{\|f\|_2^2}+C(k,l,p,N,n)\eta_m^\frac{n-k}{2}$$
By the min-max principle, we infer that for any $\sigma\leqslant N$, we have
$$\lambda_\sigma(M_{\varepsilon_m})\leqslant\lambda_\sigma(\cup_i\Omega_{i,\eta_m,\varepsilon_m})\leqslant(1+C(k,l,p,N,n)\eta_m^\frac{n-k}{2})\lambda_\sigma(M_{\varepsilon_m})+C(k,l,p,N,n)\eta_m^\frac{n-k}{2}$$
Since $\lambda_\sigma(\cup_i\Omega_{i,\eta_M,\varepsilon_m})\to \lambda_{E(\sigma/p)}(\S^n)$, this gives that $\lambda_\sigma(M_{\varepsilon_m})\to \lambda_{E(\sigma/p)}(\S^n)$ for any $\sigma\leqslant N$. By diagonal extraction we get the sequence of manifolds $(M_j)$ of Theorem \ref{ctrexple1}.

To construct the sequence of Theorem \ref{ctrexple3}, we consider the sequence of embedded submanifolds $(M_j)$ of Theorem \ref{ctrexple1} for $p=2$, $k=n-2$ and $l=1$. Each element of the sequence admits a covering of degree $d$ given by $y\mapsto y^d$ in the  local charts associated to the maps $\Phi$. We endow these covering with the pulled back metrics. Arguing as above, we get that the spectrum of the new sequence converge to the spectrum of two disjoint copies of 
$$\bigl(\S^1\times\S^{n-2}\times[0,\frac{\pi}{2}],dr^2+d^2\sin^2rg_{\S^1}+\cos^2rg_{\S^{n-2}}\bigr).$$

\end{document}